\title{Higher bifurcations for polynomial skew products}
\address{ 
Universit\'e d'Orl\'eans, Institut Denis Poisson,
 UMR CNRS 7013,
45067 Orl\'eans Cedex 2,
  France }
  \email{matthieu.astorg$@$univ-orleans.fr}
\address{ 
CNRS, Univ. Lille, UMR 8524 - Laboratoire Paul Painlev\'e, F-59000 Lille, France}
  \email{fabrizio.bianchi$@$univ-lille.fr}
\newcommand{\Hh}{{\mathcal H}}
\newcommand{\Ll}{{\mathcal L}}
\newcommand{\B}{\mathbb{B}}
\def\eps{\varepsilon}
\def\lam{\lambda}
\newcommand{\mb}{\mathbb}
\newcommand{\R}{\mb R}
\newcommand{\C}{\mb C}
\newcommand{\N}{\mb N}
\newcommand{\D}{\mb D}
\newcommand{\Z}{\mb Z}
\renewcommand{\P}{\mb P}
\renewcommand{\phi}{\varphi}
\renewcommand{\epsilon}{\varepsilon}
\renewcommand{\bar}{\overline}
\renewcommand{\tilde}{\widetilde}
\newcommand{\pd}{\mathrm{Poly}(d)}
\def\({\left(}
\def\){\right)}
\newcommand{\abs}[1]{\left|#1\right|}
\newtheorem{teo*}{Theorem}
\newtheorem{teo}{Theorem}[section]
\newtheorem{defi}[teo]{Definition}
\newtheorem{cor}[teo]{Corollary}
\newtheorem{lemma}[teo]{Lemma}
\newtheorem{prop}[teo]{Proposition}
\newtheorem{remark}[teo]{Remark}
\newtheorem{claim}[teo]{Claim}
\newtheorem*{fact*}{Fact}
\DeclareMathOperator{\Lip}{Lip}
\DeclareMathOperator{\Supp}{Supp}
\DeclareMathOperator{\Bif}{Bif}
\newcommand{\res}{\mathrm{Res}}
\newcommand{\mcal}{\mathcal{M}}
\newcommand{\hcal}{\mathcal{H}}
\newcommand{\jac}{\mathrm{Jac}}
\newcommand{\tbif}{T_\mathrm{bif}}
\newcommand{\tbifz}{T_{\mathrm{bif},z}}
\newcommand{\crit}{\mathrm{Crit}}
\newcommand{\pol}{\mathrm{Poly}}
\newtheorem{lem}[teo]{Lemma}
\newcommand{\skpd}{\mathbf{Sk}(p,d)}
\DeclareMathOperator{\codim}{codim}
\DeclareMathOperator{\dist}{dist}
\begin{document}
 
\subjclass[2010]{32H50, 32U40, 37F46, 37H15}
\keywords{Holomorphic dynamics, bifurcation current, bifurcation measure, polynomial skew products}

\maketitle

\begin{abstract}
We continue our investigation of the parameter space of families
of polynomial skew products. 
Assuming that the base polynomial has a Julia set not totally disconnected and is neither a Chebyshev nor a power map, 
we prove that, near any bifurcation parameter, one can find
parameters where $k$ critical points bifurcate 
 \emph{independently}, with $k$ up to the dimension of the parameter space.
This is a striking difference with respect to
the one-dimensional case. 
 The proof is based on  a variant
of the inclination lemma, applied to the postcritical set at a
Misiurewicz parameter.
By means of an analytical criterion for the non-vanishing
of the self-intersections of the bifurcation current,
we deduce
the equality of the supports of the bifurcation current and
the bifurcation measure for  such families.
Combined with results by Dujardin and Taflin, this also implies
that the support of the bifurcation measure in these families has
non-empty interior.

As part of our proof we construct, in these families, subfamilies
of codimension 1 where the bifurcation locus has non empty interior.
This provides a new independent proof of the existence
of holomorphic families of arbitrarily large dimension whose bifurcation locus
has non empty interior. Finally, it shows that the Hausdorff dimension
of the support of the bifurcation measure is maximal at any point of its support.
\end{abstract}

\section{Introduction}

Polynomial skew products are regular polynomial endomorphisms of $\C^2$
  of the form
$f(z,w)= (p(z), q(z,w))$, for $p$ and $q$ polynomials of a given degree $d\geq 2$.
\emph{Regular} here means that the coefficient of $w^d$ in $q$ is non zero, which is equivalent to the
 extendibility
of these maps as holomorphic self-maps of $\P^2$.
Despite their specific forms, these maps already provided examples of new phenomena with respect to the 
established theory of
one-variable polynomials or rational maps, see for instance
\cite{astorg2014two,dujardin2016nonlaminar,taflin_blender}.
We started in \cite{astorg2018bifurcations} a detailed
study of the parameter space of such maps.

We will denote in what follows by $\skpd$ the family of all polynomial skew products
of a given degree $d$ over a fixed base polynomial $p$
up to affine conjugacy,
and denote by $D_d$ its dimension. 
Following \cite{bbd2015}
it is possible to divide the parameter space of the family
$\skpd$ (identified with $\C^{D_d}$)
into two dynamically defined subsets: the \emph{stability locus} and the \emph{bifurcation locus}.
The bifurcation locus coincides with the support of $dd^c L_v $, where $L_v (f)$ denotes the vertical
Lyapunov function of $f$, see
\cite{jonsson1999dynamics,astorg2018bifurcations}. We gave
in \cite{astorg2018bifurcations} a description of the bifurcation locus and current
in terms 
of natural bifurcation loci and currents associated to
the vertical fibres,  and a classification of unbounded hyperbolic components in the quadratic case.

For families of rational maps, the study of the self-intersections 
of the bifurcation current
(which are meaningful because of the continuity of its potential)
 was started in
 \cite{bassanelli2007bifurcation}, see also \cite{pham,dujardin2008distribution,dujardin2011bifurcation}.
A geometric interpretation of the support of these currents is the following:
the support of $\tbif^k:= \tbif^{\wedge k}$ is the locus where $k$  critical points bifurcate
\emph{independently}. 
Moreover,
 the 
current $\tbif^k$
 is known to equidistribute 
many kinds of dynamically defined parameters, such as maps possessing 
$k$ cycles of prescribed multipliers and 
periods tending to infinity (see, e.g., \cite{bassanelli2007bifurcation,gauthier2016equidistribution}). 
This gives rise to a natural stratification of the bifurcation locus as
\[
\Supp \tbif \supseteq \Supp \tbif^2f \supseteq \dots  \supseteq \Supp \tbif^{k_\mathrm{max}}
\]
where $k_{\mathrm{max}}$ is the dimension of the parameter space.
The inclusions above are not equalities in general, and are for instance strict when considering the family of
all polynomial or rational maps of a given degree (where $k_{\mathrm{max}}$ is equal to $d-1$  and $2d-2$, respectively). It
is worth pointing out that this stratification 
is often
compared with
an analogous stratification for the Julia sets of endomorphisms of $\P^k$ (given by the supports
of the self-intersections of the Green current, see for instance \cite{dinh2010dynamics}). 
We refer to \cite{dujardin2011bifurcation}  for a more detailed 
exposition.

In \cite{astorg2018bifurcations}, the authors have proved the first equidistribution property for the bifurcation current 
$\tbif$ in families of endomorphisms of projective spaces in any dimension, 
including polynomial skew products: for a generic $\eta \in \C$, 
the bifurcation current $\tbif$  equidistributes the polynomial
skew products with a cycle of period tending to infinity and vertical multiplier $\eta$. 
The arguments could easily be adapted to prove a similar statement for the 
bifurcation
currents $\tbif^k$:
given generic $\eta_1, \ldots,
\eta_k \in \C$, 
$k \leq k_{\mathrm{max}}$,
the bifurcation 
current $\tbif^k$
 equidistributes skew products having
 $k$
  cycles of periods tending to infinity and respective
vertical multipliers  $\eta_1, \ldots, 
\eta_k\in \C$.
It is then natural to ask whether the supports of the bifurcation currents 
still
give a natural stratification of the bifurcation locus.
  
   The goal of this paper is to 
 show that the situation in families of higher dimensional dynamical systems
  is completely different from the one-dimensional counterpart.
   Namely, we establish the following result.

\begin{teo}\label{teo_main}
Let $p$ be a polynomial with Julia set not totally disconnected,
	 which is neither conjugated to $z \mapsto z^d$
	nor to a Chebyshev polynomial.
Let $\skpd$ denote the family of polynomial skew products of degree $d\geq 2$ over the base polynomial $p$, up to affine conjugacy, 
and
let $D_d$ be its dimension.
Then
 the associated  bifurcation current $\tbif$ satisfies 
\[
\Supp \tbif \equiv \Supp \tbif^2 \equiv\dots \equiv \Supp \tbif^{D_d}.
\]
\end{teo}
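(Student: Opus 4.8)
The inclusions $\Supp\tbif^{D_d}\subseteq\cdots\subseteq\Supp\tbif^{2}\subseteq\Supp\tbif$ hold for free, since the self-intersection of a closed positive current (here $\tbif=dd^{c}L_v$, with continuous potential) is supported in the support of the current; so the content of the statement is the reverse inclusion $\Supp\tbif\subseteq\Supp\tbif^{D_d}$. As $\Supp\tbif^{D_d}$ is closed, I would reduce this to showing that a dense subset of $\Supp\tbif$ is contained in $\Supp\tbif^{D_d}$, and as that dense subset I would use the \emph{Misiurewicz parameters}: parameters $f_0$ for which some point of the vertical critical set $C_{f_0}$ is strictly preperiodic, through a transverse landing, to a repelling periodic point $P_0=(z_0,w_0)$ of $f_0$. (Density of Misiurewicz parameters in $\Supp\tbif$ should follow by the same potential-theoretic argument as in dimension one, using the description of $\tbif$ in terms of the fibred dynamics over $J(p)$ from \cite{astorg2018bifurcations}.) Note that then $z_0$ being $p$-periodic and $P_0$ repelling forces $z_0\in J(p)$ and $P_0$ \emph{doubly} repelling, while the fibre $\{z_0\}\times\C$ is $f_0$-invariant and carries a one-dimensional repelling cycle for the vertical polynomial $q_{z_0}:=q(z_0,\cdot)$.

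The heart of the argument is to show that arbitrarily close to such an $f_0$ there are parameters at which $D_d$ critical points bifurcate independently. The plan is to fabricate many independent copies of the single Misiurewicz relation present at $f_0$ by spreading its postcritical set along $J(p)$. Passing first to a Misiurewicz parameter whose $P_0$ has period high enough that the multiplier of $z_0$ along its $p$-cycle dominates in modulus the vertical multiplier of $P_0$, a variant of the inclination lemma (Palis's $\lambda$-lemma) applied to the forward-invariant vertical postcritical set shows that the forward iterates of the postcritical branch through $P_0$ accumulate onto a \emph{horizontal} local curve through $P_0$, i.e. a graph over a neighbourhood of $z_0$ in the base; iterating this graph forward and using that the forward images of a neighbourhood of $z_0\in J(p)$ eventually cover $J(p)$ (topological exactness of $p|_{J(p)}$), the postcritical set of $f_0$ becomes $C^{0}$-close to multisections lying over a neighbourhood of $J(p)$. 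Then, for each repelling periodic point $\zeta\in J(p)$ of $p$ and each repelling periodic point $Q=(\zeta,v)$ of the vertical dynamics over $\zeta$, a small perturbation of $q$ — one essentially localized, for the vertical dynamics, to the fibre over $\zeta$ — pushes a sheet of the postcritical set onto $Q$, producing a new transverse Misiurewicz relation. Choosing finitely many such $\zeta$ and, over each, several of the $d-1$ vertical critical points, and adjusting the parameter one relation at a time, one reaches a parameter $f_*$ as close to $f_0$ as wanted carrying $D_d$ transverse preperiodic-to-repelling relations with linearly independent differentials. (Relaxing one of these $D_d$ equalities to an open condition yields, as a by-product, codimension-one subfamilies whose bifurcation locus has non-empty interior.) The hypotheses on $p$ enter decisively here: $J(p)$ not totally disconnected provides a non-degenerate continuum along which the postcritical set can spread and the abundance of repelling periodic points in general position, while $p$ being neither a power map nor a Chebyshev polynomial rules out a hidden rigidity of the vertical dynamics over those periodic points that would force the landing conditions to be dependent and cap the number of independent relations below $D_d$.

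To conclude I would prove and apply an analytical criterion of the following shape: at a parameter carrying $k$ transverse preperiodic-to-repelling relations with linearly independent differentials, $\tbif^{k}$ is nonzero in every neighbourhood. The mechanism should be that each such relation forces an activity-type current $T_i\le\tbif$ whose $dd^{c}$-mass concentrates, to positive order, on the corresponding Misiurewicz hypersurface $H_i$, so that $\tbif^{k}\ge T_1\wedge\cdots\wedge T_k\ge c\,[H_1]\wedge\cdots\wedge[H_k]$, a nonzero positive measure with the parameter in its support by transversality; making the domination $T_i\le\tbif$ precise is delicate given the fibred nature of $L_v$, and I would count setting up this criterion as part of the work. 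Granting it and applying it with $k=D_d$ at the parameters $f_*$ above, every Misiurewicz parameter — hence, by density and closedness, every parameter of $\Supp\tbif$ — lies in $\Supp\tbif^{D_d}$, and together with the trivial inclusions this gives the chain of equalities.

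I expect the real difficulty to sit in the second step and to split in two. First, making the ``variant of the inclination lemma'' precise: the postcritical set is only forward-invariant and a priori singular, so one must identify which local piece accumulates, in which topology, and with uniform control along the relevant part of $J(p)$, so that the later perturbations do not undo the spreading. Second — and, to my mind, the crux — the independence of the $D_d$ landing conditions, which demands quantitative control over how the finitely many coefficients of $q$ ``see'' the vertical dynamics over widely separated $p$-periodic points (an interpolation-type estimate); it is precisely here that the exclusion of power maps, Chebyshev polynomials, and totally disconnected Julia sets must be used in an essential way rather than merely inherited from \cite{astorg2018bifurcations}.
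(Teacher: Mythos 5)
Your outline — trivial inclusions one way, reduce to the dense set of Misiurewicz parameters, manufacture $D_d$ independent Misiurewicz relations nearby, conclude with an analytic criterion — matches the paper's architecture, and your two flagged difficulties are indeed the right ones. But both of your sketched mechanisms diverge from the paper's and, as written, would not go through. For the analytic criterion, you propose $\tbif^k\ge T_1\wedge\cdots\wedge T_k\ge c\,[H_1]\wedge\cdots\wedge[H_k]$: this is the Buff--Epstein route, which needs \emph{transversality} of the Misiurewicz hypersurfaces and a domination $T_i\le\tbif$ by activity currents that, as you already suspect, is not available in the fibred setting. The paper instead proves the decomposition $\tbif^k=\int_{J_p^k}T_{\underline z}\,\mu^{\otimes k}$ (Proposition~\ref{prop_decomposition}) and adapts Gauthier's \emph{large scale condition}: only \emph{proper} intersection (codimension $k$) is needed, and $\Supp\tbif^k$ is detected by showing the $k$-tuple of iterated critical marks $\Xi^{\underline c}_{\underline n}$ is a proper surjection onto a product of disks meeting the fibre Julia sets (Propositions~\ref{prop_sufficient_condition_bifk} and~\ref{prop_scale_then_bif}). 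For the geometric step, your phase-space $\lambda$-lemma at a fixed $f_0$ followed by a perturbation of $q$ ``localized to a fibre'' cannot work as stated: perturbations in $\skpd$ are never localized in $z$ because the coefficients $A_j(z)$ of $q$ have bounded degree, and moreover spreading a postcritical sheet forward over $J(p)$ at a fixed parameter loses $C^0$-control since the vertical expansion is stronger ($L_v\ge\log d$). You also orient the anchor cycle so that the base multiplier dominates ($|A|>|B|$); the paper takes the opposite \emph{vertical-like} orientation $|B|>|A|$, and this is not cosmetic — it is what makes the non-tangency condition \ref{item_good_nontangent} checkable at the new relation and what Przytycki--Zdunik actually furnishes.

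The actual engine of Proposition~\ref{prop:secmis} is a \emph{parameter-space} inclination argument which your sketch does not identify. In linearizing coordinates at the anchor repelling cycle (requiring non-resonance, \ref{item_s_resonance}), one sets $\phi_k(\lam):=\beta(z_kA^{-m_k},\lam)B(\lam)^{m_k}$ for $z_k\to 0$ in $J_p$ with $z_kA^{-m_k}B^{m_k}$ at unit scale, and Lemma~\ref{lem:renorm} shows $(\phi_k)$ is \emph{not normal} at $\lam_\infty$ because $\partial_\lam\phi_k\asymp m_k$, with the dominant term coming from differentiating $B(\lam)^{m_k}$ — hence the need for $B$ non-constant on the current subfamily (\ref{item_good_non_constant}/\ref{item_s_db}). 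Montel then forces the postcritical sheet onto the motion of a repelling point of a vertical-like IFS. Crucially, this is run as an induction inside the locus where the previously built relations persist, and one must verify at each step that at least one relation remains ``good'' in the sense of Definition~\ref{def:goodmis}; none of this bookkeeping is visible in your sketch. Finally, the interpolation obstruction you correctly worry about is sidestepped, not solved: all $D_d$ target repelling points are taken in the \emph{same} fibre $\{z\}\times\C$ over a single periodic $z$ with period $m>d$, so that $\lam\mapsto Q^m_{\lam,z}$ locally embeds $\skpd$ into $\pol(d^m)$, and Gorbovickis's theorem on multipliers-as-coordinates (Lemma~\ref{lem:multtrans}), together with estimates at infinity in the unicritical subfamily $U_d$, gives the required independence. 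The hypotheses on $p$ enter through Proposition~\ref{prop:modrep}: non-totally-disconnected, non-exceptional $p$ yields (Przytycki--Zdunik) a hyperbolic $\tilde H\subset J_p$ with $\dim_H\tilde H>1$ and positive entropy, hence an equilibrium state with Lyapunov exponent $<\log d\le L_v$, hence a vertical-like hyperbolic set — a quantitative mechanism that is not recoverable from the ``no hidden rigidity'' heuristic alone.
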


Theorem \ref{teo_main} is stated for the full family $\skpd$ of all 
polynomial 
skew products of degree $d$ over $p$ (up to affine conjugacy). 
One could ask whether such a result holds for algebraic subfamilies of $\skpd$: clearly, some special subfamilies
have to be ruled out, such as the family of trivial product maps of the form $(p,q) : (z,w) \mapsto (p(z), q(w))$. 
A less obvious example in degree 3 is given by the subfamily of polynomial
skew products over the base polynomial
$z \mapsto z^3$ of the form
$$f_{a,b} : (z,w) \mapsto (z^3, w^3 + awz^2+bz^3), \quad \quad (a,b) \in \C^2.$$
One can check that $f_{a,b}$ is semi-conjugated to the product
map $(z,w) \mapsto (z^3, w^3 + aw+b)$, and therefore that 
$\Supp \tbif^2(\Lambda) \subsetneq \Supp \tbif(\Lambda)$, where  $\Lambda:=\{f_{a,b}, (a,b) \in \C^2 \}$.

The proof of Theorem \ref{teo_main} indeed uses the fact that the family $\skpd$ is general enough so that 
it is possible to perturb a bifurcation 
parameter to change the 
dynamical behaviour of a critical point in a vertical fibre without affecting all other fibres. 
It would be  interesting
to classify algebraic subfamilies of $\skpd$ 
that, like $\Lambda$, are degenerate in the sense that a bifurcation in one fibre implies a bifurcation in all other fibre; 
for such families, the conclusion of Theorem \ref{teo_main} will not hold. 
 Likewise, it would be natural to try to extend
Theorem \ref{teo_main} to other families with a similar fibred structure,
 see for instance \cite{dupont2018dynamics}. 
To do this, one 
should first ensure that such a family is large enough in the sense above.

\medskip

The proof of Theorem \ref{teo_main} essentially consists of two ingredients, respectively
of analytical and geometrical flavours.

The first is 
 an analytical sufficient
condition for a parameter to be in the support of $\tbif^k$.
This is
 inspired by analogous results by Buff-Epstein 
\cite{buff2009bifurcation} and 
Gauthier \cite{gauthier2012strong}
in the context of rational
maps, and is based on the notion of \emph{large scale condition} introduced in \cite{astorg2019collet}.
It is a way to give a quantified meaning to
the
 \emph{simultaneous independent bifurcation of multiple critical points}, and to exploit this condition
 to prove the
 non-vanishing 
of $\tbif^k$.
This part does not require
essentially new arguments and is presented in Section \ref{s:beg}.

The second ingredient is a procedure
to build these multiple independent bifurcations at a common parameter
starting from a simple one. The idea is to start with
 a parameter with a \emph{Misiurewicz} bifurcation, i.e., a non-persistent collision between
 a critical orbit and a repelling point, and to construct a new parameter nearby where
  two 
 -- and actually, 
 $D_d$ --
independent  Misiurewicz bifurcations occur simultaneously. 
Here we say that 
$k$ Misiurewicz relations are 
\emph{independent} at a parameter $\lam$
if the intersection of the $k$ hypersurfaces given by the Misiurewicz relations has codimension
$k$ in $\skpd$, see Subsection \ref{ss-misiurewicz-relations}, and we denote by $\Bif^k$ the closure of such parameters.

This geometrical
 construction is our main technical result, and the main contribution of this paper. Together with
 the analytic arguments mentioned above
 (which give $\Bif^k \subseteq \Supp \tbif^k$ for all $1\leq k\leq D_d$)
 and the trivial inclusion $\Supp \tbif^{D_d}\subseteq \Supp \tbif$,
  it implies Theorem \ref{teo_main}.

\begin{teo}\label{teo_main_no_current}
Let $p$ be a polynomial with Julia set not totally disconnected,
	 which is neither conjugated to $z \mapsto z^d$
	nor to a Chebyshev polynomial.
Let $\skpd$ denote the family of polynomial skew products of degree $d\geq 2$ over the base polynomial $p$, up to affine conjugacy, 
and
let $D_d$ be its dimension. Then
\[
\Bif = \Bif^2 = \dots  = \Bif^{D_d}.
\]
\end{teo}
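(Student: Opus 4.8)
The inclusions $\Bif^{D_d}\subseteq\cdots\subseteq\Bif^2\subseteq\Bif^1$ are immediate: if $k+1$ Misiurewicz relations cut out a subvariety of codimension $k+1$ through a parameter $\lam$, then any $k$ of them cut out a subvariety of codimension exactly $k$ through $\lam$, so $\lam$ belongs to the closure of the parameters with $k$ independent relations. Moreover $\Bif^1=\Bif$: the inclusion $\Bif^1\subseteq\Bif$ is clear, since a Misiurewicz parameter is a bifurcation parameter, while $\Bif\subseteq\Bif^1$ is the density of Misiurewicz parameters in the bifurcation locus (see \cite{astorg2018bifurcations}). Hence everything reduces to the inclusion $\Bif\subseteq\Bif^{D_d}$, i.e.\ to showing that every Misiurewicz parameter is a limit of parameters at which $D_d$ Misiurewicz relations hold independently.

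Fix a Misiurewicz parameter $\lam_0$: along a vertical fibre over a repelling periodic point $a_0\in\julia(p)$, a vertical critical point $\mathbf c_1(\lam_0)$ has an orbit landing, non-persistently, on a repelling cycle $\hat a(\lam_0)$ of the corresponding return map. The engine of the proof is an \emph{inclination lemma} applied to this collision. Let $F_\lam$ be the iterate of $f_\lam$ fixing $\hat a(\lam)$, with vertical multiplier $\nu_\lam$, $|\nu_\lam|>1$, and set $u(\lam):=f_\lam^{N}(\mathbf c_1(\lam))-\hat a(\lam)$, a holomorphic, fibre-valued function with $u(\lam_0)=0$ and, by non-persistence, $u\not\equiv0$; replacing $\lam_0$ by a nearby parameter we may assume $du_{\lam_0}\neq0$. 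On the shrinking parameter neighbourhoods $E_n:=\{|u|\lesssim\delta|\nu|^{-n}\}$ of $\{u=0\}$ one has
\[
F_\lam^{\,n}\bigl(f_\lam^{N}(\mathbf c_1(\lam))\bigr)-\hat a(\lam)\ \approx\ \nu_\lam^{\,n}\,u(\lam),
\]
so the image of $E_n$ under $\lam\mapsto F_\lam^{\,n}(f_\lam^{N}(\mathbf c_1(\lam)))$ fills, for $n$ large, a disc of fixed radius $\sim\delta$ inside the fibre, centred at $\hat a(\lam)$. Consequently $\mathbf c_1$ can be steered, by an arbitrarily small perturbation of $\lam_0$, so that its orbit lands transversally on the continuation of any repelling cycle within distance $\delta$ of $\hat a$; and, after imposing one such relation, a large family of parameters near $\lam_0$ survives, whose remaining freedom can be used to activate further critical points. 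It is precisely here that the hypotheses on $p$ — Julia set not totally disconnected, and $p$ neither a power nor a Chebyshev map — enter, guaranteeing (through the structure theory of \cite{astorg2018bifurcations}) that $\skpd$ is large enough for this, i.e.\ that it contains no degenerate subfamily of the type of the family $\Lambda$ of the introduction.

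The proof is then an induction on $k$. Suppose we have a parameter $\lam_k$ close to $\lam_0$ at which $k$ Misiurewicz relations $\psi_1,\dots,\psi_k$ hold independently, so that $V_k:=\{\psi_1=\cdots=\psi_k=0\}$ is smooth of codimension $k$ near $\lam_k$. Choosing a $(k+1)$-st vertical critical point still active along $V_k$ (taking $a_0$ of large enough period that the corresponding vertical return map has more than $D_d$ critical points), we run the inclination-lemma spreading above \emph{within} $V_k$: as the parameter ranges over a small piece of $V_k$, the orbit of this critical point sweeps out a disc of definite size in a fibre, and we choose the parameter where it lands transversally on a repelling cycle. This produces $\lam_{k+1}\in V_k$, still close to $\lam_0$, with $k+1$ independent relations. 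After $D_d$ steps we obtain a parameter arbitrarily close to $\lam_0$ carrying $D_d$ independent Misiurewicz relations, whence $\lam_0\in\Bif^{D_d}$.

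The crux is the inductive step: showing that after $k$ Misiurewicz relations have been imposed one can still independently steer an additional critical orbit onto a repelling cycle. This demands a quantitative, parameter-uniform inclination lemma, so that the size of the swept-out discs does not degenerate as relations accumulate, together with a transversality statement ruling out hidden algebraic relations among the critical points of $\skpd$ — the precise point at which the assumptions on the base polynomial are used, and where the bulk of the technical work lies.
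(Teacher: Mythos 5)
Your proposal captures the broad scheme of the paper --- reduce to $\Bif \subseteq \Bif^{D_d}$, induct on the number of relations, and perturb to add one more --- but the step you flag as ``the crux'' at the end is precisely where the paper's content lies, and the mechanism you sketch for it does not match the paper's and would not close as written. There are three concrete problems.

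First, the ``inclination lemma'' spreading you describe runs off $d u_{\lambda_0}\neq 0$, a transversal crossing of the critical orbit through the repelling cycle. But in the inductive step the first $k$ relations are \emph{persistent} on $V_k=M^k$, so the corresponding $u$ vanishes identically there. The non-normality the paper actually uses (Lemma~\ref{lem:renorm}, inside Proposition~\ref{prop:secmis}) is driven by something else: the parameter dependence of the \emph{vertical multiplier} $B(\lambda)$ of the repelling point (conditions~\ref{item_good_non_constant} and~\ref{item_s_db}), so that the derivative of $\phi_k(\lambda)=\beta(z_kA^{-m_k},\lambda)B(\lambda)^{m_k}$ blows up like $m_k$ even though the underlying relation never breaks. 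The new critical relation is carried by a point produced from the persistent postcritical curve $L_\lambda$ itself, not by sweeping the original critical orbit onto a nearby cycle.

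Second, you posit a $(k+1)$-st critical point ``still active along $V_k$,'' but nothing guarantees one: a priori $V_k$ could sit entirely in a stability slice of $\skpd$ for every remaining critical orbit. Showing $\Bif(M^k)=M^k$ is exactly the job of Proposition~\ref{prop:secmis} and Corollary~\ref{cor:bifm=m}, which require the machinery of \emph{good Misiurewicz families} (Definition~\ref{def:goodmis}), together with Lemma~\ref{lem:multtrans} (a Gorbovickis-type transversality in a periodic fibre of period $>d$) to ensure that at every stage of the induction at least one of the chosen repelling cycles still has non-constant vertical multiplier on $M^k$ --- without which~\ref{item_good_non_constant} could fail somewhere and the spreading mechanism would stall.

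Third, you misplace the role of the hypotheses on $p$. They are not there to rule out algebraic degeneracies among critical points of $\skpd$ (that is handled by the Gorbovickis-type lemma, which needs no hypothesis on $p$). They enter through Proposition~\ref{prop:modrep}: by Przytycki--Zdunik, $J_p$ contains a hyperbolic subset $\tilde H$ with $\dim_H\tilde H>1$ and positive entropy, hence Lyapunov exponent $<\log d$, which is what yields a \emph{vertical-like} IFS whose limit set supplies the target repelling points $(z'',w'')$ and the cone estimates used both to preserve~\ref{item_good_vertical_like} and to verify~\ref{item_good_nontangent} for the new relation. Your sketch never mentions the vertical-like structure at all, and without it neither the targeting nor the preservation of goodness through the induction goes through.
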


In order to construct the desired Misiurewicz parameter, we will consider the motion 
of a sufficiently large hyperbolic subset of the Julia set near a parameter in the bifurcation locus.
This hyperbolic set needs to satisfy some precise properties, and this is where the
assumptions on $p$ come into play. The construction, presented in Section \ref{s:vlifs}, 
uses tools from the thermodynamic formalism
of rational maps, and more  generally of endomorphisms of $\P^k$, as explained in Appendix \ref{as:proof-lemma-fibered}.
Once the hyperbolic set is constructed, the proof proceed by induction. We show that, given a
Misiurewicz
relations  satisfying a given list of further properties (see Definition \ref{def:goodmis}), it is possible
to construct, one by one, the extra Misiurewicz relations happening simultaneously. The 
general construction and the application in our setting are 
given in Sections \ref{s:machine} and \ref{s:proof_teo}, respectively.

\medskip

Our main theorems
and the method developed for their proof 
have a number of consequences and corollaries. 
We list here a few of them.

\begin{cor}\label{big_families_mubif_all}
Let $p$ be a polynomial with Julia set not totally disconnected,
	 which is neither conjugated to $z \mapsto z^d$
	nor to a Chebyshev polynomial.
 Near any bifurcation parameter in $\skpd$ there exist algebraic subfamilies
 $M^k$ of $\skpd$ of any dimension $k<D_d$ such that the support of the bifurcation measure
of $M^k$ has non-empty interior in $M^k$.
\end{cor}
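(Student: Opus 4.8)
The plan is to deduce the corollary from the geometric construction underlying Theorem~\ref{teo_main_no_current} together with the analytic criterion of Section~\ref{s:beg}, by \emph{slicing} $\skpd$ with part of a family of good Misiurewicz hypersurfaces. Fix a bifurcation parameter $\lambda^{*}\in\skpd$ and an integer $1\le k<D_d$. By the construction of Sections~\ref{s:machine}--\ref{s:proof_teo}, arbitrarily close to $\lambda^{*}$ there is a parameter $\lambda_0$ at which $D_d$ good Misiurewicz relations $R_1,\dots,R_{D_d}$ (in the sense of Definition~\ref{def:goodmis}) hold and are independent, meaning that the algebraic hypersurfaces $\Hh_1,\dots,\Hh_{D_d}\subset\skpd$ they define meet at $\lambda_0$ with codimension exactly $D_d$. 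In particular $\lambda_0$ is isolated in $\bigcap_{j}\Hh_j$ and the $\Hh_j$ are mutually transverse near $\lambda_0$. I would let $M^k$ be the irreducible component through $\lambda_0$ of the algebraic set $\Hh_{k+1}\cap\dots\cap\Hh_{D_d}$; by the transversality it is smooth of dimension $k$ at $\lambda_0$, and restricting to its smooth locus---which is harmless---we regard it as an algebraic subfamily of $\skpd$ of dimension $k$ passing arbitrarily close to $\lambda^{*}$.

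Inside $M^k$ the traces $\Hh_1\cap M^k,\dots,\Hh_k\cap M^k$ are again hypersurfaces, and they meet at $\lambda_0$ with codimension exactly $k$ in $M^k$ because all the $\Hh_j$ are transverse there; moreover the estimates making $R_1,\dots,R_k$ good, being dynamical in nature (they concern the orbits of the critical points and the motion of the postcritical set, not the ambient parameter space), pass to the subfamily. Hence $R_1,\dots,R_k$ are good and independent at $\lambda_0$ \emph{within} $M^k$, i.e.\ $\lambda_0\in\Bif^k(M^k)$. Applying the analytic criterion of Section~\ref{s:beg} to the $k$-dimensional family $M^k$ then yields $\lambda_0\in\Supp\tbif^k(M^k)$; and since $\dim M^k=k$ the current $\tbif^k(M^k)$ is, by definition, the bifurcation measure of $M^k$, so $\lambda_0$ belongs to the support of that measure.

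It remains to promote the single point $\lambda_0$ to an open subset of $\Supp\tbif^k(M^k)$. This is the step using robustness, and I expect it to be an extension, to $k$ simultaneously and independently bifurcating critical points, of the mechanism that produces the codimension-$1$ subfamilies mentioned in the introduction: the variant of the inclination lemma used to create $R_1,\dots,R_k$ yields, for each of the $k$ critical points involved, a crossing of its postcritical part with a repeller that persists under perturbation, and these $k$ persistent crossings occur ``along independent directions''. Equivalently, the large scale condition that the criterion verifies at $\lambda_0$ in $M^k$ is defined by finitely many strict inequalities on data depending continuously on the parameter, hence holds on a whole neighbourhood $U$ of $\lambda_0$ in $M^k$; then for every $\lambda\in U$ the $k$ critical points bifurcate simultaneously and independently, so $\tbif^k(M^k)$ does not vanish on $U$. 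Therefore $U\subseteq\Supp\tbif^k(M^k)$, i.e.\ the support of the bifurcation measure of $M^k$ has non-empty interior in $M^k$, and it lies near $\lambda^{*}$; this proves the corollary.

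The main difficulty is precisely this last step: one must check that the robustness of the construction of Sections~\ref{s:machine}--\ref{s:proof_teo}---equivalently, the openness of the large scale condition of Section~\ref{s:beg}---is compatible with the slicing of $\skpd$ by the $D_d-k$ hypersurfaces $\Hh_{k+1},\dots,\Hh_{D_d}$, so that the persistent crossings responsible for $R_1,\dots,R_k$ survive both the restriction to $M^k$ and small perturbations within it, and still take place along $k$ transverse directions there. In practice this may force one to choose the $D_d$ relations $R_j$ and the slice $M^k$ compatibly from the start, rather than slicing an arbitrary configuration, which is where the quantitative bookkeeping of the construction has to be done with care. A minor secondary point, already noted, is the routine one of treating the algebraic set $M^k$ as a genuine holomorphic family by working near $\lambda_0$ on its smooth locus.
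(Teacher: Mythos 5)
Your approach diverges from the paper's at its very first move, and that divergence is where the real difficulty lies. The paper does \emph{not} slice at a single parameter carrying $D_d$ independent relations; it instead observes that the nested families $M^j=\Hh_1\cap\dots\cap\Hh_j$ produced in the proof of Theorem~\ref{teo_main_no_current} are (for $j<D_d$) themselves \emph{good Misiurewicz families} in the sense of Definition~\ref{def:goodmis}, so that Corollary~\ref{cor:bifm=m} (built on the density statement of Proposition~\ref{prop:secmis}) applies directly to each of them. That corollary gives $\Bif(M^j)=M^j$, and then running the same inductive machinery \emph{inside} $M^j$ yields the equality of $\Bif(M^j)$ with the support of the bifurcation measure of $M^j$, hence full support. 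Your route --- fix $\lambda_0$ on all $D_d$ hypersurfaces, slice by $\Hh_{k+1}\cap\dots\cap\Hh_{D_d}$, and invoke Proposition~\ref{prop_sufficient_condition_bifk} to place $\lambda_0$ in $\Supp\tbif^k(M^k)$ --- is fine up to and including the single-point conclusion, but it sidesteps Corollary~\ref{cor:bifm=m} entirely, and that is exactly the tool you are missing in the final step.

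The final step as you present it is incorrect. The fibred large scale condition of Definition~\ref{defi_large_scale} requires a nested family $\Omega_l$ of open sets with $\bigcap_l\overline{\Omega_l}=\{\lambda_0\}$ on which $\Xi_{\underline n_l}^{\underline c}$ is proper and surjective onto a product of disks. This is a statement tied rigidly to $\lambda_0$; it is not a finite collection of open inequalities in the parameter, and it does not persist to a neighbourhood. Indeed, membership in $\Supp\tbif^k$ is a closed condition, and the Misiurewicz relations underlying it cut out codimension-$k$ analytic sets, not open ones. To go from a single point in the support to an open set in the support you genuinely need the density argument: you need to know that \emph{every} parameter of $M^k$ is a limit of parameters with $k$ independent Misiurewicz relations inside $M^k$. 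This is what Proposition~\ref{prop:secmis} / Corollary~\ref{cor:bifm=m} provide, and they require $M^k$ to carry a \emph{persistently good} Misiurewicz relation.

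This brings out a second gap: you assert that goodness of $R_1,\dots,R_k$ ``being dynamical in nature \dots passes to the subfamily''. That is false for condition~\ref{item_good_non_constant}, which demands that the vertical multiplier of the associated repelling cycle be non-constant \emph{as a function on the family} $M$. Non-constancy on $\skpd$ does not imply non-constancy on a lower-dimensional slice; preserving~\ref{item_good_non_constant} is precisely the non-trivial bookkeeping in the heredity step of Theorem~\ref{teo_main_no_current}, where Lemma~\ref{lem:multtrans} is invoked to guarantee it. Moreover, your slice $\Hh_{k+1}\cap\dots\cap\Hh_{D_d}$ is the ``complementary'' one to the paper's $\Hh_1\cap\dots\cap\Hh_{D_d-k}$, and the inductive argument only establishes goodness along the latter chain; there is no reason for any of $R_{k+1},\dots,R_{D_d}$ to remain good on your slice. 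To repair the argument you should take $M^{k}$ to be the family of codimension $D_d-k$ produced by the induction (so that property~\ref{item_thm_induction_good} guarantees a persistently good relation on it), apply Corollary~\ref{cor:bifm=m} to get $\Bif(M^{k})=M^{k}$, and then iterate the construction inside $M^{k}$ to conclude that the support of its bifurcation measure is all of $M^{k}$.
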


These families are 
 given by the maps satisfying a given critical relation.
 Notice that $d$ (and thus $D_d$) can be taken arbitrarily large.
  This result is for instance an improvement
 of the main result in \cite{bt_desboves}, where 1-parameter families with  the same property
  are constructed.

\medskip

More strikingly, in \cite{dujardin2016non,taflin_blender}, Dujardin
and Taflin construct open sets in the bifurcation locus in the family $\Hh_d (\P^k)$
of all endomorphisms of $\P^k$, $k\geq 1$, 
of a given degree $d\geq 2$
(see also \cite{biebler2019lattes} for further examples).
Their strategy also works when considering the subfamily of polynomial skew products
(and actually these open sets are built close to this family). Combining
Theorem \ref{teo_main} with their result we thus get the following
consequence.

\begin{cor}\label{cor_open}
Let $p$ be a polynomial with Julia set not totally disconnected,
	 which is neither conjugated to $z \mapsto z^d$
	nor to a Chebyshev polynomial.
 The support of the bifurcation  measure in $\skpd$ 
has non empty interior.
\end{cor}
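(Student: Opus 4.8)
The plan is to deduce Corollary \ref{cor_open} by combining Theorem \ref{teo_main} with the construction of Dujardin and Taflin from \cite{dujardin2016non,taflin_blender}. First I would recall that Dujardin and Taflin produce, in the family $\Hh_d(\P^k)$ of all endomorphisms of $\P^k$ of degree $d$, a non-empty open subset $\Omega$ contained in the bifurcation locus, i.e.\ in $\Supp \tbif$. The key point, which they establish, is that this construction can be carried out with the perturbations kept inside the subfamily of polynomial skew products over a suitable base polynomial $p$: the blender/robust-intersection mechanism they use is local and the parameters they move are precisely of skew-product type. Thus one obtains a non-empty open set $\Omega \subseteq \skpd$ with $\Omega \subseteq \Supp \tbif$. (One must only check that the base polynomial arising in their construction can be taken to satisfy the hypotheses of Theorem \ref{teo_main} — Julia set not totally disconnected, not conjugate to $z\mapsto z^d$ or to a Chebyshev polynomial; this is harmless since their mechanism is robust under a choice of $p$ among an open set of polynomials, and the excluded maps form a meagre, in fact algebraic, set.)

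Next I would invoke Theorem \ref{teo_main}, which gives the chain of equalities
\[
\Supp \tbif \equiv \Supp \tbif^2 \equiv \dots \equiv \Supp \tbif^{D_d}.
\]
In particular $\Supp \tbif^{D_d} = \Supp \tbif \supseteq \Omega$, so the open set $\Omega$ is contained in the support of the top self-intersection $\tbif^{D_d}$ of the bifurcation current. Since $\tbif^{D_d}$ is a positive measure (a $(D_d,D_d)$-current on a $D_d$-dimensional space) whose continuous potential structure makes it well defined, and since by \cite{astorg2018bifurcations} (or the standard theory in the line of \cite{bassanelli2007bifurcation}) the bifurcation measure $\mu_{\mathrm{bif}}$ of the family is precisely $\tbif^{D_d}$, we conclude $\Supp \mu_{\mathrm{bif}} \supseteq \Omega$. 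Hence $\Supp \mu_{\mathrm{bif}}$ has non-empty interior, which is exactly the statement of Corollary \ref{cor_open}.

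The only genuinely delicate point in this argument is the first step: verifying that the Dujardin--Taflin open set can indeed be realised inside $\skpd$ for a base polynomial $p$ allowed by Theorem \ref{teo_main}. I expect this to be the main obstacle, but it is essentially a matter of bookkeeping on their construction rather than a new idea, since (as noted in the introduction) their open sets are built arbitrarily close to the locus of polynomial skew products and the perturbations they perform preserve the skew-product form; moreover the robustness of the blender construction leaves enough freedom to adjust $p$ away from the excluded power and Chebyshev maps and to keep its Julia set non-totally-disconnected. Granting this, the rest is an immediate consequence of Theorem \ref{teo_main} and the identification of the bifurcation measure with the top self-intersection current, so the corollary follows.
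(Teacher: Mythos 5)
Your proposal follows essentially the same route as the paper: invoke the Dujardin--Taflin construction to get a non-empty open set $\Omega\subseteq\Bif(\skpd)=\Supp\tbif$, then apply Theorem \ref{teo_main} to conclude $\Omega\subseteq\Supp\tbif^{D_d}$, which is the support of the bifurcation measure. The paper's own proof is the same two-line argument, stated even more tersely.

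One small caveat worth flagging: your parenthetical remark about ``adjusting $p$ away from the excluded maps'' inverts the quantifier structure of the corollary, which \emph{fixes} $p$ at the outset; what is actually needed (and what the paper asserts without detailed verification) is that the Dujardin--Taflin construction yields open sets in $\Bif(\skpd)$ for the \emph{given} base $p$, not that one may trade $p$ for a nearby admissible one. Also, your claim that the excluded base polynomials form a meagre, algebraic set is incorrect for the ``totally disconnected Julia set'' condition: that locus has non-empty interior (e.g.\ $c$ outside the Mandelbrot set for $z^2+c$), so it is neither algebraic nor meagre. Neither point changes the structure of the argument, since both your proof and the paper's ultimately rest on the same appeal to \cite{dujardin2016non,taflin_blender}, but the parenthetical as written would not stand up as a rigorous reduction.
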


Notice that it is not known whether the bifurcation locus is the closure of its 
interior
(see the last paragraph in \cite{dujardin2016non}). Hence, a priori, the open sets
as above could exist only in some regions of the parameter space.
 The last consequence of our main theorems
  is 
 a uniform and optimal bound
 for the Hausdorff dimension of the support of the bifurcation measure, which is
a generalization to this setting of the main result in \cite{gauthier2012strong}.

\begin{cor}\label{cor_intro_hausdorff}
Let $p$ be a polynomial with Julia set not totally disconnected,
	 which is neither conjugated to $z \mapsto z^d$
	nor to a Chebyshev polynomial.
The Hausdorff dimension 
of the support of the bifurcation measure
in $\skpd$
 is maximal at all points of its
 support.
\end{cor}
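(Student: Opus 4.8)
The plan is to deduce Corollary \ref{cor_intro_hausdorff} from Theorem \ref{teo_main} together with the Hausdorff dimension estimates that already exist for the bifurcation measure in families of endomorphisms of $\P^k$, following the strategy of Gauthier \cite{gauthier2012strong}. The key point is the following dichotomy: the support $\Supp \mu_{\bif}$ of the bifurcation measure always satisfies $\dim_H \Supp \mu_{\bif} \geq 2 D_d$ at \emph{some} point (for instance because $\mu_{\bif}$ gives positive mass to its support, and its local dimension is controlled from below); what one has to upgrade is the \emph{at some point} to \emph{at every point}. The mechanism for this upgrade is precisely a coincidence of supports of the successive self-intersections: if $\Supp \tbif^{D_d} = \Supp \tbif$, then every point of $\Supp\tbif = \Supp\mu_{\bif}$ lies in the support of the top self-intersection current, and near such a point one finds a small ball on which $\tbif^{D_d}$ has positive mass, hence on which $\mu_{\bif}$ (which is comparable to $\tbif^{D_d}$) has positive mass.

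First I would recall the lower bound on the pointwise Hausdorff dimension of $\mu_{\bif}$: there is a constant, depending only on $d$ and $p$, such that for $\mu_{\bif}$-almost every parameter $\lambda$ one has $\underline{\dim}_\mu(\lambda) \geq 2D_d$, where $2D_d$ is the real dimension of $\skpd$; this is the optimal value and follows from the transversality-type estimates underlying the equidistribution results quoted in the introduction (as in \cite{gauthier2012strong,gauthier2016equidistribution}). Second, by Theorem \ref{teo_main} we have $\Supp \mu_{\bif} = \Supp \tbif = \Supp \tbif^{D_d}$. Third, fix any $\lambda_0 \in \Supp \mu_{\bif}$ and any $r>0$. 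Since $\lambda_0 \in \Supp \tbif^{D_d}$, the closed ball $\bar B(\lambda_0,r)$ has positive mass for $\tbif^{D_d}$; but $\tbif^{D_d}$ is (up to normalization and the fact that $D_d$ is the maximal exponent) exactly the bifurcation measure $\mu_{\bif}$, so $\mu_{\bif}(\bar B(\lambda_0,r)) > 0$. Hence the restriction of $\mu_{\bif}$ to $B(\lambda_0,r)$ is a nonzero measure, and by the almost-everywhere lower bound on the local dimension there is a set of positive $\mu_{\bif}$-measure inside $B(\lambda_0,r)$ of points with local dimension at least $2D_d$; in particular $\dim_H \big(\Supp \mu_{\bif} \cap B(\lambda_0,r)\big) \geq 2D_d$. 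Letting $r \to 0$ gives that the local Hausdorff dimension of $\Supp \mu_{\bif}$ at $\lambda_0$ is $\geq 2D_d$, and the reverse inequality is trivial since $\skpd$ has real dimension $2D_d$. Since $\lambda_0 \in \Supp\mu_{\bif}$ was arbitrary, this proves maximality at every point of the support.

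The main obstacle is the first step: making rigorous the claim that $\mu_{\bif}$-almost every parameter has local dimension at least $2D_d$, and that $\mu_{\bif}$ coincides with the top self-intersection $\tbif^{D_d}$ in these families. The definition of $\mu_{\bif}$ as $\tbif^{D_d}$ is natural but one should check it is the right object (non-trivial, and with continuous-potential-type regularity so that the wedge powers make sense — this is guaranteed by the continuity of the Lyapunov function $L_v$ and its potentials, as discussed after the definition of the bifurcation locus). The pointwise dimension bound is the more delicate part: in Gauthier's one-dimensional argument it uses a Hölder estimate on the bifurcation measure coming from the transversality of the equidistributing hypersurfaces $\{\text{Per}_n(\eta) = 0\}$, and one needs the analogue of those transversality estimates in $\skpd$. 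These are essentially available from the equidistribution statements sketched in the introduction (the generic-$\eta$ equidistribution of $\tbif^k$ by cycles of vertical multiplier $\eta_i$), but translating the equidistribution into a quantitative Hölder modulus for $\mu_{\bif}$ requires some care; alternatively, one can cite the general estimates for the Hausdorff dimension of the support of the measure of maximal bifurcation in families of endomorphisms of $\P^k$ if such a reference is available, in which case the corollary is immediate from Theorem \ref{teo_main}. I would structure the write-up so that everything past the dimension estimate is the soft argument above, isolating the analytic input into a single lemma or citation.
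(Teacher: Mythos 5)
Your argument has a genuine gap in its first step: the pointwise dimension lower bound $\underline{\dim}_{\mu_{\bif}}(\lambda) \geq 2D_d$ for $\mu_{\bif}$-almost every $\lambda$ is false, and this is not a technicality one can patch by choosing the right reference. The analogous statement already fails for $D_d=1$: in the quadratic family, $\mu_{\bif}$ is the harmonic measure of the Mandelbrot set, whose Hausdorff dimension is exactly $1$ by Makarov's theorem (the complement is simply connected), so its lower local dimension is $1$ almost everywhere, not $2$. Thus the support of the bifurcation measure has full dimension (Shishikura) while the measure itself is concentrated on a set of much smaller dimension. Gauthier's result in \cite{gauthier2012strong} — which Corollary \ref{cor_intro_hausdorff} generalizes — is precisely a statement about the \emph{support} of $\mu_{\bif}$ having full local dimension, and his proof does not pass through any local-dimension lower bound on the measure; it builds, near a maximally-transverse Misiurewicz parameter, a parameter set of almost-full Hausdorff dimension by transferring a hyperbolic set from dynamical to parameter space. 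Equidistribution of $\Pern$-hypersurfaces, which you invoke, does not produce such a pointwise estimate on the limit measure.

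The paper's actual proof is short and takes a different route, which your closing "alternative" sentence gestures at but does not execute. After using Theorem \ref{teo_main} to reduce to showing that the bifurcation \emph{locus} $\Supp\tbif$ has maximal Hausdorff dimension near each of its points, one invokes \cite[Theorem 3.3]{astorg2018bifurcations} to embed the bifurcation loci of the one-variable return-map families over periodic fibres into $\Bif(\skpd)$, and then applies McMullen's universality theorem \cite{mcmullen2000mandelbrot}, which yields that those return-map bifurcation loci have full Hausdorff dimension. Your steps 2 and 3 — the soft passage from a local dimension statement to the conclusion — are logically sound, but the analytic input they rest on (a.e.\ local dimension $2D_d$ for $\mu_{\bif}$) does not hold, so the proposal as written does not prove the corollary.
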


Notice that, in the family of all endomorphisms of a given degree, such a uniform estimate 
is not known even for the bifurcation locus, see \cite{berteloot2018hausdorff} for some local
estimates.

\subsection*{Acknowledgements}
This project has received funding from
 the French government through the programs 
 I-SITE ULNE / ANR-16-IDEX-0004 ULNE,  
LabEx CEMPI /ANR-11-LABX-0007-01, and
ANR JCJC Fatou ANR-17- CE40-0002-01, from
the CNRS  through the
 program PEPS JCJC 2019, and
 from
 the Louis D. Foundation
 through the project "Jeunes Géomètres".

This work was partly done in Kyoto
during the first author's visit to KIT through a
FY2019 JSPS (Short Term)
Invitational Fellowship
for Research in Japan and
 the authors' visit to RIMS.
 The authors would like to thank 
 these Institutions and Y\^usuke Okuyama
  for their support, warm welcome, and excellent work conditions.

\section{Notations and preliminary results}\label{s:prelim}

\subsection{Notations}\label{ss:notations}
We collect here the main notations that we will use
through all the paper. We refer to \cite{astorg2018bifurcations} and \cite{jonsson1999dynamics} for more 
details.

Given a polynomial skew product of degree $d\geq 2$ of the form $f(z,w)=(p(z),q(z,w))=: (p(z),q_z (w))$, we
will write the $n$-th iterate of $f$
as
\[
f^{n} (z,w) = (p^{n} (z), q_{p^{n-1} (z)}\circ \dots \circ q_z (w))=: (p^n (z), Q^n_z (w)).
\]
In particular, if $z_0$ is a $n_0$-periodic point for $p$, the map $Q^{n_0}_{z_0}$ is the return map
to the vertical fibre $\{z_0\}\times \C$ and is a polynomial of degree $d^{n_0}$. For every $z$ in the Julia set
$J_p$ of $p$, we denote by $K_z\subset \C$ the set of of points $w$
such that the sequence
 $\{Q_z^n\}$ is bounded and by $J_z$ the boundary of $K_z$. 
 Given a subset $E\subseteq J_p$, we denote  $J_E:= \bar{ \cup_{z\in E} \{z\}\times J_z}$.

Let us now denote by $(f_\lam)_{\lam \in M}$ a holomorphic family of polynomial
skew products of a given degree $d \geq 2$,
that is a holomorphic map
$F\colon M\times \C^2 \to \C^2$
such that $f_\lam := F(\lam, \cdot)$ is a polynomial
skew product of degree $d$ for all $\lam \in M$.
 We will denote 
  by $\skpd$ the
  family of all
polynomial skew products of degree $d$
with the given polynomial $p$ as first component, 
 up to affine conjugacy. 
We set $D_d  := \dim \skpd$.
An explicit description of these families
in the case $d=2$ is given in \cite[Lemma 2.9]{astorg2018bifurcations}, 
the general case 
is similar.

 \begin{lemma}\label{lemma_param_skpd}
Every polynomial skew product of degree $d\geq 2$ over a polynomial $p$ 
is affinely conjugated to a map of the form
\[
(z,w) \mapsto \big( p(z), w^d + \sum_{j=0}^{d-2} w^{j} A_j(z) \big) \quad \mbox{ with } \quad  \deg_z A_j = d-j.
\]
\end{lemma}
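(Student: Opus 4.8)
The plan is to establish the normal form by a two-step reduction: first conjugating the base $p$ into a convenient shape (which we are free to do, since affine conjugacies of the skew product that fix the fibration act on the base coordinate by affine maps), and then, for the fibre coordinate, absorbing the leading coefficient and the subleading term of $q$ by an affine change $w \mapsto \alpha(z) w + \beta(z)$. Let me spell this out. An affine conjugacy of $\C^2$ respecting the skew product structure $f(z,w)=(p(z),q_z(w))$ must be of the form $\Phi(z,w) = (az+b, c(z)w+e(z))$ with $a\neq 0$ and $c(z)$ a nowhere-vanishing affine function of $z$, hence a nonzero constant $c$; thus $\Phi(z,w)=(az+b, cw+e(z))$ with $e$ affine. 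Conjugating $f$ by such a $\Phi$ replaces $p$ by its affine conjugate and replaces $q_z(w)$ by $c\, q_{(z-b)/a}\big((w-e(z))/c\big) + e(p((z-b)/a))$.

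First I would use the constant $c$: writing $q_z(w) = a_d(z) w^d + a_{d-1}(z)w^{d-1} + \dots$ with $a_d(z)$ a nonzero polynomial (regularity), the leading coefficient of the new fibre map in the $w$-variable is $c^{1-d} a_d$. However $a_d$ need not be constant, so a single scalar $c$ cannot kill it in general — instead one uses the freedom in $e(z)$ together with a genuine change of the $w$-variable that is allowed to depend on $z$. This is the point where one must be slightly careful: the group of transformations used in Lemma~\ref{lemma_param_skpd} is \emph{not} the affine automorphism group of $\C^2$ but the larger group of fibre-preserving changes of coordinates $(z,w)\mapsto (\psi(z), \alpha(z)w+\beta(z))$ with $\psi$ affine and $\alpha,\beta$ polynomials — these are exactly the maps that preserve the class of polynomial skew products and, when $\psi=\id$, do not change the base dynamics. (In the present family the base $p$ is fixed, so one only allows $\psi$ that conjugate $p$ to itself, together with arbitrary polynomial $\alpha,\beta$; this is consistent with the $d=2$ description in \cite[Lemma 2.9]{astorg2018bifurcations}.) Using $\alpha(z)$ chosen so that $\alpha(z)^{d-1} = a_d(z)$ is not possible for a polynomial unless $a_d$ is a perfect power; the correct move is: the top coefficient of $w$ in $f^2$ on a periodic fibre is a multiplier-type invariant, and one normalizes instead by allowing $\alpha$ to be any nonvanishing holomorphic function — but since we only conjugate polynomially, the honest statement is that after an affine conjugacy of $\C^2$ alone one reduces $a_d$ to a \emph{constant}, and one checks that regularity forces this constant to be nonzero, then rescales it to $1$. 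Let me instead take the cleaner route actually used: conjugate by $(z,w)\mapsto(z, \mu w)$ for a scalar $\mu$; this multiplies $a_j(z)$ by $\mu^{1-d}\cdot\mu^{d-j}=\mu^{1-j}$, wait — recomputing, $\mu q_z(w/\mu) = \mu\sum a_j(z)\mu^{-j} w^j = \sum a_j(z)\mu^{1-j}w^j$; so $a_d \mapsto \mu^{1-d}a_d$ and to make the leading coefficient $1$ we'd need $\mu^{1-d}=1/a_d$, again requiring $a_d$ constant. So the genuine first reduction must be: \emph{because $f$ is regular, the coefficient of $w^d$ in $q$ is a nonzero constant} — this is part of the standing definition of "regular" recalled at the start of the introduction — hence after scaling $w$ by a constant we may assume $a_d\equiv 1$.

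With $a_d\equiv 1$, the second step is to kill the $w^{d-1}$ term: conjugate by the translation $(z,w)\mapsto(z, w+\beta(z))$ for a polynomial $\beta$. The new fibre map is $q_z(w-\beta(z)) + \beta(p(z))$, whose $w^d$ coefficient is still $1$ and whose $w^{d-1}$ coefficient becomes $a_{d-1}(z) - d\,\beta(z)$. Choosing $\beta := a_{d-1}/d$ (a polynomial) removes it, so we reach the form $w^d + \sum_{j=0}^{d-2} w^j A_j(z)$. It remains to compute $\deg_z A_j = d-j$. This is forced by the \emph{extendibility to $\P^2$}: homogenizing $f$ with a variable $t$, the map $[z:w:t]\mapsto[p(z)t^{?}:\dots]$ must have no common factor and must have the right degree; concretely, writing $f$ in the chart and demanding that it extends holomorphically to the line at infinity is equivalent to $\deg(w^d + \sum w^j A_j(z)) = d$ as a polynomial in $(z,w)$ with the weighting that makes it homogeneous, i.e. $\deg_z A_j \le d - j$, while the normalization (and irreducibility at infinity, i.e. the indeterminacy point of $f$ on $\{t=0\}$ being as prescribed) pins down $\deg_z A_j = d-j$ exactly; the base map being $p$ of degree $d$ is what provides the matching count. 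I would verify this by the explicit homogenization: $Q(z,w,t):= t^d q(z/t,w/t) = w^d + \sum_{j=0}^{d-2} w^j t^{d-j} A_j(z/t)$, and $t^{d-j}A_j(z/t)$ is a polynomial iff $\deg A_j\le d-j$; regularity/extendibility to $\P^2$ (no indeterminacy on the fibre direction) is then equivalent to equality, and one records that the general case is "similar" to the written-out case $d=2$ of \cite[Lemma 2.9]{astorg2018bifurcations}.

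The main obstacle I anticipate is purely bookkeeping around the degree constraint $\deg_z A_j = d-j$: one must be careful that "affinely conjugate" in the statement is understood in the appropriate sense (fibre-preserving polynomial changes of the $w$-coordinate, with the base conjugacy affine), and that the extension-to-$\P^2$ condition is correctly translated into the sharp degree equalities rather than mere inequalities — the inequalities are easy, the equalities require using that $p$ genuinely has degree $d$ and that $f$ is a well-defined endomorphism of $\P^2$ of algebraic degree $d$ with the line at infinity totally invariant. Everything else — the scaling of $a_d$ to $1$ using regularity, and the Tschirnhaus-type translation $\beta = a_{d-1}/d$ to remove the $w^{d-1}$ term — is a routine computation that I would present in one or two lines, referring to \cite[Lemma 2.9]{astorg2018bifurcations} for the model case.
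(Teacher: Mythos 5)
Your final argument is correct, and it is the computation that the paper itself delegates to \cite[Lemma~2.9]{astorg2018bifurcations} (stated there for $d=2$, the general case being ``similar''); there is no separate proof in this paper to compare against. However, most of the detours in your write-up stem from a single misconception that you should clear up: the coefficient of $w^d$ in $q$ is \emph{automatically} a nonzero constant, with no normalization needed. A polynomial skew product of degree $d$ has $q$ of \emph{total} degree $d$ in $(z,w)$, so the coefficient of the degree-$d$ monomial $w^d$ necessarily has $z$-degree zero; regularity (extendibility to $\P^2$) is exactly the statement that this scalar is nonzero. Once you see this, the entire discussion about $a_d(z)$ ``possibly non-constant'', about enlarging the conjugacy group to polynomial fibre changes $(z,w)\mapsto(\psi(z),\alpha(z)w+\beta(z))$, and about multiplier-type invariants is unnecessary: an honest affine conjugacy $(z,w)\mapsto(z,\mu w)$ with $\mu^{d-1}=a_d$ makes the top coefficient $1$, and the translation $(z,w)\mapsto(z,w+a_{d-1}(z)/d)$ (affine, since $a_{d-1}$ has $z$-degree at most $1$ by the total-degree constraint) kills the $w^{d-1}$ term. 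Both conjugacies act by the identity on the base coordinate, so $p$ is untouched, and a direct expansion shows they preserve the degree bounds $\deg_z A_j\le d-j$ imposed by $\deg q=d$. Finally, the equality ``$\deg_z A_j = d-j$'' in the statement is shorthand for ``$A_j$ ranges over $\C_{d-j}[z]$'', i.e.\ $\deg_z A_j\le d-j$ (as your own computation and the unicritical subfamily, where $A_j\equiv 0$ for $j\ge 1$, make clear); trying to extract an exact equality from extendibility to $\P^2$, as you sketch at the end, is chasing something that is not there.
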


We are interested in \emph{bifurcations}  within families of polynomial skew products. Following \cite{bbd2015},
the \emph{bifurcation locus} $\Bif$ is defined as the support of the $(1,1)-$positive closed
current $\tbif := dd^c_\lam L(\lam)$ on $M$, where 
$L(\lam)$ is the Lyapunov function associated to $f_\lam$ with respect to its measure of maximal entropy.
In the case of polynomial skew
products, the function $L$ has a quite explicit description.
Indeed, by \cite{jonsson1999dynamics} we have $L(\lam) =L_p (\lam) + L_v (\lam)$, where
\begin{equation}\label{eq_lyapunov}
L_p (\lam) = \log d +\sum_{z \in C_{p_\lam}} G_{p_\lam} (z) \mbox{ and }
L_v (\lam) = \log d +\int \Big( \sum_{w  : q'_{\lam,z}(w)=0}G_{\lam} (z,w)\Big) \mu_{p_\lam} (z).
\end{equation}
Here $\mu_{p_\lam}, G_{p_{\lam}}, C_{p_\lam}$ are
 the measure of maximal entropy, the Green function and the critical set
 (whose points are counted with multiplicity) of $f_\lam$ and $p_\lam$ respectively,
  and
$G_{\lam}(z,w):= \lim_{n\to \infty} \frac{1}{n}\log^+ \big\|Q^{n}_{\lam,z} (w)\big\|$
is the \emph{non-autonomous Green function} for the family $\{Q^n_{\lam,z}\}_{n\in \N}$.
The current $T_p := dd^c_\lam L_p(\lam)$ is positive and closed. We proved in \cite[Proposition 3.1]{astorg2018bifurcations}
that
$T_v :=dd_\lam^c L_v = \tbif - T_p$ is also positive and closed. This allowed us to define the
 \emph{vertical bifurcation} in any family of polynomial skew products. This was generalized in \cite{dupont2018dynamics}
 to cover families of endomorphisms of $\P^k (\C)$ preserving a fibration. Of course, when $p$ is constant
 we have $\tbif= T_v$.

\subsection{Families defined by Misiurewicz relations}\label{ss-misiurewicz-relations}
By \cite{bbd2015, b_misiurewicz} the bifurcation locus 
of a family $(f_\lam)_{\lam \in M}$
coincides with the closure of the set of \emph{Misiurewicz parameters}, i.e., parameters for which we have a
non-persistent intersection between some component of the post critical  set and the motion
of some repelling point. 
More precisely, in our setting
take $\lam_0 \in \skpd$ and let
 $M$ be any holomorphic
 subfamily of $\skpd$ such that $\lam_0 \in M$.
A \emph{Misiurewicz relation} 
for $f_{\lam_0}$
  is an equation of the form
   $f_{\lam_0}^{n_0}(z_0,c_0)=(z_1,w_1)$
where $(z_1,w_1)$ is a repelling periodic point of period $m$ for $f_{\lam_0}$, 
and $q_{z_0,\lam_0}'(c_0)=0$.

Assume 
that
 $c_0$ is a simple root of $q_{z_0,\lam_0}'$ (this assumption
could be 
removed, but we  keep it here
for the sake of simplicity). 
Then there is a unique holomorphic map $\lam \mapsto c(\lam)$ 
 defined on a neighbourhood of $\lam_0$ in $\skpd$
 such that $c(\lam_0)=c_0$. 
 Similarly, it is possible to locally follow holomorphically the repelling point 
 $(z_1,w_1)$ as $\lam \mapsto (z_1, w_1(\lam))$.

The Misiurewicz relation $f_{\lam_0}^{n_0}(z_0,c_0)=(z_1,w_1)$ is said to be \emph{locally persistent in $M$} if 
$f_\lam^{n_0}(z_0,c(\lam))=(z_1,w_1(\lam))$ for all $\lam$ in a neighbourhood of $\lam_0$ in $M$.
If this is not the case,
the equation 
$f_\lam^{n_0}(z_0,c(\lam))=(z_1,w_1(\lam))$ defines a germ of analytic
 hypersurface in $M$
at $\lam_0$, which is
open inside  the algebraic hypersurface of $M$ given by
$\{\lam \in M: \res_w(q_{\lam,z_0}', Q_{\lam,z_0}^{n_0+m}-Q_{\lam,z_0}^m)=0 \}$. 
Here, $\res_w(P,Q)$ denotes the resultant of two polynomials $P,Q \in A[w]$, where $A:=\C[\lam]$; it is therefore an element of $A$.
 Notice that 
this algebraic hypersurface consists of all $\lam \in M$ 
such that some critical point in the fibre at $z_0$ lands after $n_0$ iterations on some
 periodic point of period dividing $m$. 
 We also
say in this case that $\lam_0$ is a \emph{Misiurewicz parameter in $M$}. 
If the Misiurewicz relation is non-persistent in $M$, we denote by
$M_{(z_0,c), (z_1,w_1),n_0}$ 
(or by
$M_{(z_0,c_0), (z_1,w_1 (\lam_0)),n_0}$ if we wish to emphasize the starting parameter $\lam_0$)  this irreducible component and 
we call it the locus where the relation is locally preserved.
We may avoid 
mentioning the periodic point if this does not create confusion.

\subsection{The unicritical subfamily $U_d \subset \skpd$}\label{ss_unicritical}

We
	consider here the
	 \emph{unicritical
	subfamily} $U_d\subset \skpd$ 
	given by
 \begin{equation}\label{eq_unicritical}
  U_d:= \{f(z,w)=(p(z), w^d+a(z))\},
 \quad  a(z) \in \C_d[z] \sim \C^{d+1}.
 \end{equation} 
Thus,   $U_d$ has dimension $d+1$. We
 parametrize
it with $\lam := (a_0, \dots, a_d)$, where 
the $a_i$ are the coefficients of $a(z)$.
We will write $\lam(z_0)=0$ when $z_0$ is a root
of the polynomial $a(z)$ associated to $\lam$, and similarly
$\lam' (z_0)=0$ when $z_0$
is a root of $a'(z)$.

 We 
can compactify
this parameter space
 to $\P^{d+1}$ and we denote by $\P^{d}_\infty$
the hyperplane at infinity.
Notice that, 
unless $p'(z_0)= 0$,
 $(z_0,0)$ is the only critical point for $f_\lam$
in the fibre $\{z=z_0\}$
 (this justifies the name chosen for this family, coherently with the name of the
one dimensional unicritical family $f_{\lam}(z)= z^d + \lam$).

\begin{lemma}\label{lemma_claim_12}
There exist two positive constants $C_1, C_2$ 
such that, for all $\lam\in U_d$
 and for all $z\in J_p$, we have 
$K_z(f_\lam) \subset \D(0,C_1 + C_2 |\lam|^{1/d})$.
Moreover, if $\lam_j\in M$
is a sequence with $|\lam_j|\to \infty$
and $[\lam_j]\to [\lam_\infty]$ for some $\lam_\infty$ such that $\lam_\infty (z_0)\neq 0$,
 then for all $w_j \in K_{z_0} (f_{\lam_j})$ we
have $|w_j| \asymp |\lam_j|^{1/d}$ as $j\to \infty$.
\end{lemma}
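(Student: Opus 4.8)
The plan is to derive both assertions from elementary escape‑rate estimates for the vertical maps $q_{\lam,z}(w)=w^d+a_\lam(z)$, which are monic of degree $d$ with a single non‑leading coefficient $a_\lam(z)$ that can be controlled uniformly in $z$ over the compact forward‑invariant set $J_p$. As a preliminary reduction I would note that, since $p$ has degree $\geq 2$, $J_p$ is compact, forward invariant and infinite, so any element of $\C_d[z]$ vanishing on $J_p$ vanishes identically; hence $\lam\mapsto \|a_\lam\|_{J_p}:=\sup_{z\in J_p}|a_\lam(z)|$ is a norm on $\C_d[z]\cong\C^{d+1}$, and by equivalence of norms in finite dimension $\|a_\lam\|_{J_p}\asymp|\lam|$. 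Write $A:=\|a_\lam\|_{J_p}$.

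For the first assertion I would set $R_0:=\max\big((2A)^{1/d},\,4^{1/(d-1)}\big)$ and check that if $|w|\geq R_0$ then, for every $z\in J_p$, $|q_{\lam,z}(w)|\geq|w|^d-A\geq\tfrac12|w|^d\geq 2|w|$. Since $p^k(z)\in J_p$ for all $k$, this inequality can be iterated along the orbit, giving $|Q^n_{\lam,z}(w)|\geq 2^n|w|\to\infty$, so $w\notin K_z(f_\lam)$. Thus $K_z(f_\lam)\subset\D(0,R_0)$; and since $R_0\leq(2A)^{1/d}+4^{1/(d-1)}\leq C_1+C_2|\lam|^{1/d}$ with $C_1:=4^{1/(d-1)}$ and $C_2$ coming from $A\lesssim|\lam|$, the first claim follows.

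For the second assertion, the bound $|w_j|\lesssim|\lam_j|^{1/d}$ is immediate from the first part and $|\lam_j|\to\infty$. For the lower bound I would argue by contradiction: after passing to a subsequence, assume $|w_j|/|\lam_j|^{1/d}\to 0$, i.e. $|w_j|^d=o(|\lam_j|)$. Since the function $[\mu]\mapsto|a_\mu(z_0)|/|\mu|$ is well defined and continuous on $\P^{d+1}$ and $[\lam_j]\to[\lam_\infty]$ with $a_{\lam_\infty}(z_0)\neq 0$, we get $|a_{\lam_j}(z_0)|\asymp|\lam_j|$; hence $|q_{\lam_j,z_0}(w_j)|\geq|a_{\lam_j}(z_0)|-|w_j|^d\gtrsim|\lam_j|$. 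On the other hand, $q_{\lam_j,z_0}$ maps $K_{z_0}(f_{\lam_j})$ into $K_{p(z_0)}(f_{\lam_j})$ and $p(z_0)\in J_p$, so the first part gives $|q_{\lam_j,z_0}(w_j)|\leq C_1+C_2|\lam_j|^{1/d}=o(|\lam_j|)$ because $d\geq 2$, a contradiction. Therefore $|w_j|\asymp|\lam_j|^{1/d}$.

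I expect no genuinely hard step: the argument is soft. The only mildly delicate points are the uniformity of the lower bound over the choice of $w_j\in K_{z_0}(f_{\lam_j})$, which is exactly what the subsequence argument above takes care of, and the passage to the limit in $\P^{d+1}$, where one normalizes the parameters and uses continuity of evaluation at $z_0$ together with the non‑vanishing hypothesis $a_{\lam_\infty}(z_0)\neq 0$ to keep $|a_{\lam_j}(z_0)|$ comparable to $|\lam_j|$.
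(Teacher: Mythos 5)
Your proof is correct and follows essentially the same route as the paper's: bound $\|a_\lam\|_{J_p}$ by $O(|\lam|)$, iterate the escape estimate $|q_{\lam,z}(w)|\gtrsim |w|^d$ along the forward orbit to get the uniform radius $C_1+C_2|\lam|^{1/d}$, then combine this with $q_{\lam_j,z_0}(K_{z_0})\subset K_{p(z_0)}$ and $|a_{\lam_j}(z_0)|\asymp|\lam_j|$ (from $[\lam_j]\to[\lam_\infty]$ with $\lam_\infty(z_0)\neq 0$) to deduce $|w_j|^d\asymp|\lam_j|$. The only differences are cosmetic: you make the escape-radius constants explicit and phrase the lower bound as a subsequence contradiction, where the paper reads off $|w_j^d|\asymp|\lam_j|$ directly from the triangle inequality; both are the same estimate.
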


\begin{proof}
Set $A(\lam) := \max_{z \in J_p} |a(z)|$. Observe that we have $A(\lam) =\mathcal{O} ( |\lam|)$ as $|\lam|\to \infty$, hence
there exists $C_0>2$ such that $A(\lam)\leq C_0 |\lam|$ for all $\lam \in U_d$.
It follows that, if $w$ satisfies $|w|>C_0 |\lam|^{1/d}$, then
for any $z\in J_p$
we have
 $|q_z(w)| > C_0^2 |\lam| - A(\lam)\geq ( C_0^2  - C_0) |\lam| > C_0 |\lam|$.
 This proves the first assertion.
 
 For the second assertion, let 
 $a^{(j)} (z)$
 be the polynomial
associated to $\lam_j$.
 Take $w_j\in K_{z_0} (f_{\lam_j})$.
Hence, $q_{\lam_j,z_0}(w_j) \in K_{p(z_0)} (f_{\lam_j})$. 
 By the first part of the statement, we have 
 $|w_j^d + a^{(j)}(z_0)| = |q_{\lam_j,z_0}(w)|\leq C_1 + C_2 |\lam_j|^{1/d}$. 
 Since $\lam(z_0)\neq 0$, we have $|a^{(j)}(z_0)|\asymp |\lam_j|$ as $\lam_j\to \infty$. 
 Hence, $|w_j^d|\asymp |\lam_j|$,
  which gives $|w_j|\asymp |\lam_j|^{1/d}$.
\end{proof}

Let us now consider the intersection of a Misiurewicz hypersurface in $\skpd$
with $U_d$. This (when not empty)
is a Misiurewicz hypersurface in $U_d$. Since the only critical points
for maps in $U_d$  that can give non-persistent Misiurewicz relations in $U_d$
are of the form $(z_0,0)$ (and these all have multiplicity $d$),
we see that any Misiurewicz hypersurface of $U_d$ has the form
\begin{equation}\label{eq_mis_ud}
Q^{n}_{\lam,z_0} (0) = Q^{n+m}_{\lam,z_0} (0) \mbox{ for some } m,n\geq 1
\mbox{ and } z_0 \in J_p \mbox{ with } p^{n+m}(z_0)=p^{n}(z_0).
\end{equation}
For simplicity, we denote by $M_{z_0,n,m}$ the hypersurface defined by
\eqref{eq_mis_ud}.

\begin{lemma}\label{lemma_ez}
For any non-empty Misiurewicz hypersurface $M_{z_0,n,m}\subset U_d$ of the form \eqref{eq_mis_ud},
 the accumulation
on $\P^d_\infty$ 
of $M_{z_0, n,m}$
is precisely given by $E_{z_0} := \{ [\lam] \colon 
\lam(z_0) =0\}$.
\end{lemma}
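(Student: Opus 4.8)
The plan is to prove the two inclusions separately. First I would show that the accumulation set of $M_{z_0,n,m}$ on $\P^d_\infty$ is contained in $E_{z_0}$. Take a sequence $\lam_j \in M_{z_0,n,m}$ with $|\lam_j| \to \infty$ and $[\lam_j] \to [\lam_\infty]$ for some $\lam_\infty \in \P^d_\infty$. By the defining relation \eqref{eq_mis_ud}, the critical orbit point $Q^n_{\lam_j, z_0}(0)$ is periodic, hence in particular it belongs to $K_{p^n(z_0)}(f_{\lam_j})$. By the first assertion of Lemma \ref{lemma_claim_12}, this forces $|Q^n_{\lam_j,z_0}(0)| \leq C_1 + C_2 |\lam_j|^{1/d}$, so in particular $Q^n_{\lam_j,z_0}(0) = O(|\lam_j|^{1/d})$. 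On the other hand, if $\lam_\infty(z_0) \neq 0$, the second assertion of Lemma \ref{lemma_claim_12} applies: since $0 \in K_{z_0}(f_{\lam_j})$ (being a point whose orbit eventually becomes periodic, hence bounded — here one uses that $(z_0,0)$ lands on a periodic point, so $0 \in K_{z_0}$), we get $|0| \asymp |\lam_j|^{1/d}$, which is absurd as $|\lam_j| \to \infty$. Therefore $\lam_\infty(z_0) = 0$, i.e.\ $[\lam_\infty] \in E_{z_0}$. Actually, to make the quantitative argument cleaner I would directly iterate: apply Lemma \ref{lemma_claim_12} first to locate $0 \in K_{z_0}$, deduce $|a^{(j)}(z_0)| \lesssim |\lam_j|$ is impossible to beat, or more simply observe that $0 \in K_{z_0}(f_{\lam_j})$ together with the second part of Lemma \ref{lemma_claim_12} (applied with $w_j = 0$) immediately yields the contradiction when $\lam_\infty(z_0) \neq 0$.

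For the reverse inclusion, I need to show that every point of $E_{z_0}$ is actually attained as an accumulation point of $M_{z_0,n,m}$ — this is where the genuine work lies. The set $M_{z_0,n,m}$ is a (nonempty, by hypothesis) algebraic hypersurface in $U_d \cong \C^{d+1}$; its closure $\overline{M_{z_0,n,m}}$ in $\P^{d+1}$ is an algebraic hypersurface, and its intersection with $\P^d_\infty$ is a nonempty algebraic hypersurface of $\P^d_\infty$ (nonempty because a hypersurface in $\C^{d+1}$ cannot have compact closure). By the first inclusion, this intersection is contained in $E_{z_0}$. But $E_{z_0} = \{[\lam] : \lam(z_0)=0\}$ is the zero locus in $\P^d_\infty$ of a single linear form (the evaluation $\lam \mapsto a(z_0) = \sum a_i z_0^i$), hence $E_{z_0}$ is an irreducible hyperplane of $\P^d_\infty$. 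An algebraic hypersurface contained in an irreducible hypersurface of the same dimension must equal it (up to the issue of whether the containing hypersurface could itself be reducible — here it is irreducible, being a hyperplane). Therefore $\overline{M_{z_0,n,m}} \cap \P^d_\infty = E_{z_0}$, which is exactly the claim.

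The main obstacle, and the point requiring the most care, is the dimension/degree bookkeeping in the second inclusion: one must be sure that $\overline{M_{z_0,n,m}} \cap \P^d_\infty$ is genuinely a hypersurface of $\P^d_\infty$ (i.e.\ has pure codimension $1$ there, with no lower-dimensional components to worry about) so that the "contained in an irreducible hypersurface of the same dimension implies equal" step is legitimate. This follows from the standard fact that the intersection of the projective closure of an affine hypersurface in $\C^{N}$ with the hyperplane at infinity $\P^{N-1}$ is either empty or a hypersurface of $\P^{N-1}$; since $M_{z_0,n,m}$ is an affine hypersurface that is unbounded (it is a nonempty hypersurface, so not compact), the intersection at infinity is nonempty, hence a hypersurface, hence all of $E_{z_0}$ by irreducibility. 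One should also double-check at the start that the relation \eqref{eq_mis_ud} indeed forces $0 \in K_{z_0}(f_\lam)$ for $\lam \in M_{z_0,n,m}$, which is immediate since $Q^n_{\lam,z_0}(0)$ periodic implies the forward orbit of $0$ under the non-autonomous system $\{Q^k_{\lam,z_0}\}$ is finite, hence bounded.
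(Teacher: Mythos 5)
Your proof is correct, and for the forward inclusion (accumulation set $\subseteq E_{z_0}$) it matches the paper's reasoning exactly: apply Lemma~\ref{lemma_claim_12} with $w_j = 0$, which lies in $K_{z_0}(f_{\lam_j})$ since the Misiurewicz relation makes the orbit of $(z_0,0)$ eventually periodic, and conclude $\lam_\infty(z_0)=0$.

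For the reverse inclusion your route is genuinely different from the paper's. The paper fixes a point $[\lam_\infty]\in E_{z_0}$, chooses a $2$-dimensional affine subfamily $W\subset U_d$ whose line at infinity meets $E_{z_0}$ only at $[\lam_\infty]$, and argues that the restriction $M_{z_0,n,m}\cap W$, being a non-compact affine curve, must accumulate somewhere on that line at infinity — which by the first inclusion can only be $[\lam_\infty]$. You instead argue globally: $\overline{M_{z_0,n,m}}\cap\P^d_\infty$ is the zero locus in $\P^d_\infty$ of the top-degree homogeneous part of the defining polynomial, hence a nonempty pure-codimension-one algebraic subset of $\P^d_\infty$; since the first inclusion places it inside the hyperplane $E_{z_0}$, and a hyperplane is irreducible of the same dimension, containment forces equality. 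Both arguments are elementary algebraic geometry at infinity, but your version is slightly more robust: it avoids having to justify that the slice $M_{z_0,n,m}\cap W$ is nonempty for the specific $W$ chosen, a point the paper passes over. Your identification of $E_{z_0}$ as a (linear, hence irreducible) hyperplane, which you use crucially, is a feature of the argument that the paper does not need to invoke explicitly. The only imprecision in your write-up is cosmetic: the phrase ``cannot have compact closure'' should read ``is not bounded'', since what you mean is that the affine hypersurface is unbounded in $\C^{d+1}$ and therefore its closure in $\P^{d+1}$ meets $\P^d_\infty$.
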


\begin{proof}
It follows from Lemma \ref{lemma_claim_12} that the accumulation of $M_{z_0,n,m}$
on $\P^d_\infty$
is included in $E_{z_0}$. On the other hand, the restriction of $M_{z_0,n,m}$
to any $2$-dimensional subfamily of $U_d$ cannot be compact.
By considering, for every point in $E_{z_0}$, an
 affine $2$-dimensional
 subfamily whose line of intersection with $\P^d_\infty$
meets $E_{z_0}$ only in the given point,  we see
that the inclusion is actually an equality.
\end{proof}

\begin{lemma}\label{lemma_dir_inv}
For any non-empty
Misiurewicz hypersurface $M_{z_0,n,m}\subset U_d$ of the form \eqref{eq_mis_ud},
the
 non-vertical eigenspace  
of $(df_{\lam}^m)_{(z_1, w_1(\lam))}$ 
at $(z_1, w_1(\lam)):=  (p^{n} (z_0), Q^{n}_{\lam, z_0} (0))$
is generated by the vector
\[v_{\lam}:=
\left(
1, 
\frac{ 
 \frac{
\partial Q_{\lam,z}^m(w)}{\partial z}\big|_{(z,w)=(z_1, w_1(\lam))} }{ \frac{\partial Q_{\lam,z}^m (w)}{\partial w}\big|_{(z,w)= (z_1,w_1(\lam))} - (p^m)'(z_1)}
\right).\]
In particular, 
if
$z_0 \notin \{p^{n}(z_0), \dots, p^{n+m-1} (z_0)\}$,
given 
$\lam_\infty$ such that
$\lam'_\infty (p^i (z_0) )\neq 0$
for all $n\leq i <n+m$
and
a sequence $\lam_j \in M_{z_0, n,m}$ with
 $|\lam_j|\to \infty$ and $[\lam_j] \to [\lam_\infty]$,
  the second component
$v_{\lam_j}^{(2)}$  
   of $v_{\lam_j}$ as above
 satisfies
 \begin{equation}\label{eq_dir_inv}
|v^{(2)}_{\lam_j}| = \mathcal{O} (|\lam_j|^{1/d})
\mbox{ as } j\to \infty.
 \end{equation}
\end{lemma}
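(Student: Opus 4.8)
The plan is to split the statement into two parts: first, a direct linear-algebra computation identifying the non-vertical eigenvector $v_\lam$; second, an asymptotic estimate on its second component along the sequence $\lam_j$. For the first part, I would write the differential $df_\lam^m$ at the periodic point $(z_1,w_1(\lam))$ in block form. Since $f_\lam$ is a skew product, $(df_\lam^m)_{(z_1,w_1(\lam))}$ is lower triangular:
\[
(df_\lam^m)_{(z_1,w_1(\lam))} = \begin{pmatrix} (p^m)'(z_1) & 0 \\[1mm] \partial_z Q^m_{\lam,z}(w)\big|_{(z_1,w_1(\lam))} & \partial_w Q^m_{\lam,z}(w)\big|_{(z_1,w_1(\lam))} \end{pmatrix}.
\]
Its eigenvalues are the diagonal entries $(p^m)'(z_1)$ (the vertical direction eigenvalue is $\partial_w Q^m$, with eigenvector $(0,1)$, the vertical fibre direction). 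So the ``non-vertical'' eigenvalue is $(p^m)'(z_1)$, and solving $(df_\lam^m - (p^m)'(z_1)\mathrm{Id})v = 0$ for a vector $v = (1, v^{(2)})$ forces
\[
\partial_z Q^m_{\lam,z}(w)\big|_{(z_1,w_1(\lam))} + \big(\partial_w Q^m_{\lam,z}(w)\big|_{(z_1,w_1(\lam))} - (p^m)'(z_1)\big) v^{(2)} = 0,
\]
which gives exactly the stated formula for $v_\lam$ (the denominator is nonzero because the two eigenvalues are distinct: $(z_1,w_1(\lam))$ is repelling, so by the product structure both $|(p^m)'(z_1)|>1$ and $|\partial_w Q^m|>1$, but they need not be equal — in fact if they were equal one would need a separate argument, so I should note that along $M_{z_0,n,m}$ the critical orbit lands on a repelling point of $Q^{n_0}$-type and the two multipliers differ; alternatively one simply observes that the formula is the definition of $v_\lam$ wherever the denominator is nonzero and this holds generically, in particular on a dense open subset of $M_{z_0,n,m}$, which suffices for the current argument).

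For the second part I would estimate numerator and denominator of $v^{(2)}_{\lam_j}$ separately as $|\lam_j|\to\infty$. The denominator $\partial_w Q^m_{\lam_j,z}(w)|_{(z_1,w_1(\lam_j))} - (p^m)'(z_1)$: here $(p^m)'(z_1)$ is a fixed constant independent of $\lam$, while $\partial_w Q^m = \prod_{i=0}^{m-1} q'_{\lam_j, p^i(z_1)}(w_i)$ where $w_i = Q^i_{\lam_j,z_1}(w_1(\lam_j))$ and $q'_{\lam,z}(w) = d\,w^{d-1}$ in the unicritical family. By Lemma \ref{lemma_claim_12}, each $|w_i| = \mathcal{O}(|\lam_j|^{1/d})$, so $|\partial_w Q^m| = \mathcal{O}(|\lam_j|^{(d-1)m/d})$, which in particular tends to infinity; hence the denominator has the same order of magnitude, $|\partial_w Q^m - (p^m)'(z_1)| \asymp |\lam_j|^{(d-1)m/d}$ (using that the subtracted term is bounded). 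For the numerator $\partial_z Q^m_{\lam_j,z}(w)|_{(z_1,w_1(\lam_j))}$, I would expand by the chain rule: writing $Q^m_{\lam,z} = q_{\lam,p^{m-1}(z)}\circ\cdots\circ q_{\lam,z}$, the $z$-derivative is a sum of $m$ terms, each of the form (product of $\partial_w q$ factors along part of the orbit) times one factor $\partial_z q_{\lam, p^i(z)}(w_i) = (a^{(j)})'(p^i(z_1))$, times $(p^i)'(z_1)$ from differentiating the base iterate. The crucial point is that the hypothesis $\lam'_\infty(p^i(z_0))\ne 0$ for $n\le i < n+m$ (note $p^i(z_0) = p^{i-n}(z_1)$, so this ranges over $p^0(z_1),\dots,p^{m-1}(z_1)$) guarantees $|(a^{(j)})'(p^i(z_1))| \asymp |\lam_j|$, so each such term is $\mathcal{O}(|\lam_j|)\cdot \mathcal{O}(|\lam_j|^{(d-1)(m-1)/d})$ in the worst case (when the orbit-product part has $m-1$ factors). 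Thus the numerator is $\mathcal{O}(|\lam_j|^{1 + (d-1)(m-1)/d})$, and
\[
|v^{(2)}_{\lam_j}| = \frac{\mathcal{O}(|\lam_j|^{1 + (d-1)(m-1)/d})}{\asymp |\lam_j|^{(d-1)m/d}} = \mathcal{O}\big(|\lam_j|^{1 + (d-1)(m-1)/d - (d-1)m/d}\big) = \mathcal{O}\big(|\lam_j|^{1 - (d-1)/d}\big) = \mathcal{O}\big(|\lam_j|^{1/d}\big),
\]
which is the claimed bound \eqref{eq_dir_inv}.

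I expect the main obstacle to be the bookkeeping in the chain-rule expansion of $\partial_z Q^m$: one must check that every one of the $m$ summands — and in particular the ``extreme'' ones where the $\partial_w$-product is longest — obeys the bound $\mathcal{O}(|\lam_j|^{1+(d-1)(m-1)/d})$, and that no term is actually larger. The role of the hypothesis $z_0\notin\{p^n(z_0),\dots,p^{n+m-1}(z_0)\}$ is to ensure that the critical fibre $\{z=z_0\}$ is disjoint from the periodic orbit $\{z_1,\dots,p^{m-1}(z_1)\}$, so that the perturbation of the critical relation can be analyzed cleanly and in particular $w_1(\lam) = Q^n_{\lam,z_0}(0)$ does indeed have the size $\mathcal{O}(|\lam|^{1/d})$ coming from Lemma \ref{lemma_claim_12} applied on $J_p$ (one should double-check the orbit stays inside the relevant filled-in sets; since $(z_1,w_1(\lam))$ is periodic, $w_1(\lam)\in K_{z_1}(f_\lam)$ automatically, and the same for all $w_i$, so Lemma \ref{lemma_claim_12} applies to each). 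Once these size estimates are in place, the final inequality is just arithmetic of exponents, and the resulting exponent $1/d$ is exactly the power appearing in \eqref{eq_dir_inv}.
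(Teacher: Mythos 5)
Your proposal is correct and follows essentially the same route as the paper: write $(df_\lam^m)_{(z_1,w_1(\lam))}$ in lower-triangular block form to read off the eigenvector formula, then expand $\partial_z Q^m$ and $\partial_w Q^m$ along the orbit, apply Lemma~\ref{lemma_claim_12} to get $|Q^\ell_{\lam_j,z_1}(w_1(\lam_j))| \asymp |\lam_j|^{1/d}$ and $|(a^{(j)})'(p^i(z_1))| \asymp |\lam_j|$, and do the arithmetic of exponents. Two small remarks: (i) you wrote ``$|w_i| = \mathcal O(|\lam_j|^{1/d})$'' for the orbit points, but to control the denominator from below you actually need the two-sided bound $\asymp$, which is what Lemma~\ref{lemma_claim_12} gives and what you implicitly use when you then write $|\partial_w Q^m - (p^m)'(z_1)| \asymp |\lam_j|^{m(d-1)/d}$; (ii) your interpretation of the hypothesis $z_0 \notin \{p^n(z_0),\dots,p^{n+m-1}(z_0)\}$ is in the right direction but the precise point is that, by Lemma~\ref{lemma_ez}, the accumulation of $M_{z_0,n,m}$ at infinity forces $\lam_\infty(z_0)=0$, and the hypothesis ensures this is compatible with the non-vanishing conditions at the fibres $p^i(z_1)$ needed to invoke Lemma~\ref{lemma_claim_12} along the periodic orbit.
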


\begin{proof}
We have
\[
(df^m_\lam)_{z_1,w_1(\lam)} = 
\left( \begin{array}{cc}
(p^m)' (z_0)& 0\\
\frac{\partial Q_{\lam,z}^m(w)}{\partial z}\big|_{(z,w)=(z_1,w_1(\lam))}  & \frac{\partial Q_{\lam,z}^m(w)}{\partial w}\big|_{(z,w)=(z_1,w_1(\lam))}
\end{array}\right),
\]
from which we deduce the first assertion.
A direct computation shows that, for every $\lam \in M_{z_0,n,m}$,
\begin{equation}\label{eq_dz}
\begin{aligned}
\frac{\partial Q_{\lam,z}^m(w)}{\partial z}\Big|_{(z,w)=(z_1,w_1(\lam))}
&  =
\sum_{i=0}^{m-1}
a'(p^i (z))
\prod_{\ell=i+1}^{m-1} q'_{p^\ell (z_1)}  (Q_{\lam,z_1}^\ell (w_1(\lam)))\\
&=
\sum_{i=0}^{m-1}
a'(p^i (z_1))
\prod_{\ell=i+1}^{m-1} d \left[ Q_{\lam,z_1}^\ell (w_1(\lam))\right]^{d-1}
\end{aligned}
\end{equation}
and
\[
\frac{\partial Q_{\lam,z}^m(w)}{\partial w}\Big|_{(z,w)=(z_1,w_1(\lam))}  =
\prod_{\ell=0}^{m-1} q'_{p^\ell (z_1)}  (Q_{\lam,z_1}^j (w_1(\lam)))
=
\prod_{\ell=0}^{m-1} d \left[Q_{\lam,z_1}^\ell (w_1(\lam))\right]^{d-1}
\]
where $a$  is the polynomial associated to $\lam$.

Let us now consider the evaluations of the expressions above at a
sequence $\lam_j$ as in
the statement,
and
let us denote 
by $a^{(j)}$ the polynomial associated to $\lam_j$.
Since
$\lam'_\infty (p^i (z_0) )\neq 0$ for all 
$n\leq i < n+m$,
 we have $| (a^{(j)})' (p^i (z_0))|\asymp |\lam_j|$ as $j\to \infty$.
Moreover, 
by Lemma \ref{lemma_claim_12}, for all $0\leq \ell < m$ we have $|Q^\ell_{z_1,\lam_j} (w_1(\lam_j))|\asymp |\lam_j|^{1/d}$.
Hence,
both the expressions
above diverge as $|\lam_j|\to \infty$ with $[\lam_j] \to [\lam_\infty]$,
and 
the largest term  in the sum in the  last term of  
\eqref{eq_dz} is that corresponding to $i=0$.
 Hence, as $j\to \infty$,
we have
\[
\begin{aligned}
|v^{(2)}_{\lam_j}|
& \asymp 
\frac{
\abs{ (a^{(j)})'(z_1)\prod_{\ell=1}^{m-1} d (Q_{\lam_j,z_1}^\ell (w_1(\lam_j)))^{d-1} }
}{
\abs{\prod_{\ell=0}^{m-1} d (Q_{\lam_j,z_1}^\ell (w_1(\lam_j)))^{d-1}}} 
= \frac{|(a^{(j)})'(z_1)|}{ d  (w_1(\lam_j) )^{d-1} }\\
& = \mathcal{O}
\left( \frac{|\lam_j|}{|\lam_j|^{(d-1)/d}} \right) = \mathcal{O} (|\lam_j|^{1/d}),
\end{aligned}\]
where in the last steps we used again Lemma \ref{lemma_claim_12}.
\end{proof}

\begin{lemma}\label{lemma_dir_pc}
For any non-empty Misiurewicz hypersurface $M_{z_0,n,m}\subset U_d$ of the form \eqref{eq_mis_ud},
 the image of $(df_\lam^n)_{(z_0,0)}$
is generated by the vector
\[u_{\lam}: = \left( 1, 
\frac{ 
\frac{\partial Q_{\lam,z}^n(w)}{\partial z}\big|_{(z,w)=(z_0,0)} 
}{(p^n)'(z_0)}
 \right).
 \]
 In particular, 
 given 
 $\lam_\infty$ such
  $\lam'_\infty (p^i(z_0))\neq 0$ for $0\leq i \leq n-1$
  and
 a sequence $\lam_j \in M_{z_0, n,m}$
 with
 $|\lam_j|\to \infty$ and
  $[\lam_j]\to [\lam_\infty]$,
  the second component 
  $u^{(2)}_{\lam_j}$
  of $u_{\lam_j}$ as above
 satisfies
 \begin{equation}\label{eq_dir_pc}
|u^{(2)}_{\lam_j}| 
\asymp |\lam_j|^{\frac{n(d-1)+1}{d}}
\mbox{ as } j\to \infty.
 \end{equation}
\end{lemma}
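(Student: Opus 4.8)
The plan is to imitate the proof of Lemma~\ref{lemma_dir_inv}, the only essential difference being that here the relevant ``horizontal'' derivative is that of $Q^n_{\lam,z}$ at the \emph{critical} point $(z_0,0)$, so its leading term picks up one extra factor of $|\lam|^{1/d}$ coming from the fact that at $w=0$ the summand indexed by $i=0$ contains $a'(z_0)$ but \emph{not} the factor $d w^{d-1}$ evaluated at the critical point. First I would record the matrix
\[
(df^n_\lam)_{(z_0,0)}=
\begin{pmatrix}
(p^n)'(z_0) & 0\\[2pt]
\dfrac{\partial Q^n_{\lam,z}(w)}{\partial z}\Big|_{(z,w)=(z_0,0)} & \dfrac{\partial Q^n_{\lam,z}(w)}{\partial w}\Big|_{(z,w)=(z_0,0)}
\end{pmatrix},
\]
and observe that since $(z_0,0)$ is critical (the factor $q'_{z_0}(0)=d\cdot 0^{d-1}=0$ appears in the chain rule for $\partial_w Q^n$), the bottom-right entry vanishes. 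Hence the image of this linear map is the line spanned by $(1,\,\partial_z Q^n_{\lam,z}(w)|_{(z_0,0)}/(p^n)'(z_0))$, which is exactly $u_\lam$; this gives the first assertion.

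Next I would compute $\partial_z Q^n_{\lam,z}(w)|_{(z,w)=(z_0,0)}$ by the same chain-rule expansion as in \eqref{eq_dz}, namely
\[
\frac{\partial Q^n_{\lam,z}(w)}{\partial z}\Big|_{(z,w)=(z_0,0)}
=\sum_{i=0}^{n-1} a'(p^i(z_0))\prod_{\ell=i+1}^{n-1} d\big[Q^\ell_{\lam,z_0}(0)\big]^{d-1},
\]
where now the $i=0$ term is $a'(z_0)\prod_{\ell=1}^{n-1} d\,[Q^\ell_{\lam,z_0}(0)]^{d-1}$, with no factor coming from $\ell=0$ (that factor would be $d\cdot 0^{d-1}=0$, but it is simply absent from this term). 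Evaluating along a sequence $\lam_j$ with $|\lam_j|\to\infty$, $[\lam_j]\to[\lam_\infty]$ and $\lam'_\infty(p^i(z_0))\neq 0$ for $0\le i\le n-1$: by the hypothesis we have $|(a^{(j)})'(p^i(z_0))|\asymp|\lam_j|$ for each such $i$, and by Lemma~\ref{lemma_claim_12} we have $|Q^\ell_{\lam_j,z_0}(0)|\asymp|\lam_j|^{1/d}$ for $1\le \ell\le n-1$ (note $Q^\ell_{\lam_j,z_0}(0)\in K_{p^\ell(z_0)}(f_{\lam_j})$ because $\lam_j\in M_{z_0,n,m}$ forces the critical orbit to be bounded). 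Therefore each summand has size $\asymp|\lam_j|\cdot|\lam_j|^{(n-1-i)(d-1)/d}$, so the sum is dominated by the $i=0$ term, of size $\asymp|\lam_j|\cdot|\lam_j|^{(n-1)(d-1)/d}=|\lam_j|^{(n(d-1)+1)/d}$. Dividing by $(p^n)'(z_0)$, a nonzero constant independent of $j$, yields $|u^{(2)}_{\lam_j}|\asymp|\lam_j|^{(n(d-1)+1)/d}$, which is \eqref{eq_dir_pc}.

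The only point requiring a little care — and the analogue of the one genuinely substantive step in Lemma~\ref{lemma_dir_inv} — is justifying that the $i=0$ summand strictly dominates, i.e.\ that there is no cancellation among the $n$ terms and that the lower-index terms are of strictly smaller order; this follows because the exponents $(n-1-i)(d-1)/d$ are strictly decreasing in $i$ while the prefactors $|(a^{(j)})'(p^i(z_0))|$ all have the \emph{same} order $|\lam_j|$, so after factoring out $|\lam_j|^{(n-1)(d-1)/d}$ the $i=0$ term tends to a nonzero limit (in the projectivized picture) while all others tend to $0$. I do not expect any real obstacle here; it is a routine asymptotic bookkeeping argument entirely parallel to the one already carried out for $v^{(2)}_{\lam_j}$, with the single arithmetic change that the absence of the vanishing factor $q'_{z_0}(0)$ raises the exponent from $1/d$ to $(n(d-1)+1)/d$.
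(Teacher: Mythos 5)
Your proof is correct and follows essentially the same route as the paper's: write down the differential (with the bottom-right entry vanishing because $(z_0,0)$ is critical), expand $\partial_z Q^n_{\lam,z}(w)\big|_{(z_0,0)}$ by the chain rule, and then use the hypothesis together with Lemma~\ref{lemma_claim_12} to see that the $i=0$ summand, of size $\asymp |\lam_j|^{(n(d-1)+1)/d}$, dominates. The small additions you make (spelling out why $\partial_w Q^n$ vanishes at the critical point, and why $Q^\ell_{\lam_j,z_0}(0)$ lies in the filled Julia set) are only expository and do not change the argument.
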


\begin{proof}
Since
\[
(df^n_\lam)_{z_0,0} = 
\left( \begin{array}{cc}
(p^n)' (z_0)& 0\\
\frac{\partial Q_{\lam,z}^n(w)}{\partial z}\big|_{(z,w)=(z_0,0)}  & 0
\end{array}\right),
\]
the first part of the statement is immediate.
A computation as in Lemma \ref{lemma_dir_pc} 
gives
\[
\frac{\partial Q_{\lam,z}^n(w)}{\partial z}\Big|_{(z,w)=(z_0,0)} 
=
\sum_{i=0}^{n-1}
a'(p^i (z_0))
\prod_{\ell=i+1}^{n-1} d (Q^\ell_{\lam,z_0} (0))^{d-1}
\]
(where again $a$ is the polynomial associated to $\lambda$)
and, by Lemma \ref{lemma_claim_12}, the above expression diverges as $j\to \infty$
when evaluated at $\lam_j$ as in the statement. Moreover, as $j\to \infty$,
denoting by $a^{(j)}$ the polynomial
associated to $\lam_j$,
 we have
\[\begin{aligned}
|u^{2}_{\lam_j}|\asymp
\left|
\frac{\partial Q_{\lam_j,z}^n(w)}{\partial z}\Big|_{(z,w)=(z_0,0)} \right|
& \asymp
\Big| (a^{(j)})'(z_0)
\prod_{\ell=1}^{n-1} d (Q_{\lam_j,z_0}^\ell (0))^{d-1}\Big|
\\
& \asymp |\lam_j|^{1+ \frac{(n-1)(d-1)}{d}}= |\lam_j|^{\frac{n(d-1)+1}{d}},
\end{aligned}
\]
where we used the facts that $|(a^{(j)})'(z_0)|\neq 0$ for sufficiently large $j$, and hence $|(a^{(j)})'(z_0)|\asymp |\lam_j|$,
and that $Q^\ell_{\lam_j,z_0}(0) \in K(f_{\lam_j})$, and hence $|Q^\ell_{\lam_j,z_0}(0)|\asymp |\lam_j|^{1/d}$ by Lemma \ref{lemma_claim_12}.
\end{proof}

\subsection{Higher bifurcations currents and loci}\label{s:decomposition}

Higher bifurcation currents
for families of polynomials (or rational maps)
in one variable 
were introduced in \cite{bassanelli2007bifurcation},
 see also \cite{dujardin2008distribution},
with the aim of understanding the 
loci where simultaneous and independent bifurcations happen, from an analytical
point of view. Since the Lyapunov function is continuous with respect to
the parameters 
\cite{dinh2010dynamics}, it is indeed
meaningful
 to
consider the self-intersections $\tbif^k:= \tbif^{\wedge k}$
of the bifurcation current, for every $k$ up to the dimension of the parameter space. 
The measure obtained by taking
the maximal power is usually referred to as the \emph{bifurcation measure}.

While in dimension one it is quite natural
to associate a  geometric meaning to
$\Supp (\tbif^k)$ (as, for instance, the points where $k$ independent Misiurewicz
relations happens, in a quite precise sense, see, e.g., \cite{dujardin2011bifurcation}), 
in higher dimensions 
the critical set is of positive dimension 
and thus this interpretation is far less clear.

The following result gives a first step in the interpretation of the higher bifurcations
as average of non-autonomous counterparts of the
classical one-dimensional objects,
valid in any family of polynomial skew products
over a fixed base $p$. 
An interpretation of the non-autonomous factors will 
be the object
 of 
 Section \ref{s:higher_current}.
The case of general polynomial skew products is completely analogous, 
and the following should be read as a decomposition for the vertical bifurcation $T_v^k = (dd^c L_v)^k$, 
see Section \ref{ss:notations}.
 Given $\underline z:=(z_1, \dots, z_k) \in J_p^k$, we denote by 
$T_{\underline z}$ the
current
$T_{\underline z}= {\tbif}_{z_1} \wedge \dots \wedge {\tbif}_{z_k}$, where
for every $z\in J_p$ we set
${\tbif}_{z} := dd^c_\lam  \Big(\sum_{w  : q'_{\lam,z}(w)=0}G_{\lam} (z,w)\Big)$, see
\cite[\S 2.4]{astorg2018bifurcations}.

\begin{prop}\label{prop_decomposition}
Let $(f_\lam)_{\lam \in M}$ be a family of polynomial
skew products over a fixed base $p$.  
Then
\[
\tbif^k  = \int_{J_p^k} T_{\underline z} \mu^{\otimes k}
\quad \mbox{ and } \quad
\Supp (\tbif^k) = \overline {\cup_{\underline z} 
\Supp T_{\underline z}}.
\]
\end{prop}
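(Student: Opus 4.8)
The plan is to prove the statement for the vertical bifurcation currents $T_v^k = (dd^c L_v)^k$; since the base $p$ is fixed, $L_p$ is constant in $\lam$, so $\tbif = T_v$ and the two agree. Write $\mu := \mu_p$ for the measure of maximal entropy of $p$ and $g_z(\lam) := \sum_{w\,:\,q'_{\lam,z}(w)=0} G_\lam(z,w)$, so that $L_v = \log d + \int_{J_p} g_z\,\mu(z)$ and $\tbif_z = dd^c g_z$. I will use the following facts, all contained in or immediate from \cite[\S 2.4]{astorg2018bifurcations}: for each $z \in J_p$ the function $g_z$ is continuous and plurisubharmonic on $M$; the map $z \mapsto g_z$ is continuous, locally uniformly on $M$ (a consequence of the joint continuity of $(\lam,z,w)\mapsto G_\lam(z,w)$ and of the critical set of $q_{\lam,z}$ in $(\lam,z)$); and $\tbif = \int_{J_p}\tbif_z\,\mu(z)$, which is the case $k=1$ of the first identity and follows from the formula for $L_v$ by commuting $dd^c$ with the weak integral. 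In particular each $\tbif_z$ is a positive closed $(1,1)$-current with continuous potential, so by Bedford--Taylor theory every $T_{\underline z}=\tbif_{z_1}\wedge\dots\wedge\tbif_{z_k}$ is a well-defined positive closed $(k,k)$-current, and so is $\tbif^k$.

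For the first identity I argue by induction on $k$, the case $k=1$ being recalled above. Assuming $\tbif^{k-1}=\int_{J_p^{k-1}}T_{\underline z'}\,\mu^{\otimes(k-1)}$, and using that $L_v$ is continuous and $\tbif^{k-1}$ positive closed, I write $\tbif^k = dd^c\big(L_v\,\tbif^{k-1}\big) = dd^c\big(\int_{J_p} g_z\,\mu(z)\cdot\tbif^{k-1}\big)$, the additive constant $\log d$ contributing nothing. Then, using Fubini for the finite measures $\mu$ and $\mu^{\otimes(k-1)}$ together with the continuity of $g_z$ and the weak continuity of $dd^c$, I move successively the multiplication by $g_z$, then $dd^c$, then the exterior product with $\tbif_z$, past the weak integrals; at each stage the only non-formal input is the Bedford--Taylor identity $dd^c(u\,S)=dd^c u\wedge S$ for $u$ continuous plurisubharmonic and $S$ positive closed. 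This yields
\[
\tbif^k = \int_{J_p}\tbif_z\wedge\tbif^{k-1}\,\mu(z)
= \int_{J_p}\int_{J_p^{k-1}}\tbif_z\wedge T_{\underline z'}\,\mu^{\otimes(k-1)}\,\mu(z)
= \int_{J_p^k}T_{\underline z}\,\mu^{\otimes k},
\]
the last equality using the commutativity of the wedge product of $(1,1)$-currents and Fubini for $\mu^{\otimes k}$.

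For the supports, one inclusion is immediate from positivity: if $V$ is an open set disjoint from $\bigcup_{\underline z\in J_p^k}\Supp T_{\underline z}$, then $T_{\underline z}|_V=0$ for all $\underline z$, hence $\langle\tbif^k,\phi\rangle=\int_{J_p^k}\langle T_{\underline z},\phi\rangle\,\mu^{\otimes k}=0$ for every test form $\phi$ supported in $V$, so $\tbif^k|_V=0$; thus $\Supp\tbif^k\subseteq\overline{\bigcup_{\underline z}\Supp T_{\underline z}}$. The reverse inclusion rests on the claim that $\underline z\mapsto T_{\underline z}$ is weakly continuous on $J_p^k$: if $\underline z^{(j)}\to\underline z$ in $J_p^k$ then $g_{z_i^{(j)}}\to g_{z_i}$ locally uniformly for each $i$, and the Bedford--Taylor continuity theorem for wedge products of continuous plurisubharmonic functions gives $T_{\underline z^{(j)}}\to T_{\underline z}$, so in particular $\underline z\mapsto\langle T_{\underline z},\phi\rangle$ is continuous for every test form $\phi$. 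Granting this, fix $\underline z_0\in J_p^k$, $\lam_0\in\Supp T_{\underline z_0}$ and a neighbourhood $V$ of $\lam_0$; pick a positive test form $\phi$ supported in $V$ with $\langle T_{\underline z_0},\phi\rangle>0$ (possible since $T_{\underline z_0}$ is a nonzero positive current on $V$), so that $\langle T_{\underline z},\phi\rangle>0$ on some neighbourhood $W$ of $\underline z_0$ in $J_p^k$. Since $\Supp\mu^{\otimes k}=(\Supp\mu)^k=J_p^k$, we get $\mu^{\otimes k}(W)>0$, whence $\langle\tbif^k,\phi\rangle = \int_{J_p^k}\langle T_{\underline z},\phi\rangle\,\mu^{\otimes k}\ge\int_W\langle T_{\underline z},\phi\rangle\,\mu^{\otimes k}>0$, so $\lam_0\in\Supp\tbif^k$. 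Taking the union over $\underline z_0$ and closing gives $\overline{\bigcup_{\underline z}\Supp T_{\underline z}}\subseteq\Supp\tbif^k$.

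The step requiring genuine care, rather than a deep obstacle, is the repeated commutation of $dd^c$ and of the exterior product with $\tbif_z$ past the weak integrals $\int\cdot\,\mu^{\otimes j}$ in the induction: this is Fubini bookkeeping, but it uses essentially the continuity of the potentials $g_z$ so that the Bedford--Taylor products are defined and behave functorially, and the analogous manipulations would fail for arbitrary positive closed currents. The weak continuity of $\underline z\mapsto T_{\underline z}$, which powers the non-trivial support inclusion, relies on the same input together with the Bedford--Taylor continuity theorem.
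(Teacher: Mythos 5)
Your proof is correct and follows essentially the same route as the paper: you establish the integral decomposition of $\tbif^k$ by commuting $dd^c$, multiplication by the continuous potentials $g_z$, and the Bedford--Taylor wedge product past the weak integrals, and then you deduce the support statement from the weak continuity of $\underline z \mapsto T_{\underline z}$ combined with $\Supp\mu^{\otimes k}=J_p^k$. The paper packages the support argument as a general lemma about positive closed currents depending continuously on a compact metric parameter space (Lemma~\ref{lemma_continous_family_currents_metric_space}), whose proof is exactly the two-inclusion reasoning you give.
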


\begin{proof}
The case $k=1$ 
follows from 
the explicit
formula for $L_v$
 in \eqref{eq_lyapunov}.
The first formula
in the statement
 is a consequence of the case $k=1$ and the
continuity of the 
potentials of the bifurcation currents $\tbifz$.
The 
continuity of the potentials (in both $z$ and the parameter) 
also implies
that
the currents $T_{\underline z}$
are continuous in $\underline z \in J_p^k$. We can thus apply the general Lemma \ref{lemma_continous_family_currents_metric_space}
below
to the family of 
currents $R_a = T_{\underline z}$ and $a= \underline z \in J_p^k=A$. 
This concludes the proof.
\end{proof}

\begin{lemma}\label{lemma_continous_family_currents_metric_space}
Let $A$ be a compact metric space, $\nu$ a positive measure on $A$ and $R_a$
 a family of positive
closed currents on $\C^N$ depending continuously on $a\in A$. Set $R:=\int_{A} R_{a}\nu (a)$.
Then
\begin{enumerate}
\item the support of $R_a$ depends lower semicontinuously from $a$ (in the Hausdorff topology);
\item the support of $R$ is included in $\overline {\cup_a \Supp R_a }$;
\item for every $a\in \Supp \nu$, we have $\Supp R_a\subseteq \Supp R$.
\end{enumerate}
\end{lemma}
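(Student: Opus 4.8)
The plan is to treat the three statements in order, as they build on one another. Throughout, I write $|T|$ for the trace measure of a positive closed current $T$, and recall that ``$R_a$ depends continuously on $a$'' should be understood in the sense that $a\mapsto \langle R_a,\varphi\rangle$ is continuous for every test form $\varphi$, equivalently that $R_{a_j}\to R_a$ weakly whenever $a_j\to a$; since all the $R_a$ have the same bidegree and live on $\C^N$, and the mass $\|R_a\|_K$ on a compact $K$ is then automatically locally bounded (it is upper semicontinuous as a decreasing limit, or one simply notes $A$ compact forces a uniform bound via a standard Baire/compactness argument), the integral $R=\int_A R_a\,\nu(a)$ is a well-defined positive closed current on $\C^N$.

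For item (1), I would argue by contradiction with the definition of lower semicontinuity of the support in the Hausdorff sense: I must show that if $x_0\in\Supp R_{a_0}$ and $a_j\to a_0$, then there exist $x_j\in\Supp R_{a_j}$ with $x_j\to x_0$. Equivalently, for every $\varepsilon>0$ there is $J$ such that $\Supp R_{a_j}\cap B(x_0,\varepsilon)\neq\emptyset$ for $j\geq J$. Suppose not: then along a subsequence $R_{a_j}$ vanishes on $B(x_0,\varepsilon)$, i.e. $\langle R_{a_j},\varphi\rangle=0$ for every test form $\varphi$ supported in $B(x_0,\varepsilon)$. Passing to the limit using continuity gives $\langle R_{a_0},\varphi\rangle=0$ for all such $\varphi$, so $R_{a_0}$ vanishes on $B(x_0,\varepsilon)$, contradicting $x_0\in\Supp R_{a_0}$. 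This is the cleanest of the three and essentially just unwinds definitions.

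For item (2), let $\Omega:=\C^N\setminus\overline{\cup_a\Supp R_a}$; I must show $R\equiv 0$ on $\Omega$. Fix a test form $\varphi$ with compact support $K\subset\Omega$. For each $a\in A$ we have $\Supp R_a\cap K=\emptyset$, hence $\langle R_a,\varphi\rangle=0$. Therefore $\langle R,\varphi\rangle=\int_A\langle R_a,\varphi\rangle\,\nu(a)=0$, using that the pairing commutes with the integral over $A$ (which is exactly the defining property of $R=\int_A R_a\,\nu(a)$, applied coordinate-wise to the components of $\varphi$ against the components of $R_a$, all of which are measures depending continuously, hence measurably, on $a$). Since $\varphi$ was arbitrary, $R\equiv 0$ on $\Omega$, i.e. $\Supp R\subseteq\overline{\cup_a\Supp R_a}$.

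For item (3), fix $a_0\in\Supp\nu$ and a point $x_0\in\Supp R_{a_0}$; I must show $x_0\in\Supp R$, i.e. that $R$ does not vanish on any ball $B(x_0,\varepsilon)$. The natural test objects here are nonnegative pairings: since $R_a$ is a positive closed current of bidimension $(N-q,N-q)$ say, for a smooth nonnegative bump $\chi$ supported in $B(x_0,\varepsilon)$ the quantity $\langle R_a,\chi\,(dd^c\|z\|^2)^{N-q}\rangle=\int\chi\,d|R_a|$ is nonnegative and, by item (1) (lower semicontinuity of the support), positive for all $a$ in a neighbourhood $V$ of $a_0$ in $A$ — indeed for $a$ close to $a_0$ there are points of $\Supp R_a$ inside $\{\chi>0\}$, so $\int\chi\,d|R_a|>0$. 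More carefully, I would first shrink $\varepsilon$ so that $x_0$ has positive $|R_{a_0}|$-mass in $B(x_0,\varepsilon/2)$, choose $\chi\equiv 1$ there, and then invoke (1) to get a neighbourhood $V\ni a_0$ and a uniform lower bound $\int\chi\,d|R_a|\geq\delta>0$ for $a\in V$ — this uniform lower bound is the one point requiring a small compactness/continuity argument rather than pure definition-chasing, and is the step I expect to be the main obstacle. Granting it, $\nu(V)>0$ because $a_0\in\Supp\nu$, so $\int_A\big(\int\chi\,d|R_a|\big)\,\nu(a)\geq\delta\,\nu(V)>0$, while this integral equals $\int\chi\,d|R|=\langle R,\chi\,(dd^c\|z\|^2)^{N-q}\rangle$. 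Hence $R$ has positive mass on $B(x_0,\varepsilon)$, so $x_0\in\Supp R$, completing the proof.

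Regarding the uniform lower bound: if it resists a direct argument, an alternative is to avoid it entirely by a Fatou-type argument — the map $a\mapsto\int\chi\,d|R_a|$ is lower semicontinuous (weak convergence of positive measures gives $\liminf$ on open sets applied to $\{\chi>0\}$, combined with continuity of $R_a$), hence measurable and strictly positive on a neighbourhood of $a_0$, and one applies $\int_V(\cdots)\,d\nu>0$ using only that a lower semicontinuous function that is everywhere positive on a set of positive $\nu$-measure has positive integral. This sidesteps any need for uniformity.
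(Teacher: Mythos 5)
Your proof is correct and takes essentially the same approach as the paper's for item (3): fix a small ball around $x_0$ with $|R_{a_0}|$-mass bounded below by some $\eta>0$, use continuity of $a\mapsto R_a$ (tested against a fixed bump form, as you do, or informally as ``mass on $B$'' as the paper does) to propagate a lower bound $\eta/2$ to a neighbourhood $V$ of $a_0$, and conclude from $\nu(V)>0$ since $a_0\in\Supp\nu$. The paper dismisses items (1) and (2) as classical and immediate, respectively; your arguments for them are the standard ones, and your careful use of a fixed test form in (3) is if anything slightly more precise than the paper's phrasing.
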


Recall that the current $R= \int_{A} R_a \nu(a)$ is defined by
 the
identity $\langle R, \phi\rangle
=
\int_{A} \langle R_a, \phi\rangle \nu(a)$,
for $\phi$ test form of the right degree.

\begin{proof}
The first property is classical and the second is a direct consequence. Let us prove the last item.
Fix $a\in A$ and take 
$x\in \Supp R_a$. There exists
 an (arbitrarily small) ball $B$ centred at $x$
such that the mass of $R_a$ on $B$ is larger than some $\eta>0$.
 By the continuity of $R_a$, 
 the mass of $R_{a'}$ on $B$
 is larger that $\eta/2$ for \emph{every}
$a'$ sufficiently
close to $a$. In particular, this is true for all $a'$
in a ball $B'$ centred at  $a$.
Since $a\in \Supp \nu$, we have $\nu(B')>\eta'$ for some positive $\eta'$. 
Thus, $R$ has
 mass $> \eta \eta' /2$ on $B$, which in turn gives $x \in \Supp R$.
\end{proof}

\section{Vertical-like hyperbolic sets and IFSs}\label{s:vlifs}

\begin{defi}\label{defi-hyp-vert-like}
Let $f(z)=(p(z),q(z,w))$ 
be a polynomial skew product of degree $\geq 2$
	and let
$H$ be an $f$-invariant hyperbolic set. We say that $H$
is \emph{vertical-like} if there exists $\alpha>0$ such that, for every
$(z,w)\in H$, we have
$df_{(z,w)} (C_\alpha)\Subset C_\alpha$, where 	
	\begin{equation}\label{eq_cone}
	C_\alpha:=\big\{u \in \C^2 :  |\langle u, (0,1)  \rangle | >   \alpha  \|u\| \big\}.
	\end{equation}
\end{defi}

Recall that, given any ergodic measure 
$\nu$ 
supported on a $f$-invariant hyperbolic set $H$, 
by Oseledets theorem
one can associate to $\nu$-almost every $x \in H$
a decomposition of the tangent space $T_x\C^2 = E_1 \oplus E_2$, which is invariant under $f$,
with the property that $\lim_{n\to \infty} n^{-1} \log \|df_x^n (v)\| = \chi_i$ for all $v \in E_i$, where
$\chi_1,\chi_2$ are the Lyapunov exponents of $\nu$.
The hyperbolicity of $H$ implies that the decomposition is continuous in $x$, which in turn implies
that it is also independent of $\nu$.
Since $f$ is a polynomial skew product, we know that one invariant direction must necessarily
coincide with the vertical one. 
Denoting by $E_v= \langle (0,1)\rangle$ and $E_h$ the two fields of directions,
Definition \ref{defi-hyp-vert-like} implies that
  $E_h$
is then uniformly far from the vertical direction.

In the case of a periodic cycle, Definition \ref{defi-hyp-vert-like} can we rephrased
as a condition on the eigenvalues of the differential of the return map at the periodic points.
Although a periodic point is  not an invariant hyperbolic set, we will adopt
 the following notation for simplicity.

\begin{defi}\label{defi-per-vert-like}
Let $f(z)=(p(z),q(z,w))$ 
be a polynomial skew product of degree $\geq 2$
	and let $(z_1,w_1)$ be a 
	  $m$-periodic
	 point for $f$. Let $A:= (p^{m})' (z_1)$ and $B:=(Q_{z_1}^{m})'(w_1)$ be
	 the two eigenvalues of $df^{m}_{(z_1,w_1)}$. We say that
	$(z_1,w_1)$
	  is
	  \emph{vertical-like} if 
	$|B|>|A|$.
\end{defi}

\begin{defi}\label{defi-fibred-box}
Let $F$ be a subset of $J_p$.
We say that a set $A \subseteq F \times \C$ is a \emph{fibred box} if
$A$ is an open subset of $F \times \C$ of the form
$A= \cup_{z\in B} \{z\}\times D_z $ where $B$ is an open subset
 of $F$, 
and 
$D_z\subset \C$ is  a topological disk depending continuously on $z \in B$ and such that
$\mu_z 
(D_z)$ is constant in $z$. 
\end{defi}

   Observe that fibred boxes exist
  since the family of measures $z\mapsto \mu_z$
  is continuous.
  
 \begin{defi}\label{defi-vert-like-ifs}
 Let $H_p$ be a hyperbolic invariant compact subset of $J_p$.
	A \emph{vertical-like IFS} over $H_p$
	is the datum of a
	fibred box
	$W \subset H_p \times \C$
	 and of 
	$m$
	 inverse branches 
	$g_1, \ldots, g_m$ of $f^{-n}$ with $g_m(W) \Subset W$
	(in the relative topology of $J_{H_p}$), 
	and such that:
	\begin{enumerate}[label={\bf (V\arabic*)}]
\item\label{item_vlifs_limit} the limit set is a vertical-like hyperbolic set;
		\item\label{item_vlifs_vertical} for all $1 \leq i \leq m$, there exists $i \neq j$ such that $\pi_z(g_i(W))=\pi_z(g_j(W))$;
		\item\label{item_vlifs_horizontal} there exists $i \neq j$ such that $\pi_z(g_i(W)) \cap \pi_z(g_j(W)) = \emptyset$.
	\end{enumerate}
\end{defi}

Note that due to the skew product structure of $f$, for all $1,\leq i,j\leq m$, we automatically have 
either $\pi_z(g_i(W))=\pi_z(g_j(W))$ or $\pi_z(g_i(W)) \cap \pi_z(g_j(W))= \emptyset$.
We will consider in the following limit sets of vertical-like IFSs, which are then
vertical-like hyperbolic sets (contained in $J_{H_p}$)
 as in Definition \ref{defi-hyp-vert-like} by \ref{item_vlifs_limit}.
Condition \ref{item_vlifs_vertical} ensures that each vertical slice of the limit set is non-trivial
(i.e., it is a Cantor set in $\C$),
and condition \ref{item_vlifs_horizontal} 
ensures that the limit set  is not included in a single vertical fibre.

 In order to prove our main results, we will need that our maps admit
a  vertical-like hyperbolic set.
The following result ensures that this requirement is reasonably mild, and
explains the assumption on $p$ in our Theorems.

\begin{prop}\label{prop:modrep}
	Let $p$ be a polynomial with Julia set not totally disconnected,
	 which is neither conjugated to $z \mapsto z^d$
	nor to a Chebyshev polynomial. Then 
	any polynomial skew product $f$ of the form $f(z,w)=(p(z), q(z,w))$
	admits a vertical-like IFS.
\end{prop}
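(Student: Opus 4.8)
The goal is to produce, for an arbitrary polynomial skew product $f$ over such a $p$, an IFS of inverse branches of some iterate $f^{-n}$ satisfying conditions \ref{item_vlifs_limit}--\ref{item_vlifs_horizontal}. The plan is to first handle the base dynamics, then lift to the skew product. For the base: the hypotheses on $p$ (Julia set not totally disconnected, not conjugate to $z\mapsto z^d$ or to a Chebyshev polynomial) are exactly those that guarantee, via the classical theory of polynomial Julia sets, that $J_p$ carries a hyperbolic subset which is \emph{not} contained in a real-analytic curve and on which the dynamics is rich enough. Concretely, I would invoke the existence of a hyperbolic repeller $H_p \subset J_p$ (a basic set for some iterate $p^n$) conjugate to a full one-sided shift on sufficiently many symbols, chosen so that one can extract two inverse branches $h_i, h_j$ of $p^{-n}$ with $h_i(B) = h_j(B)$ impossible but with overlapping and disjoint images available; the key point is that because $J_p$ is not totally disconnected, $H_p$ can be taken with nonempty interior structure in $J_p$ (more precisely, so that fibred boxes over $H_p$ exist and the $\mu_p$-measures are well-behaved, as already noted after Definition \ref{defi-fibred-box}). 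The non-Chebyshev/non-power assumption is what rules out the degenerate cases where every inverse branch behaves ``the same way'' and conditions \ref{item_vlifs_vertical}--\ref{item_vlifs_horizontal} cannot be arranged simultaneously.

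\textbf{Lifting to the skew product.} Once $H_p$ and a fibred box $B \subset H_p$ are fixed, consider the return map structure of $f$ over $H_p$. Over each inverse branch $h_k$ of $p^{-n}$ on $B$, the map $f^n$ restricted to the corresponding vertical strip acts on the $w$-coordinate by a composition of degree-$d$ polynomials $Q^n_{\cdot}$; its derivative in $w$ is a product of $d\cdot(\text{values})^{d-1}$ terms, which for points in the filled Julia sets $K_z$ is large (the multiplier in $w$ dominates). The plan is to choose a vertical fibred box $W = \cup_{z \in B}\{z\}\times D_z$ with $D_z$ a small disk inside the ``expanding part'' — i.e., I would pick $W$ so that for a suitable inverse branch $g_m$ of $f^{-n}$ (covering the base branch $h_m$) one has $g_m(W)\Subset W$, with the vertical multiplier of the associated return dominating the horizontal one $(p^n)'$. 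This forces condition \ref{item_vlifs_limit}: the limit set is a hyperbolic set on which $df$ expands the vertical cone $C_\alpha$, which is precisely vertical-likeness (Definition \ref{defi-hyp-vert-like}), because the $w$-derivative being larger in modulus than the $z$-derivative $(p^n)'$ at every point makes $C_\alpha$ strictly forward-invariant under $df$ for $\alpha$ small; this is the same computation as in Definition \ref{defi-per-vert-like} generalized from periodic points to the whole invariant set. Then conditions \ref{item_vlifs_vertical} and \ref{item_vlifs_horizontal} are inherited from the base: since the $z$-projection $\pi_z(g_i(W)) = h_i(B)$, choosing the base inverse branches so that some pair has equal images and some pair has disjoint images (possible precisely by the richness of $H_p$) gives exactly \ref{item_vlifs_vertical} and \ref{item_vlifs_horizontal}, using the automatic dichotomy noted right after Definition \ref{defi-vert-like-ifs}.

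\textbf{The main obstacle.} I expect the delicate step to be the construction of $H_p$ with the right combinatorial and geometric properties — specifically, arranging that the base hyperbolic set admits inverse branches realizing both ``vertical-type'' overlaps ($\pi_z$-images coinciding, i.e.\ two branches landing in the same part of $J_p$) and a ``horizontal-type'' separation ($\pi_z$-images disjoint), \emph{while simultaneously} ensuring the vertical multiplier of $f^n$ dominates $(p^n)'$ uniformly on the lifted set. The domination is automatic if $n$ is large, because $(p^n)'$ on a hyperbolic set grows like $e^{n\chi_p}$ with $\chi_p$ the base Lyapunov exponent, whereas the $w$-multiplier of $Q^n$ on the relevant piece of $K_z$ grows at rate $d^n \gg e^{n\chi_p}$ provided the $w$-orbit stays in a region where $|Q^\ell_z(\cdot)|$ is bounded below — so the real content is choosing $W$ (hence the $D_z$) to stay in such a region, which is where one uses that $J_z$ is genuinely two-dimensional over $H_p$. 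This is exactly the point where the thermodynamic-formalism input referenced for Appendix \ref{as:proof-lemma-fibered} enters: one needs the family $z\mapsto J_z$ to move continuously and non-trivially over $H_p$, which follows from the hypothesis that $J_p$ is not totally disconnected (so that $H_p$ can be taken large) together with standard transfer-operator estimates. The remaining verifications (fibred box exists, $g_m(W)\Subset W$, inverse branches well-defined on $W$) are routine once the expansion estimate is in place.
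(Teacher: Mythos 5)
Your overall structural outline — build a hyperbolic set in $J_p$, form a fibred box over it, pull back by inverse branches of $f^n$, and check the vertical-like and covering conditions — matches the shape of the paper's argument. But there is a genuine and serious gap in the central analytic step, and as a consequence the role of the hypotheses on $p$ is misidentified.

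You assert that the domination of the vertical multiplier over the horizontal one ``is automatic if $n$ is large, because $(p^n)'$ on a hyperbolic set grows like $e^{n\chi_p}$ with $\chi_p$ the base Lyapunov exponent, whereas the $w$-multiplier of $Q^n$ \ldots grows at rate $d^n \gg e^{n\chi_p}$.'' This is false. For a degree-$d$ polynomial $p$ the Lyapunov exponent with respect to the maximal measure is $\log d + \sum_c G_p(c) \geq \log d$, and on a generic hyperbolic subset of $J_p$ the base expansion rate $\chi_p$ can be $\geq \log d$. Meanwhile $L_v \geq \log d$ with no strict inequality in general. So $d^n$ does \emph{not} automatically dominate $|(p^n)'|$, and without an extra input the cone condition \ref{item_vlifs_limit} can fail. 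This is precisely where the paper needs a nontrivial theorem: by Przytycki--Zdunik, since $p$ is neither a power map nor Chebyshev, one can choose a compact hyperbolic $\tilde H \subset J_p$ with $\delta := \dim_H \tilde H > 1$ and positive entropy; then by Manning's formula $L_{\tilde\nu} = h_{\tilde\nu}/\delta < h_{\tilde\nu} < \log d \leq L_v$. Only this carefully selected $\tilde H$, with $\delta > 1$, gives a horizontal rate \emph{strictly below} $\log d$, and hence below $L_v$. The non-power/non-Chebyshev hypothesis is used exactly here — to get hyperbolic dimension $>1$ — not, as you write, ``to rule out degenerate cases where every inverse branch behaves the same way.''

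A secondary point: even with the right $\tilde H$, turning the average Lyapunov-exponent inequality into a \emph{pointwise} estimate on a collection of at least $3d^n$ compactly included inverse branches (so that one can then discard some and still have enough for \ref{item_vlifs_vertical} and \ref{item_vlifs_horizontal}) requires the fibred Briend--Duval type argument of Lemma \ref{lemma-BD-fibered} (natural extension, distortion along $\hat\nu$-typical inverse branches, mixing to place enough branches back in $A_r$, and a Jacobian bound to count them). Your proposal treats these as ``routine verifications,'' but this counting argument with the two Lyapunov estimates in \eqref{eq_est_cone} is where the construction actually closes; without it you have no guarantee that enough branches exist with the required derivative control. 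Your reading of how \ref{item_vlifs_vertical}--\ref{item_vlifs_horizontal} follow from the base projections, given such a collection, is essentially correct and matches the paper's counting ($3d^n$ branches, at most $d^n$ discarded to pair up fibers, at most $\sim d^n$ per fiber).
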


\begin{proof}
	By a 
	result of Przytycki and Zdunik \cite{przytycki2020hausdorff}
	 (see also \cite{przytycki1985hausdorff,zdunik1990parabolic} for previous results in the connected case), 
	 since $p$ is neither conjugated to $z \mapsto z^d$ nor to 
	a Chebyshev polynomial, there exists a compact hyperbolic invariant set $\tilde H \subset J_p$, with $\delta:=\dim_H \tilde H >1$
	and positive entropy.
	By the general theory of the thermodynamical formalism, there exists a unique ergodic invariant
	probability measure $\tilde \nu$ supported on
	$\tilde H$ that is absolutely continuous with respect to the $\delta$-dimensional Hausdorff measure
	(see for instance \cite{przytycki2010conformal,przytycki2018thermodynamic}).
	By Manning's formula, $L_{\tilde \nu} = \frac{h_{\tilde \nu}}{\delta}$, where
	$L_{\tilde \nu}$ is the Lyapunov exponent of $\tilde \nu$, and $h_{\tilde \nu}$ its metric entropy. Since 
	$\delta>1$ and $h_{\tilde \nu} < \log d$, 
	we deduce that $L_{\tilde \nu}<\log d$.

We now consider the measure $\nu := \int_{\tilde H} \mu_z \ d\tilde \nu(z)$, whose support
is equal to 
 $J_{\tilde H}$.
  The existence of the vertical-like IFS as in the statement will follow
  from the following result.
The proof uses tools
from the thermodynamical formalism
together with quantitative estimates.
  We give it in Appendix \ref{as:proof-lemma-fibered}.

\begin{lemma}\label{lemma-BD-fibered}
For every $\eps>0$ there exists 
a  fibred box $A$
in $\tilde H \times \C$
such that,
 for  all $n$ sufficiently large, 
 the exists at least $3d^n$ 
 $f^n$-inverse branches  $A_i$ of $A$
 compactly
contained in $A$ (for the induced topology on $\tilde H \times \C$)
which are fibred boxes  and
with the property that, for all $i$,
$f^n: A_i \to A$ is injective and
\begin{equation}\label{eq_est_cone}
\frac{1}{n} \log |(p^n)' (x) |< L_{\tilde \nu} + \epsilon
\quad \mbox{ and } \quad
\frac{1}{n}\log |(Q^n_{z})' (x,y)|> L_{v}-\eps
\quad 
\mbox{ for all } (x,y)\in A_i.
\end{equation}
\end{lemma}

Recall that $L_v \geq \log d$, hence $L_v > L_{\tilde \nu}$. 
Let $A, A_i$ be given by 
 Lemma \ref{lemma-BD-fibered} applied 
 with
$\eps < L_v - L_{\tilde \nu}$  and
  $n$ sufficiently large.
 Since the entropy of $\tilde H$ is smaller than $\log d$,
  up to removing a small number of $A_i$'s (bounded by $d^n$)
we can assume that for every $j$ there exists $i\neq j$ such that
$A_i$ and $A_j$ have the same projection on the first component, giving \ref{item_vlifs_vertical}.
The number of remaining $A_i$'s is still bounded below by $2d^n$.
Since at most $\sim d^n$ of them can share the same projection on the first coordinate, this also proves \ref{item_vlifs_horizontal}.
The assertion follows since the inequalities in Lemma \ref{lemma-BD-fibered}
imply that the limit set is a vertical-like hyperbolic set, giving \ref{item_vlifs_limit}.
\end{proof}

\section{Higher bifurcations: an analytic criterion}\label{s:higher_current}\label{s:beg}

In this section we establish the following 
technical result,
which gives an analytic 
sufficient condition for a point to lie in the support
of the
higher bifurcation currents.
Recall that, given  a simple critical point $c(\lam)$ 
for $q_{\lam,z_0}$
and a
repelling point $r(\lam)$
for $f_\lam$,
 we denote by
  $M_{(z_0,c),r, n_0}$ 
  the analytic subset of $M$ given by the equation
  $f^{n_0}_{\lam} (z_0, c(\lam))=r(\lam)$.

\begin{prop}\label{prop_sufficient_condition_bifk}
Let $(f_\lam)_{\lam\in M}$
 be a holomorphic family of polynomial skew products over a given base $p$.
Let $\lam_0\in M$ and  
 $z_1, \dots, z_k\in J_p$ 
 satisfy the following properties:
\begin{enumerate}
\item there exist
 simple critical points
  $c_i$
   for $q_{\lam_0,z_i}$
such that $r_i := f_{\lam_0}^{m_i}(z_i,c_i)$ is a repelling periodic 
point
for $f_{\lam_0}$;
\item $\codim \cap_{i=1}^k  M_{(z_i,c_i),r_i, m_i} =k$.
\end{enumerate}
Then $\lam_0 \in \Supp \tbif^k (M)$.
\end{prop}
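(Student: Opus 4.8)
### Proof Proposal for Proposition \ref{prop_sufficient_condition_bifk}

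The plan is to follow the strategy of Buff--Epstein and Gauthier, adapted to the skew-product setting, and to use the ``large scale condition'' of \cite{astorg2019collet} as the quantitative engine. Each Misiurewicz relation $f_{\lam_0}^{m_i}(z_i,c_i)=r_i$ produces, via standard transversality/Misiurewicz-parameter arguments, a local contribution to the bifurcation current $\tbif_{z_i}$: indeed, at a non-persistent Misiurewicz parameter the activity current associated to the critical point $c_i$ in the fibre $\{z_i\}\times \C$ is locally a current of integration (with positive weight) along the hypersurface $M_{(z_i,c_i),r_i,m_i}$, up to a smooth form with continuous potential. This is the content of the one-critical-point case and is essentially \cite{bassanelli2007bifurcation, buff2009bifurcation, gauthier2012strong} transplanted fibrewise; since $c_i$ is a simple critical point this is clean.

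First I would set up, for each $i$, the graph transform / holomorphic motion picture: the repelling cycle $r_i$ and the critical point $c_i(\lam)$ move holomorphically near $\lam_0$, and the failure of $f_\lam^{m_i}(z_i,c_i(\lam))$ to stay on $r_i(\lam)$ is measured by a holomorphic function $\psi_i$ on a neighbourhood of $\lam_0$ in $M$ whose zero set is exactly $M_{(z_i,c_i),r_i,m_i}$. The hypothesis $\codim \bigcap_{i=1}^k M_{(z_i,c_i),r_i,m_i}=k$ says precisely that $\psi_1,\dots,\psi_k$ form (near $\lam_0$) part of a system of local coordinates transverse to the intersection, i.e. the $\psi_i$ define a submersion onto $\C^k$ near $\lam_0$. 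Then I would invoke the large scale condition: iterating and renormalizing, the postcritical point $f_\lam^{m_i+\ell}(z_i,c_i(\lam))$ escapes the relevant repelling linearizing chart at a geometric rate, so that pulling back a fixed-size disk by the inverse of the linearizer gives graphs that become ``large'' in the sense of \cite{astorg2019collet}; this exhibits $\tbif_{z_i}$ locally as a limit of suitably normalized currents supported near $\{\psi_i=0\}$, and crucially gives a uniform lower bound on the mass that does not degenerate.

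Next I would take the wedge. Because the potentials of the $\tbif_{z_i}$ are continuous (this is what makes $\tbif^k$ and the fibred currents $T_{\underline z}$ well defined, as recalled in Section \ref{s:decomposition}), the product $\tbif_{z_1}\wedge\cdots\wedge\tbif_{z_k}$ is well defined and its local mass near $\lam_0$ can be estimated by intersecting the $k$ approximating families of currents. Here the transversality coming from hypothesis (2) — the submersion property of $(\psi_1,\dots,\psi_k)$ — guarantees that the wedge does not vanish: one computes, e.g. by slicing by a generic $k$-dimensional plane transverse to $\bigcap M_{(z_i,c_i),r_i,m_i}$ and reducing to the one-variable case on each factor, that the mass of $T_{z_1}\wedge\cdots\wedge T_{z_k}$ on every small ball around $\lam_0$ is bounded below by a positive constant. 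By Proposition \ref{prop_decomposition}, $\Supp T_{\underline z}\subseteq \Supp\tbif^k(M)$ for $\underline z=(z_1,\dots,z_k)$ (taking, if necessary, $\mu^{\otimes k}$-generic auxiliary points, or directly using that $T_{\underline z}$ is one of the currents being averaged), so $\lam_0\in\Supp\tbif^k(M)$.

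The main obstacle I expect is the non-degeneracy of the wedge product: positivity of currents means that a priori the product of $k$ non-zero currents could still have zero mass near a given point, so one genuinely needs the quantitative input (the large scale condition giving uniform, non-collapsing lower bounds on the approximants) together with the transversality hypothesis (2) to control how the $k$ families of near-hypersurface currents meet. Making the slicing argument rigorous — ensuring that the generic transverse plane meets the supports in the expected way and that the one-dimensional lower bounds survive the intersection — is the technical heart; the holomorphic-motion and escape-rate estimates for the individual $\tbif_{z_i}$, by contrast, are by now standard and, as the authors note, ``do not require essentially new arguments.''
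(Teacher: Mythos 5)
Your proof goes wrong at the interpretation of hypothesis~(2). You assert that $\codim\bigcap_{i=1}^k M_{(z_i,c_i),r_i,m_i}=k$ ``says precisely that $\psi_1,\dots,\psi_k$ \ldots\ define a submersion onto $\C^k$ near $\lam_0$.'' This is false: the codimension-$k$ hypothesis is a \emph{proper intersection} condition, not a transversality condition. For instance, for $\psi_1(\lam)=\lam_1^2$ and $\psi_2(\lam)=\lam_2$ in $\C^2$ the common zero set has codimension $2$ but $(\psi_1,\psi_2)$ is not a submersion at the origin. The paper in fact draws attention to exactly this distinction just before stating the proposition: Buff--Epstein require transversality (i.e.\ the submersion picture you describe), while Gauthier's argument --- which is the one needed here --- only requires proper intersection, and the hypothesis of the proposition is the weaker one. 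The paper extracts what it needs from hypothesis~(2) via a different route: it shows the map $H_{\underline m}=\Phi'\circ\Xi^{\underline c}_{\underline m}$ is \emph{open} near $\lam_0$ by checking that $\lam_0$ is isolated in $(H_{\underline m})^{-1}H_{\underline m}(\lam_0)$ (this is a finite-map/openness criterion from local analytic geometry, not the implicit function theorem), and similarly for $H_{\underline n_l}$; this is exactly what proper intersection gives and what submersion is not needed for.

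The second weakness is structural: after misreading~(2), you ``invoke the large scale condition'' and then propose to verify the wedge is non-trivial by ``slicing by a generic $k$-dimensional plane transverse to $\bigcap M_{(z_i,c_i),r_i,m_i}$ and reducing to the one-variable case on each factor.'' This is not how the argument is run and is not obviously sound: the analytic hypersurfaces $M_{(z_i,c_i),r_i,m_i}$ need not be smooth at $\lam_0$, their tangent cones need not be in general position, and the wedge of currents does not factor along a generic slice without further input. The paper's verification of the fibred large scale condition instead works in linearizing coordinates around the repelling points, builds the nested domains $\Omega_l=\pi_M(\Gamma_0\cap B_l)$ whose closures shrink to $\{\lam_0\}$, and then uses a topological ``Fact'' (two $k$-dimensional analytic subsets of $\Omega'\times\D(0,\eta)^k$, one with projection to $M$ inside $\Omega'$ and the other with fibre projection inside $\D(0,\eta/2)^k$, must meet) to deduce that $\Xi^{\underline c}_{\underline n_l}:\Omega_l\to D_1\times\dots\times D_k$ is surjective and proper. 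That intersection argument is the technical heart, and it is precisely what replaces the transversality you assumed. As written, your proposal would only prove the proposition under the stronger (and unavailable) transversality hypothesis, so the gap is genuine.
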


In the case of families of rational maps, this result
is due to Buff-Epstein \cite{buff2009bifurcation}, using transversality arguments.
In \cite{gauthier2012strong}, Gauthier
uses different arguments that only require that
the intersections are proper, as is the case in Proposition \ref{prop_sufficient_condition_bifk}. A more general
condition (called the \emph{generalized large scale condition}) was introduced in \cite{astorg2019collet} as
a sufficient condition for a point to lie in the support of $\tbif^k$ (for a family of rational maps). 
 We give an adapted version of this notion in our non-autonomous
setting, and deduce 
that a parameter $\lam_0$
as in the statement satisfies such condition. This will prove Proposition \ref{prop_sufficient_condition_bifk}.

In the following we assume that $z_1, \dots, z_k \in J_p$ and that $c_j(\lam)$ are holomorphic maps
such that $c_j(\lam)$ is a critical point for $q_{\lam,z_j}$ for all $\lam\in M$. We denote by $\underline c : M\to \C^k$ the map
$c(\lam)=(c_1(\lam), \dots, c_k(\lam))$.
For a $k-$uple $\underline n := (n_1, \dots, n_k)$, we
define
\begin{equation}\label{eq_xi}
\xi_{n_j}^{j} (\lam) := Q^{n_j}_{\lam,z_j} (c_j(\lam))
\quad
\mbox{ and }
\quad
\Xi_{\underline n}^{\underline c} (\lam) := (\xi_{n_1}^1 (\lam), \dots \xi_{n_k}^k (\lam)).
\end{equation}
Notice that $\Xi_{\underline n}^{\underline c}: M \to \C^k$.
We denote by $C_j$ the graph of $c_j$ in $M\times \C$
and by
 $V_{\underline n}$
the graph of $\Xi_{\underline n}^{\underline c}$ in $M\times \C^k$.
We also write $\abs{\underline n} :=n_1 + \dots + n_k$
for a $k$-uple $\underline n$ as above.

\begin{defi}[Fibred large scale condition]\label{defi_large_scale}
We say that $\lam_0\in M$
satisfies the \emph{fibred large 
scale condition} for the critical points $(z_1, c_1), \dots, (z_k,c_k)$
if there exist $z'_1, \dots z'_k\in J_p$, 
disks $D_1, \dots, D_k\subset \C$ with $D_i \cap J_{z'_i} \neq \emptyset$,
a sequence ${\underline n}_l= (n_{l,1}, \dots, n_{l,k})$ of $k-$uples
with $n_{l,i}\to \infty$
 and a nested sequence of open subsets 
 $\Omega_l$ 
 such that
\begin{itemize}
\item
$\cap_{l} \bar \Omega_l = \{\lam_0\}$, and
\item $\Xi_{{\underline n}_l}^{{\underline c}} \colon \Omega_l \to D_1 \times \dots \times D_k$ is a proper surjective map.
\end{itemize}
\end{defi}

\begin{prop}\label{prop_scale_then_bif}
Let $\lam_0 \in M$
satisfy the fibred
large scale condition for some points $(z_1,c_1), \dots, (z_k,c_k)$
with $q'_{z_j} (c_j)=0$ for every $j$ 
and such that the $z_j$ are preperiodic for $p$.
Then $\lam_0 \in
\Supp {\tbif}_{z_1}\wedge \dots \wedge {\tbif}_{z_k}$.
\end{prop}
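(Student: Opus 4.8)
The strategy is to reduce the statement to a potential-theoretic estimate showing that the wedge $\tbif_{z_1}\wedge\cdots\wedge\tbif_{z_k}$ cannot vanish near $\lam_0$, by using the fibred large scale condition to produce, at every fine scale $\Omega_l$, a uniform lower bound on the relevant mass. Recall from Subsection~\ref{s:decomposition} that $\tbif_{z_j}=dd^c_\lam\big(\sum_{w:q'_{\lam,z_j}(w)=0}G_\lam(z_j,w)\big)$ and, since $c_j(\lam)$ is a simple critical point, locally $G_\lam(z_j,c_j(\lam))$ is a continuous psh potential for a closed positive $(1,1)$-current $\htbif_{z_j}$ with $\htbif_{z_j}\le\tbif_{z_j}$; so it suffices to show $\lam_0\in\Supp\bigwedge_{j=1}^k\htbif_{z_j}$. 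The key point is the standard telescoping identity for the non-autonomous Green function along the fibre at a preperiodic $z_j$: since $z_j$ is preperiodic for $p$, the return map to the periodic cycle is a genuine polynomial of some degree $D$, and one has $G_\lam(z_j,c_j(\lam))=\lim_{n}D^{-n}\log^+|\xi^j_n(\lam)|$ with uniform estimates; hence the potentials $u^j_l(\lam):=D^{-n_{l,j}}\log^+|\xi^j_{n_{l,j}}(\lam)|$ converge locally uniformly to (a constant multiple of) the potential of $\htbif_{z_j}$.

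First I would set up the local picture: restrict to a small ball around $\lam_0$, pass to the periodic cycle under $p$ so that each $\xi^j_{n}$ is a composition of honest polynomials, and record the Green-function estimate $\big|\,D^{-n}\log^+|\xi^j_n(\lam)| - G_\lam(z_j,c_j(\lam))\,\big| = O(D^{-n})$, uniform on compacts; this identifies the limiting currents. Second, I would exploit the hypothesis: on $\Omega_l$ the map $\Xi_{\underline n_l}^{\underline c}\colon\Omega_l\to D_1\times\cdots\times D_k$ is proper and surjective, with each $D_i$ meeting $J_{z'_i}$; properness means it is a branched cover of some degree $\ge 1$, so pulling back the product structure, one gets that the slices $\{\xi^j_{n_{l,j}}=\text{const}\}$ genuinely move and sweep out $D_j$. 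The concrete way to extract bifurcation from this: choose in each $D_j$ a point $p_j$ lying in $J_{z'_j}$; along the $\Xi_{\underline n_l}^{\underline c}$-fibre over $(p_1,\dots,p_k)$, the critical value $\xi^j_{n_{l,j}}(\lam)$ is pinned to a Julia-set point while for generic nearby $\lam$ it escapes, which forces the normal family $\{D^{-n}\log^+|\xi^j_n|\}$ to fail to be normal there; more efficiently, one computes $(dd^c u^1_l)\wedge\cdots\wedge(dd^c u^k_l)$ on $\Omega_l$ by the projection formula as $(\Xi^{\underline c}_{\underline n_l})^*(\omega_1\wedge\cdots\wedge\omega_k)$ for suitable currents $\omega_j=dd^c(D^{-n_{l,j}}\log^+|\cdot|)$ on $D_j$, whose mass on $D_j$ is bounded below independently of $l$ because $D_j\cap J_{z'_j}\ne\emptyset$ (the Julia set carries the bifurcation measure's fibred analogue with positive mass on every open set meeting it). Hence $\int_{\Omega_l}\bigwedge dd^c u^j_l\ge c>0$ uniformly in $l$.

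Third, I would pass to the limit: by the local uniform convergence $u^j_l\to$ (potential of $\htbif_{z_j}$) and continuity of the Monge--Ampère / wedge operator along monotone or locally uniformly convergent sequences of continuous psh functions (Bedford--Taylor, as used in \cite{astorg2019collet}), the masses $\int_{\Omega_l}\bigwedge dd^c u^j_l$ converge to $\int_{\Omega_l}\bigwedge\htbif_{z_j}$ up to $o(1)$; since $\bigcap_l\bar\Omega_l=\{\lam_0\}$, this yields positive mass of $\bigwedge_j\htbif_{z_j}$ in every neighbourhood of $\lam_0$, i.e.\ $\lam_0\in\Supp\bigwedge_j\htbif_{z_j}\subseteq\Supp\bigwedge_j\tbif_{z_j}$, as desired. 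The main obstacle I anticipate is the second step — making rigorous the claim that properness plus surjectivity of $\Xi^{\underline c}_{\underline n_l}$ onto a polydisk meeting the fibred Julia sets forces a uniform lower bound on the wedge mass, independent of $l$ and of the (possibly growing) branching degree; this is exactly the ``large scale'' mechanism, and the clean way to handle it is to prove that $\lam_0$ satisfies the generalized large scale condition of \cite{astorg2019collet} in the present non-autonomous setting (verifying the axioms there using the Green-function estimates above) and then quote their theorem that the large scale condition implies membership in $\Supp\tbif^k$. I would structure the write-up so that the bulk is this verification, with the potential-theoretic convergence relegated to a citation.
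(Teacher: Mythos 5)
Your overall strategy coincides with the paper's: reduce to $\dim M = k$, exploit the pullback structure coming from the maps $\Xi^{\underline c}_{\underline n_l}$ (equivalently, the graphs $V_{\underline n_l}$ in $M\times\C^k$), and deduce positivity of the wedge mass on $\Omega_l$ from the surjection onto $D_1\times\cdots\times D_k$ together with $D_j\cap J_{z'_j}\neq\emptyset$. This is precisely what the paper does, by adapting \cite[Lemmas~3.3 and 3.4]{astorg2019collet} to the fibred setting; your closing remark that the clean way forward is to verify the axioms of \cite{astorg2019collet} in the non-autonomous setting and then conclude is essentially the correct reading of the proof.

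Two points in the mechanism would need fixing before this could be written up. First, the currents $\omega_j = dd^c\big(D^{-n_{l,j}}\log^+|\cdot|\big)$ are not the right ones: each is a multiple of Haar measure on the unit circle, so the hypothesis $D_j\cap J_{z'_j}\neq\emptyset$ gives no positivity of $\omega_j(D_j)$, and moreover the factor $D^{-n_{l,j}}$ tends to $0$, so even if $D_j$ met the circle the mass would vanish in the limit. What one needs instead is the fibred harmonic measure $dd^c_w G_{\lam_0}(z'_j,\cdot)$, whose support is exactly $J_{z'_j}$; this is what appears in the paper's \emph{exact} identity $\tbif_{z_1}\wedge\cdots\wedge\tbif_{z_k}(\Omega) = d^{-|\underline n|}\int_{\Omega\times\C^k}F_{\underline n}^*\big(\bigwedge_j\tilde\pi_j^*(dd^c_{\lam,w}G_\lam(z'_j,\cdot))\big)\wedge[V_{\underline n}]$, valid for every $\underline n$. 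Second, the asserted uniform lower bound $\int_{\Omega_l}\bigwedge_j dd^c u^j_l\geq c>0$ cannot hold: since $u^j_l$ already carries the rescaling and $\bar\Omega_l\to\{\lam_0\}$, the left-hand side is (up to the slip above) $\tbif_{z_1}\wedge\cdots\wedge\tbif_{z_k}(\Omega_l)$, which tends to $0$. What is bounded below uniformly in $l$ is the \emph{un}rescaled integral $\int_{\Omega_l\times\C^k}(\cdots)\wedge[V_{\underline n_l}]$, by the liminf estimate of \cite[Lemma~3.4]{astorg2019collet}; multiplying by $d^{-|\underline n_l|}$ then yields $\tbif_{z_1}\wedge\cdots\wedge\tbif_{z_k}(\Omega_l)>0$ for each large $l$, which is all that is needed. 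Working with these exact identities is also what replaces the Bedford--Taylor limit passage on shrinking domains, which (as you correctly flag as the main obstacle) is delicate precisely because the convergence of the potentials $u^j_l$ is not controlled relative to the shrinking scale of $\Omega_l$.
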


\begin{proof}
The proof 
follows the same line as that of 
\cite[Theorem 3.2]{astorg2019collet}. We give
 here the main steps.

First of all, it is enough to prove
the statement in the assumption that the dimension of
$M$
is equal to $k$, see \cite[Lemma 6.3]{gauthier2012strong}.
For every $\underline n = (n_1, \dots, n_k)$ with $n_j\geq 0$, we
define the map
\[
\begin{aligned}
F_{\underline n}
 \colon & M \times \C^k & \to & \quad M \times \C^k \\
&(\lam, w_1, \dots, w_k) & \mapsto & \quad (\lam, Q_{\lam,z_1}^{n_1} (w_1), \dots, Q_{\lam, z_k}^{n_k} (w_k)).
\end{aligned}
\]
and we denote by $\tilde \pi_j\colon M \times \C^k \to M \times \C$ 
the projection
$(\lam, w_1, \dots, w_k)\mapsto (\lam, w_j)$.
One can prove that, for every $\underline{n}$ as above
and Borel set $\Omega \subseteq M$,
\[
\begin{aligned}
{\tbif}_{z_1}\wedge \dots \wedge {\tbif}_{z_k} (\Omega) 
&= d^{- \abs{n}} \int_{\Omega \times \C^k} 
F_{\underline n}^*
 \Big(
 \bigwedge_{j=1}^k \tilde \pi_j^*  (dd^c_{\lam, w} G_{\lam} (z'_j,\cdot) )
 \Big)
 \wedge 
 \Big[
 \bigcap_{j=1}^k C_j
 \Big]\\
&= d^{- \abs{n}} \int_{\Omega \times \C^k} 
\Big(
\bigwedge_{j=1}^k \tilde \pi_j^*  (dd^c_{\lam, w} G_{\lam} (z'_j,\cdot) ) \Big)
 \wedge [V_{\underline n}],
\end{aligned}\]
see \cite[Lemma 3.3]{astorg2019collet}.
Moreover, 
with $\Omega_l$ and $\underline n_l$
as in the statement,
we also have (see \cite[Lemma 3.4]{astorg2019collet}) that
\[
\liminf_{l\to \infty}
 \int_{\Omega_l \times \C^k}
  \Big(\bigwedge_{j=1}^k \tilde \pi_j^*  (dd^c_{\lam, w_j} G_{\lam} (z'_j,\cdot) ) \Big) 
  \wedge [V_{\underline n_l}]
\geq \prod_{j=1}^k   (dd^c_{w} G_{\lam_0} (z'_j, \cdot) )(D_j).
\]
We use in this step the second assumption in
Definition
\ref{defi_large_scale}.
The right hand side of the last expression is strictly positive by the assumption that $D_j \cap J_{z'_j} \neq \emptyset$. 
This implies that  ${\tbif}_{z_1}\wedge \dots \wedge {\tbif}_{z_k} (\Omega_l) >0$, for
a sequence of integers $l$ going to infinity. Hence $\lam_0 \in \Supp {\tbif}_{z_1}\wedge \dots \wedge {\tbif}_{z_k}$, as desired.
\end{proof}

We can now prove Proposition \ref{prop_sufficient_condition_bifk}.

\begin{proof}[Proof of Proposition \ref{prop_sufficient_condition_bifk}]
By Proposition \ref{prop_decomposition} it is enough to prove that
 $\lam_0 \in \Supp {\tbif}_{z_1}\wedge \dots \wedge  {\tbif}_{z_k}$.
By Proposition \ref{prop_scale_then_bif} it 
is thus enough to prove that any $\lam_0$ as in the statement satisfies
 the 
 fibred large scale condition above. 
 We can also assume that the dimension of $M$ is $k$.

Denote by $s_i$ the period of the repelling point $r_i$
and set $\underline s = (s_1, \dots s_k)$.
Set $r_i =: (z'_i,r'_i)$ and similarly let
$r_i (\lam) = (z'_i, r'_i (\lam))$ be the motion of 
$r_i$ in a neighbourhood of $\lam_0$ as a periodic point.
Fix $\eta>0$ and
an open neighbourhood $\Omega$
of $\lam_0$ such that the following properties hold:
\begin{enumerate}
\item for all $r_i$ 
 as in the statement, 
 $r'_i(\lam)\in \D (r'_i, \eta/10)$
 for all $\lam\in \Omega$;
 \item  
 for every $i$ and every $\lam\in \Omega$,
  the map $Q^{s_i}_{\lam,z'_i}$ is uniformly expanding on
   $\D (r'_i (\lam), \eta)$ (with expansivity factor uniform in $\lam$).
  \end{enumerate} 
  Observe
  that, for all $\lam \in \Omega$, we have $\D(r'_i, \eta/2)\subset \D (r'_i (\lam), \eta)$. 
  We
  set
\[A_0 :=\{ (\lam, w_1, \dots, w_k) \in \Omega \times \C^k \colon w_i \in \D (r'_i(\lam), \eta)\}.\]
We denote by $g_{\lam,i}\colon \D (r'_i (\lam), \eta)\to \C$
  the inverse branch of $Q^{s_i}_{\lam,z'_i}$ such that $g_{\lam,i} (r'_i (\lam))= r'_i (\lam)$
  and  by $\underline G: A_0 \to \Omega \times \C^k$
  the inverse branch of $F_{\underline s}$
which agrees  on $A_0$ with the $g_{\lam,i}$ as above.
For $l \in \N$, 
  we set $A_l := \underline G^l (A_0)$. 
  Observe that
$A_l$
 shrinks (exponentially) with $l\to \infty$ to the graph of the product map
$\lam\mapsto (r'_1(\lam), \dots, r'_k (\lam))$.
  
  Consider the map $\Phi'\colon \C^k \to \C^k$ defined by
  \[
  \Phi' ( w_1, \dots, w_k) =  (w_1- r_1 (\lam), \dots, w_k - r_k (\lam))\]
  and set $\Phi (\lam, w_1, \dots, w_k):= (\lam, \Phi'(w_1, \dots, w_k))$.
  Observe that
   $\Phi (\lam, r_1 (\lam), \dots, r_k (\lam))= (\lam, 0, \dots, 0)$. We denote
    $B_0 := \Phi (A_0) = \Omega \times \D(0, \eta)^k$
 and similarly
 set $B_l := \Phi (A_l)$.  
 We will also need the projections
  $\pi_M, \underline{\pi}$ of $ M\times \C^k$
  on
  $M$ and
   on
   $\C^k$, respectively.

For every $\underline n = (n_1, \dots, n_k)$ consider the map $H_{\underline n} \colon \Omega \to \C^k$ given by
$H_{\underline n} := \Phi' \circ \Xi^{\underline c}_{\underline n}$, 
where $\Xi^{\underline c}_{\underline n}$ is defined in \eqref{eq_xi}.
We claim that the map $H_{\underline m}$ is open in a neighbourhood of $\lam_0$,  
  where $\underline m = (m_1, \dots, m_k)$.
  By \cite[\S 3.1.2 and \S 5.4.3]{grauert2012coherent}, it is enough to check that
  the point $\lam_0$ is isolated in 
 $ (H_{\underline m})^{-1}  H_{\underline m} (\lam_0)$. 
 This is precisely given by the second assumption in the statement. 
 The same
 assumption
and the fact that the $q_{\lam,z}$'s are open imply that, for any $l\in \N$, we also have
 $\codim \cap_{i=1}^k  M_{(z_i,c_i),r_i, m_i+ls_i} =k$. The argument above implies that
 also the maps $H_{\underline n_l}$ are open, where $\underline n_l := (m_1 + ls_1 , \dots , m_k + l s_k)$.
 
 By restricting if necessary the $\Omega$ as above, 
we see that the graph
$\Gamma_0$
 of the map
  $H_{\underline m}$
 is of dimension $k$ in $B_0$.
We set 
$\Omega_l := \pi_M (\Gamma_0 \cap B_l)$. The $\Omega_l$'s 
are then open.
Since $B_l$ shrinks 
with $l$ to the constant graph $\{(\lam, 0, \dots, 0)\}$, 
 we also have that
$\Omega_l$ shrinks to $\{\lam_0\}$ as $l\to \infty$. 

Set $D_i := \D (r'_i, \eta/4)$, let $\Gamma_l$ be
the graph of  $H_{\underline n_l}$ on $\Omega_l$
  (which, by the above, is also $k$-dimensional)
 and recall that 
 $V_{\underline n_l}$
denotes  the graph of
$\Xi^{\underline c}_{\underline n_l}$.
 To conclude it is enough to prove
that, for all
$l\in \N$, 
$\underline{\pi} (\pi_M^{-1} (\Omega_l) \cap V_{\underline n_l} )  \supset \prod_{i=1}^k D_i$.
We will use the 
  following fact. 
  
  \begin{fact*}
Let $\Omega'\Subset  \Omega$ be 
an open subset.
Let
 $W_v, W_h$ be two non-empty $k$-dimensional closed analytic subsets of 
 $B_0$
 with
  $\pi_M (W_v) \subset  \Omega'$
and $\underline{\pi} (W_h)\subset \D (0, \eta/2)^k$.
Then $W_h\cap W_v \neq \emptyset$.
  \end{fact*}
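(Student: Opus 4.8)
The plan is to use the two relative-compactness hypotheses to promote $W_v$ and $W_h$ to finite branched coverings of the two factors of $B_0=\Omega\times\D(0,\eta)^k$ (which has dimension $2k$, since we assumed $\dim M=k$), and then to produce an intersection point through a positive intersection-number computation. We may assume $\Omega$ is a ball and that $W_v$ and $W_h$ are purely $k$-dimensional.

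First I would show that $\underline{\pi}|_{W_v}\colon W_v\to \D(0,\eta)^k$ is proper and surjective. Properness: if $(x_n)\subset W_v$ with $\underline{\pi}(x_n)\to \underline w\in \D(0,\eta)^k$, then, extracting a subsequence, $\pi_M(x_n)\to\lambda\in\overline{\Omega'}\subset\Omega$ (using $\pi_M(W_v)\subset\Omega'\Subset\Omega$), hence $x_n\to(\lambda,\underline w)\in B_0$ and $(\lambda,\underline w)\in W_v$ as $W_v$ is closed in $B_0$. Finiteness of fibres: the fibre of $\underline{\pi}|_{W_v}$ over $\underline w$ is a closed analytic subset of $\Omega\times\{\underline w\}\cong\Omega\subseteq\C^k$ contained in $\pi_M(W_v)\times\{\underline w\}$ with $\pi_M(W_v)\Subset\Omega$, hence a compact analytic subset of an open subset of $\C^k$, hence finite by the maximum principle. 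By Remmert's proper mapping theorem $\underline{\pi}(W_v)$ is then an analytic subset of $\D(0,\eta)^k$ of pure dimension $k$, so it equals $\D(0,\eta)^k$, and $\underline{\pi}|_{W_v}$ is a branched covering of some degree $d_v\geq 1$; that is, $(\underline{\pi}|_{W_v})_*[W_v]=d_v[\D(0,\eta)^k]$. Exchanging the roles of the two projections and using $\underline{\pi}(W_h)\subset\D(0,\eta/2)^k\Subset\D(0,\eta)^k$, the same argument shows that $\pi_M|_{W_h}\colon W_h\to\Omega$ is a branched covering of some degree $d_h\geq 1$; in particular $\pi_M(W_h)=\Omega$.

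Next I would compute the intersection number. Since $W_v\cap W_h$ is closed in $B_0$ and contained in the compact set $\overline{\Omega'}\times\overline{\D(0,\eta/2)^k}\subset B_0$, it is compact, so the topological intersection number $W_v\cdot W_h\in\Z$ is well defined. Fix $\lambda_*\in\Omega$ and consider the homotopy $H_t(\lambda,w):=((1-t)\lambda+t\lambda_*,w)$, $t\in[0,1]$, which maps $B_0$ into itself because $\Omega$ is a ball. For all $t$ one has $\underline{\pi}(H_t(W_v))=\underline{\pi}(W_v)=\D(0,\eta)^k$ and $\pi_M(H_t(W_v))$ is contained in the convex hull of $\overline{\Omega'}\cup\{\lambda_*\}$, a compact subset of $\Omega$; hence $\bigcup_{t\in[0,1]}\big(H_t(W_v)\cap W_h\big)$ stays inside a fixed compact subset of $B_0$, and by homotopy invariance of the intersection number $W_v\cdot W_h=\big(H_1\big)_*[W_v]\cdot[W_h]$. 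Since $H_1|_{W_v}$ factors as $W_v\xrightarrow{\underline{\pi}}\D(0,\eta)^k\hookrightarrow\{\lambda_*\}\times\D(0,\eta)^k$, we get $(H_1)_*[W_v]=d_v[\{\lambda_*\}\times\D(0,\eta)^k]$, whence $W_v\cdot W_h=d_v\,\big(\{\lambda_*\}\times\D(0,\eta)^k\big)\cdot W_h$. Finally, $\lambda_*\in\Omega=\pi_M(W_h)$, so the complex analytic sets $\{\lambda_*\}\times\D(0,\eta)^k$ and $W_h$ do meet, and all their local intersection multiplicities are positive; thus this last intersection number is $\geq 1$, so $W_v\cdot W_h\geq d_v\geq 1$ and $W_v\cap W_h\neq\emptyset$.

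The hard point is the surjectivity in the second step: a $k$-dimensional $W_v$ with $\pi_M(W_v)\Subset\Omega$ need not project onto the fibre $\D(0,\eta)^k$ unless it is closed in $B_0$ (dropping closedness, $W_v=\Omega'\times\{0\}$ would satisfy all the remaining hypotheses yet not surject), and the mechanism rescuing us is exactly the maximum principle ruling out positive-dimensional fibres of $\underline{\pi}|_{W_v}$. The remaining intersection-number bookkeeping is routine; it may alternatively be done with currents, noting that $[W_v]$ and $[W_h]$ represent, via the two projections, the generators of the degree-$2k$ cohomology of $B_0$ with supports proper over each factor, whose cup product generates $H^{4k}_c(B_0)\cong\Z$.
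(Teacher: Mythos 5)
The paper states this Fact without proof, treating it as a piece of standard folklore from the intersection theory of vertical and horizontal analytic sets in a product (compare Dinh--Sibony or Dujardin on horizontal-like maps). Your argument is a correct, essentially self-contained rendering of that standard fact: the two relative-compactness hypotheses force $\underline{\pi}|_{W_v}$ and $\pi_M|_{W_h}$ to be proper with finite fibres, hence surjective branched coverings by Remmert and the maximum principle; a proper homotopy then reduces the intersection number $W_v\cdot W_h$ to $d_v\,d_h\geq 1$, and positivity of local multiplicities of complex-analytic intersections forces $W_v\cap W_h\neq\emptyset$. You correctly identify where closedness of $W_v$ in $B_0$ is used (it rules out the degenerate $W_v=\Omega'\times\{0\}$), and the alternative phrasing via $H^{2k}_{\mathrm{prop}}$-cohomology and $H^{4k}_c(B_0)\cong\Z$ is the cleanest version. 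The only point worth tightening is the opening ``we may assume $\Omega$ is a ball'': for the homotopy $H_t$ to keep $H_t(W_v)$ inside $B_0$ and the intersection compact, you need the convex hull of $\overline{\Omega'}\cup\{\lambda_*\}$ to lie in $\Omega$. This is harmless here, both because in the surrounding proof $\Omega$ is a chosen neighbourhood of $\lambda_0$ and may be taken to be a ball from the outset, and because one can alternatively first establish $\pi_M(W_h)=\Omega$ on the original $\Omega$ and then restrict to any ball $\tilde\Omega$ with $\overline{\Omega'}\subset\tilde\Omega\subset\Omega$ (the restriction of $W_h$ stays non-empty and the hypotheses persist); but as a free-standing statement the Fact implicitly requires $\Omega$ convex or at least that such a $\tilde\Omega$ exists, and it would be worth saying so explicitly.
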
 

Recall that $r'_i(\lam)\in \D(r'_i, \eta/10)$ for all $i$ and $\lam \in \Omega$. Hence 
the Fact, 
applied with $\Omega'=\Omega_l$, $W_v = \Gamma_l$ and $W_h= \{(\lam, y_1 -r'_1(\lam), \dots, y_k - r'_k (\lam))\}$, 
implies that, 
for any $\underline y = (y_1, \dots, y_k) \in\prod_{i=1}^k \D(r'_i,\eta/4)$, 
there exists a $\lam \in \Omega_l$
such that
\[
Q^{m_i + l s_i}_{z_i} (c_i(\lam))-r'_i (\lam)=
y_i -r'_i(\lam) \mbox{ for all } 1\leq i \leq  k.
\]
This implies that $\underline{\pi} (V_{\underline n_l}) \supset\prod_{i=1}^k D_i$, as desired. The proof is complete.
\end{proof}

\begin{remark}
As is the case in \cite{astorg2019collet}, it is enough to make a weaker assumption in
 Proposition \ref{prop_sufficient_condition_bifk}, namely
that the critical orbits fall in the motion of some hyperbolic set. The proof is slightly more
involved in that situation  (as is the case in \cite{astorg2019collet}).
We prefer to state only the simple criterion based on repelling periodic orbits since this simpler version
will be enough to deduce our main result.
\end{remark}

\section{Creating multiple bifurcations: a geometric method}\label{s:machine}

In this section we develop our method
to construct multiple bifurcations
(in the form of Misiurewicz parameters)
starting from a simple one. In the next section we will ensure the applicability
of this method. First, let us introduce 
the following definition.

\begin{defi}\label{def:goodmis}
	Let $(f_\lambda)_{\lambda \in M}$ be a 
	holomorphic family of polynomial skew products
	over a fixed base polynomial $p$.
	We say that $M$ 
	is a \emph{good Misiurewicz family}, or that $M$
	has a \emph{persistently good Misiurewicz relation}
	$f_\lam^{n_0}(z_0,c(\lam))=(z_1,w_1(\lam))$
	if the Misiurewicz relation $f_\lam^{n_0}(z_0,c(\lam))=(z_1,w_1(\lam))$
	(where 
	$(z_1, w_1(\lam))$ is a repelling periodic point of period $m$ for $f_\lam$) is 
	persistent in $M$,
	and if 
	moreover
	\begin{enumerate}[label={\bf (G\arabic*)}]
		\item\label{item_good_non_constant} the vertical eigenvalue $B(\lambda):=(Q_{\lam,z_1}^m)'(w_1(\lambda))$ is non-constant on $M$;
		\item\label{item_good_vertical_like} for all $\lam \in M$, $(z_1,w_1(\lam))$ is vertical-like;
		\item\label{item_good_znotcrit} $(p^{n_0})'(z_0) \neq 0$ and $z_0 \notin \{p^i(z_1), 1 \leq i \leq m \}$;
		\item\label{item_good_simple} for all $\lam \in M$, $c(\lam)$ is a simple root of $q_{\lam,z_0}'$;
		\item\label{item_good_nontangent} for all $\lam$, if $L_\lam$ denote the unique component of $\crit(f_\lam)$ passing through $(z_0,c(\lam))$, then $f_\lam^{n_0}(L_\lam)$ is regular at $(z_1, w_1(\lam))$ and is not tangent to an eigenspace of $df_\lam^m(z_1,w_1(\lam)) $.
	\end{enumerate}
	A parameter $\lam_0 \in M$ satisfying all the conditions above 
	will be called a \emph{good Misiurewicz parameter}.
\end{defi}

Observe that a good Misiurewicz family in $\skpd$
 is, in general,
 an open subset of an algebraic hypersurface of $\skpd$.
The next Proposition is the key technical result of our argument.

\begin{prop}\label{prop:secmis}
	Let $(f_\lambda)_{\lambda \in M}$
 be a holomorphic family of polynomial
	skew products 
	over a fixed base polynomial $p$ and with a persistently good Misiurewicz relation
	 $(z_1,w_1 (\lam)):=f_{\lambda}^{N_0}(z_0,c_0 (\lam))$.
	 There exists 
	a dense subset
	$S \subset M$ such that
 for all $\lam_\infty \in S
$
and
	for every
 $(z'',w'')$
	 repelling periodic point in the limit set of a
	vertical-like IFS for $f_{\lam_{\infty}}$, 
	 there exists a sequence $\lam_n \to \lam_\infty$
	such that $f_{\lam_n}$ has a Misiurewicz relation
	of the form $f_{\lam_n}^{N_n}( y_n,  c_n(\lam_n)) = (z'', w''(\lam_n))$
	(where $(z'',w''(\lam))$ is the  holomorphic motion as repelling periodic point  of $(z'',w'')$ in a neighbourhood
	of $\lam_\infty$)
	which is non-persistent on $M$ and satisfies \ref{item_good_vertical_like}, 
\ref{item_good_znotcrit},	 \ref{item_good_simple},
	and \ref{item_good_nontangent}   on a neighbourhood of 
	$\lam_n$ in $M_{(y_n, c_n(\lam_n)), (z'', w''(\lam_n), N_n)} \subset M$. 
\end{prop}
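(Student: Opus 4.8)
The strategy is an "inclination lemma" argument applied to the postcritical hypersurface. Near $\lam_\infty$, the good Misiurewicz relation $f_\lam^{N_0}(z_0,c_0(\lam))=(z_1,w_1(\lam))$ is persistent, so the postcritical hypersurface $f_\lam^{N_0}(L_\lam)$ passes through the repelling periodic point $(z_1,w_1(\lam))$ transversally to its eigenspaces (by \ref{item_good_nontangent}), and $(z_1,w_1(\lam))$ is vertical-like (by \ref{item_good_vertical_like}). The idea is: first move to a parameter where one picks up a transverse intersection of the backward orbit of the vertical-like hyperbolic set $H$ (containing $(z'',w'')$) with the forward images of $L_\lam$; then iterate $f$ forward along $H$ so that, by the cone condition (Definition \ref{defi-hyp-vert-like}) and the domination $|B|>|A|$, the images of a piece of $f_\lam^{N_0}(L_\lam)$ stretch across a whole fibred box of the IFS and hence wrap around $(z'',w'')$, producing the desired Misiurewicz relation $f_{\lam_n}^{N_n}(y_n,c_n(\lam_n))=(z'',w''(\lam_n))$.

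**Key steps, in order.** First I would fix a vertical-like IFS $(W,g_1,\dots,g_m)$ for $f_{\lam_\infty}$ with limit set $K$ a vertical-like hyperbolic set containing $(z'',w'')$, and choose a small ball $B$ around $(z_1,w_1(\lam_\infty))$ inside which the dynamics of $f_\lam^m$ is linearizable/dominated uniformly in $\lam$. Second, using \ref{item_good_non_constant}—the vertical multiplier $B(\lam)$ is non-constant on $M$—I would argue that after a small perturbation $\lam$ of $\lam_\infty$ (this is where the density of $S$ enters: $S$ will be the set where certain countably many "genericity" conditions hold, e.g. $B(\lam)$ is not a root of unity, and where certain resultants do not vanish identically) one can follow the local unstable manifold of $(z_1,w_1(\lam))$, which is a graph over the horizontal direction by the cone condition, and push it forward: by the inclination lemma for the dominated splitting, $f_\lam^{N}(\text{piece of }f_\lam^{N_0}(L_\lam))$ converges in the $C^1$ topology (as $N\to\infty$ along multiples of $m$) to a piece of the unstable manifold of $H$, which meets $W$ transversally and crosses the fibred box in the vertical-like direction. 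Third, I would intersect this long image with the cylinder $\bigcap_{j} g_{i_j}(W)$ selecting the itinerary converging to $(z'',w'')$: since the image is a graph stretching across $W$ with bounded horizontal slope, it meets the shrinking cylinders, producing a point $(y_N, \zeta_N)$ with $c_N(\lam):=$ the critical point in the fibre $z=y_N$ (pulled back from $c_0$; here $y_N=p^{N_0+?}(\text{something})$, a preimage structure from the base) and $f_\lam^{N_n}(y_N,c_N(\lam))$ landing exactly on $(z'',w''(\lam))$ after adjusting $\lam$ slightly so the landing is exact rather than approximate — this last adjustment is an implicit function theorem / open mapping argument using that the forward map is a submersion onto the fibre transverse to $W$. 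Fourth, I would verify that the newly created relation is non-persistent on $M$ (using \ref{item_good_non_constant} again, so that $B(\lam)$ genuinely varies and the relation cannot be persistent) and that it inherits \ref{item_good_vertical_like} (because $(z'',w'')$ lies in a vertical-like hyperbolic set, hence is vertical-like as a periodic point), \ref{item_good_znotcrit} (the new $y_n$ is a preimage of a point outside the relevant finite orbit — achievable by choosing the itinerary in the IFS avoiding finitely many fibres, using \ref{item_vlifs_horizontal}), \ref{item_good_simple} (the critical point $c_N$ is a simple root since it is a pullback of the simple $c_0$ along the unbranched part, or directly by a dimension count), and \ref{item_good_nontangent} (the postcritical surface still crosses $W$ transversally to the eigenspaces by the inclination lemma convergence, since the unstable manifold of $H$ is itself not tangent to the vertical).

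**Main obstacle.** The hard part will be the simultaneous control of two moving objects as the parameter varies: the postcritical hypersurface $f_\lam^{N}(L_\lam)$ and the holomorphically-moving repelling point $(z'',w''(\lam))$ together with its local unstable/stable manifolds. One must establish a parametrized inclination lemma—uniform over a neighbourhood of $\lam_\infty$ in $M$, and with $C^1$ (indeed holomorphic graph) control good enough that the landing condition $f_{\lam_n}^{N_n}(y_n,c_n(\lam_n))=(z'',w''(\lam_n))$ can be solved exactly by the implicit function theorem, rather than just approximately. A secondary subtlety is ensuring the transversality \ref{item_good_nontangent} is preserved: one needs the limiting unstable manifold of the IFS limit set $K$ to be nowhere tangent to the vertical eigenspace, which follows from $K$ being vertical-like (the cone condition forces $E_h$ uniformly away from vertical), but making this quantitative enough to survive the perturbation and the finite iteration requires care. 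Finally, the "dense subset $S$" must be pinned down as the complement of a countable union of proper analytic subsets of $M$ (where various resultants vanish identically, $B(\lam)$ is constant on a component, etc.), using \ref{item_good_non_constant} to guarantee these are indeed proper.
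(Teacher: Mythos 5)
Your strategy is in the right spirit: you correctly identify that the mechanism is to push a piece of the postcritical hypersurface through a linearizing neighbourhood of the persistently good repelling point $(z_1,w_1(\lam))$, exploit the vertical domination $|B|>|A|$, and land it on a preimage of $(z'',w'')$ inside the vertical-like hyperbolic set; you also correctly see that density of $S$ comes from \ref{item_good_non_constant}. But the crux — why one can solve the landing equation $f_\lam^{N_n}(y_n,c_n(\lam))=(z'',w''(\lam))$ \emph{exactly} in $\lam$ — is precisely the step you defer to an implicit-function-theorem/submersivity claim that you never establish. It is not automatic that the parametrized forward image of the postcritical set moves in $\lam$ faster than the holomorphic motion of $(z'',w'')$ and its stable slicing; you name this as ``the main obstacle'' and leave it open, whereas it is the entire content of the paper's argument.

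Concretely, the paper does not prove a parametrized inclination lemma. Instead, after restricting to $\lam_\infty\in S$ where $dB(\lam_\infty)\neq 0$ (your genericity, recorded as \ref{item_s_db}) and where the non-resonance $\log B(\lam_\infty)\notin\R\log A$ holds (condition \ref{item_s_resonance} — this is not the same as ``$B$ is not a root of unity'' and is what makes the linearization depend holomorphically on $\lam$), it linearizes at $(z_1,w_1)$ and writes the pushed-forward critical value curve in the fibre $\{z=z_k\}$ as a one-parameter family of \emph{functions of the parameter}
$\phi_k(\lam)=\beta(z_kA^{-m_k},\lam)B(\lam)^{m_k}$.
Lemma \ref{lem:renorm} shows $|\phi_k'(0)|\asymp m_k\to\infty$, so $(\phi_k)$ is not normal at $0$. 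Montel's theorem is then applied against \emph{two} distinct holomorphically-moving backward-orbit sequences $w_{k,1}(\lam)$, $w_{k,2}(\lam)$ of $(z'',w'')$, produced by Lemma \ref{lem:goodperdense} from conditions \ref{item_vlifs_vertical} and \ref{item_vlifs_horizontal} of the vertical-like IFS: since $(\phi_k)$ cannot omit both, a coincidence $\phi_k(\lam_k)=w_{k,i_k}(\lam_k)$ occurs. Your proposal has no analogue of this two-target construction, and without it non-normality alone gives no collision. So the geometry and the hypotheses are right, but the decisive lemma — non-normality of the renormalized family in the parameter, plus the Montel-with-two-targets step — is missing, and the submersivity you invoke would require exactly the derivative computation that Lemma \ref{lem:renorm} supplies.
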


\begin{cor}\label{cor:bifm=m}
Let $(f_\lambda)_{\lambda \in M}$ be a holomorphic family of polynomial
	skew products 
	with a persistently good Misiurewicz relation. Then,
	$\Bif(M)=M$.
\end{cor}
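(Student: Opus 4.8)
\textbf{Proof plan for Corollary \ref{cor:bifm=m}.}
The plan is to deduce the Corollary directly from Proposition \ref{prop:secmis} together with Proposition \ref{prop:modrep} and the fact that Misiurewicz parameters are dense in the bifurcation locus. Since $M$ has a persistently good Misiurewicz relation, every parameter of $M$ is a (persistent) Misiurewicz parameter, so in particular $\Bif(M)$ is non-empty; but more is true, and what we actually want is $\Bif(M)=M$, i.e.\ that $M$ is entirely contained in the bifurcation locus. The key point is that Proposition \ref{prop:secmis} produces, near a dense set $S\subset M$, genuinely \emph{non-persistent} Misiurewicz relations, and such parameters lie in $\Bif(M)$ by the characterization of the bifurcation locus recalled in Subsection \ref{ss-misiurewicz-relations} (from \cite{bbd2015, b_misiurewicz}).

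First I would fix $\lam_\infty\in S$. By Proposition \ref{prop:modrep} — whose hypotheses on $p$ are in force throughout this setting — the map $f_{\lam_\infty}$ admits a vertical-like IFS, and its limit set is a vertical-like hyperbolic set containing a repelling periodic point $(z'',w'')$ (any periodic point in the limit set works; condition \ref{item_vlifs_limit} guarantees it is repelling and vertical-like). Apply Proposition \ref{prop:secmis} to this $(z'',w'')$: it yields a sequence $\lam_n\to\lam_\infty$ such that $f_{\lam_n}$ has a Misiurewicz relation $f_{\lam_n}^{N_n}(y_n,c_n(\lam_n))=(z'',w''(\lam_n))$ which is \emph{non-persistent} on $M$. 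A non-persistent Misiurewicz relation forces $\lam_n\in\Bif(M)$: indeed the equation $f_\lam^{N_n}(y_n,c_n(\lam))=(z'',w''(\lam))$ defines a proper analytic hypersurface through $\lam_n$, across which a critical orbit has a non-persistent collision with the motion of a repelling cycle, and this is exactly the classical mechanism producing bifurcation parameters (Montel-type argument: if $\lam_n$ were stable, the repelling cycle and the critical point would move holomorphically and the collision would be persistent, contradiction). Hence $\lam_n\in\Bif(M)$ for all $n$, and since $\Bif(M)$ is closed and $\lam_n\to\lam_\infty$, we get $\lam_\infty\in\Bif(M)$.

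This shows $S\subseteq\Bif(M)$. Since $S$ is dense in $M$ and $\Bif(M)$ is closed in $M$, we conclude $M=\overline{S}\subseteq\Bif(M)\subseteq M$, whence $\Bif(M)=M$, as desired.

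The routine part is the passage from ``non-persistent Misiurewicz relation'' to ``point in the bifurcation locus'', which is standard and cited; the only thing one must be slightly careful about is that the relation produced by Proposition \ref{prop:secmis} is genuinely non-persistent on the full family $M$ (this is part of the statement of that Proposition, so there is nothing to check here). The real content has already been absorbed into Proposition \ref{prop:secmis} and Proposition \ref{prop:modrep}; the Corollary itself is a short deduction, and I do not expect any serious obstacle beyond correctly quoting the density of $S$, the closedness of $\Bif(M)$, and the Misiurewicz characterization of the bifurcation locus.
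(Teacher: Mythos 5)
Your proof is correct and is essentially the paper's argument spelled out: the paper's own proof is the one-liner that Misiurewicz parameters lie in the bifurcation locus, implicitly relying on the preceding Proposition \ref{prop:secmis} to produce non-persistent Misiurewicz parameters accumulating on the dense set $S$, and on the closedness of $\Bif(M)$ to conclude $\Bif(M)=M$. You are also right to flag, more explicitly than the paper does, that one must have a vertical-like IFS available for Proposition \ref{prop:secmis} to produce anything, which in turn rests on the standing hypotheses on $p$ via Proposition \ref{prop:modrep}.
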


\begin{proof}
	The assertion follows from the fact that Misiurewicz parameters belong to the bifurcation locus, see	
	\cite{bbd2015,b_misiurewicz}.
\end{proof}

The remaining part of this section is devoted to proving Proposition \ref{prop:secmis}.
 We start defining the set $S$.
\begin{defi}\label{defi-S}
	Let $M$ be a good Misiurewicz family, and let 
	$f_\lam^{n_0}(z_0,c(\lam))=(z_1,w_1(\lam))$ be a
	persistent Misiurewicz relation satisfying the requirements 
	of Definition \ref{def:goodmis}.
	
	We define the set $S \subset M$ to be the set of $\lambda_\infty \in M$ for which 
	each of the following properties holds: 
	\begin{enumerate}[label={\bf (S\arabic*)}]
		\item\label{item_s_db}	
		$d_\lam B (\lam_\infty)\neq 0$;
		\item\label{item_s_resonance} $\log B(\lam_\infty) \notin \R \log A$.
	\end{enumerate}
\end{defi}

Note that
$S$ is 
open and dense
in $M$.
From now on, we fix an arbitrary $\lambda_{\infty} \in S$, and we choose a one-dimensional disk in local coordinates
in $M$ transverse to a level set of $B$ in which $\lambda_{\infty}=0$ (hence $\frac{dB}{d\lambda} (0) \neq 0$).
The proof of Proposition \ref{prop:secmis} will mostly use local arguments in phase space. Therefore, 
we will work in local linearizing coordinates near $(z_1,w_1)$; in particular, in the rest of this 
section we will take $(z_1,w_1)=(0,0)$, and we will 
assume that $m=1$
(which we can do up to passing to an iterate).

By item \ref{item_s_resonance}
of the definition of $S$, there are no resonances between the eigenvalues
of this fixed point $(0,0)$ for $\lam$ close to $\lam_\infty=0$. We may therefore
assume that the  fixed point $(0,0)$ is linearizable for $f_\lam$; moreover the linearizing
map can be chosen to depend holomorphically on the parameter.
More precisely, we can fix a 
neighbourhood $U$
of $(0,0)$
such
that these linearizing coordinates are defined 
for $(z,w) \in U$ for all $f_\lam$
with $|\lambda|$
small enough.
So $f_\lambda$ acts in those coordinates as 
the linear map $(z,w) \mapsto (Az, B(\lambda) w)$.

It follows from the Implicit Function Theorem and 
\ref{item_good_simple} that there is a unique component of $\crit(f_\lam)$ passing through $(z_0,c(\lam))$, and 
that this component is smooth and can be locally described as a graph of the form $w=\tilde \beta(z,\lam)$, 
for some holomorphic germ $\tilde \beta$. Setting $\beta(z,\lam):=Q_{\lam,z}^{N_0}(\tilde \beta(z,\lam))$, 
this implies that 
the graph $w=\beta(z,\lam)$ is a local parametrization of a component 
$L_\lambda$ of $f_\lambda^{N_0}(\crit(f_\lambda))$. 
The assumption 
\ref{item_good_nontangent} 
and our choice of local coordinates
imply that 
the holomorphic map $z \mapsto \beta(z,\lambda)$ is not constantly equal to $0$, and 
moreover  that $\beta_1 := \frac{\partial \beta}{\partial z}(0,0) \neq 0$.

\begin{lem}\label{lem:renorm}
	Let $K\subset \C^*$
	be a compact set.  Let  $(z_k)_{k\in\N}$ be a sequence in $J_p$
	such that 
	$z_k \to 0$, and $(m_k)_{k \in \N}$ be a sequence of integers such that 
	$z_k A^{-m_k} B^{m_k}\in K$
	for all $k$.
	Set $\phi_k(\lam):=\beta(z_k A^{-m_k},\lam) B(\lam)^{m_k}$. 
	Then the sequence $(\phi_k)_{k\in \N}$ is not normal at $\lam=0$.
\end{lem}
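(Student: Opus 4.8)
The plan is to put $\phi_k$ in a renormalised form near $\lambda=0$ and read off the failure of normality from one explicit factor. First I would extract the consequences of the hypotheses. Writing $\zeta_k:=z_kA^{-m_k}B^{m_k}\in K$, the facts that $z_k\to 0$, that $K$ is a compact subset of $\C^*$, and that $|B/A|>1$ (this is precisely vertical-likeness, together with $|A|>1$ since $(0,0)$ is repelling) force $m_k\to\infty$; hence also $w_k:=z_kA^{-m_k}=\zeta_kB^{-m_k}\to 0$. Since the Misiurewicz relation is persistent, the curve $L_\lambda$ passes through the fixed point $(0,0)$ for every $\lambda$, so $\beta(0,\lambda)\equiv 0$; expanding $\beta(z,\lambda)=\beta_1(\lambda)z+\mathcal{O}(z^2)$ on a fixed polydisc about $(0,0)$ — with $\beta_1(0)=\beta_1\neq 0$, hence $\beta_1(\lambda)\neq 0$ for $|\lambda|$ small — and substituting $z=w_k$, I would obtain, uniformly on a small disc $\{|\lambda|<\delta\}$,
\[
\phi_k(\lambda)=\beta_1(\lambda)\,\zeta_k\,(B(\lambda)/B)^{m_k}+e_k(\lambda),
\]
where $|e_k(\lambda)|\leq C\,|w_k|^2\,|B(\lambda)|^{m_k}=C\,|\zeta_k|^2\,(|B(\lambda)|/|B|^2)^{m_k}$. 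Shrinking $\delta$ so that $|B(\lambda)|/|B|^2\leq\theta<1$ on that disc (possible since $|B|>1$), and using that $|\zeta_k|$ is bounded, one gets $\sup_{|\lambda|<\delta}|e_k(\lambda)|\to 0$ as $k\to\infty$.

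The leading term then does the work. At $\lambda=0$ it equals $\beta_1(0)\zeta_k$, which stays in a fixed annulus of $\C^*$ since $K\subset\C^*$ is compact; in particular $\phi_k(0)$ is bounded and bounded away from $0$ and $\infty$. On the other hand $\log|B(\lambda)|$ is a non-constant harmonic function near $0$ — non-constant because a holomorphic map of constant modulus is constant while $\tfrac{dB}{d\lambda}(0)\neq 0$ — so by the maximum principle there exist $\lambda^\ast$ arbitrarily close to $0$ with $|B(\lambda^\ast)|>|B|$, and for any such $\lambda^\ast$ one has $|(B(\lambda^\ast)/B)^{m_k}|=|B(\lambda^\ast)/B|^{m_k}\to\infty$. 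I would then argue by contradiction: if $\{\phi_k\}$ were normal on some disc $V\ni 0$, extract a subsequence converging locally uniformly in $\P^1$ on $V$, and refine it so that $\zeta_k\to\zeta_\infty\in K$. Since $\phi_k(0)\to\beta_1\zeta_\infty$, which is finite and nonzero, the limit is holomorphic near $0$, so after shrinking $V$ the convergence is uniform in the Euclidean metric on $\overline{V}$ and $\sup_k\sup_{\overline{V}}|\phi_k|<\infty$. Picking $\lambda^\ast\in V$ with $|B(\lambda^\ast)|>|B|$ then gives $|\phi_k(\lambda^\ast)|\geq|\beta_1(\lambda^\ast)|\,|\zeta_k|\,|B(\lambda^\ast)/B|^{m_k}-|e_k(\lambda^\ast)|\to\infty$, a contradiction. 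As $V$ was an arbitrary neighbourhood of $0$, $\{\phi_k\}$ is not normal at $\lambda=0$.

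The deduction $m_k\to\infty$, the Taylor expansion of $\beta$, and the Montel-type contradiction are routine. The one point that needs care — and that I expect to be the \emph{main (mild) obstacle} — is the uniform control of the error $e_k$: it dictates how small the parameter disc must be taken (via $|B(\lambda)|/|B|^2<1$), and it rests on $\beta(0,\lambda)\equiv 0$, which is exactly where persistence of the Misiurewicz relation (the critical orbit landing at the repelling fixed point) is used.
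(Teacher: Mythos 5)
Your proof is correct, and it takes a genuinely different (though closely related) final step from the paper's. Both arguments hinge on the same structural facts extracted from the good Misiurewicz set-up: $\beta(0,\lambda)\equiv 0$ (persistence of the relation), $\beta_1=\frac{\partial\beta}{\partial z}(0,0)\neq 0$ (non-tangency, \ref{item_good_nontangent}), $|B|>1$ and $|B/A|>1$ (repelling and vertical-like), and $B'(0)\neq 0$ (the coordinate choice after Definition \ref{defi-S}); and both reduce to the same leading term $\beta_1(\lambda)\,\zeta_k\,(B(\lambda)/B)^{m_k}$, with $\zeta_k=z_kA^{-m_k}B^{m_k}\in K$ bounded in modulus and bounded away from $0$. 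Where you diverge is in how non-normality is extracted. The paper differentiates once at $\lambda=0$, keeps track of the four $\mathcal{O}$-terms, and finds $|\phi_k'(0)|\asymp m_k\to\infty$ while $\phi_k(0)$ stays bounded, then invokes (implicitly) Marty's criterion. You instead control the tail $e_k$ uniformly on a fixed small disc using $|B(\lambda)|/|B|^2\le\theta<1$, note that $\phi_k(0)$ stays in a fixed annulus, use the maximum principle for the non-constant harmonic function $\log|B|$ to produce $\lambda^\ast$ arbitrarily close to $0$ where $|B(\lambda^\ast)|>|B|$, and conclude $|\phi_k(\lambda^\ast)|\to\infty$; a Montel-type contradiction finishes. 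Your route is slightly longer to set up (the error estimate, the max-principle step) but avoids bookkeeping on the size of $\phi_k'(0)$ and makes the dependence on the hypothesis $B'(0)\neq 0$ more transparent; the paper's route is more compact once one accepts the $\mathcal{O}$-estimates. Both are clean, and you correctly identified $\beta(0,\lambda)\equiv 0$ as the crucial input that makes the error term small.
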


\begin{proof}
	Let us compute the derivative of $\phi_k$ at $0$:
	\begin{align*}
		\frac{d \phi_k}{d \lambda}(0) &=\frac{\partial \beta}{\partial \lam}(z_k A^{-m_k},0) B^{m_k} + \beta(z_k A^{-m_k},0) B'(0) m_k B^{m_k-1} \\
		&=\mathcal{O}\left( z_k A^{-m_k} B^{m_k} \right) 
		+ \beta_1 z_k A^{-m_k} B'(0) m_k B^{m_k-1} + \mathcal{O}\left( z_k^2 A^{-2 m_k} B^{m_k}	\right)\\
		& + \mathcal {O} \left( z_k^2 A^{-2 m_k} B^{m_k} m_k	 \right).
	\end{align*}
Since  $\beta_1
 \neq 0$, by the choice of $m_k$, we have
	\begin{align*}
		\frac{d \phi_k}{d \lambda}(0) = \beta_1 z_k A^{-m_k} B'(0) m_k B^{m_k-1} + \mathcal {O}(1)   \asymp m_k,
	\end{align*}
	hence $\lim_{k \to +\infty} |\frac{d\phi_k}{d\lambda} (0)|=+\infty$. This proves 
	the non-normality of $(\phi_k)$ at $0$.
\end{proof}

\begin{lem}\label{lem:goodperdense}
	 Let $H_0$ be the limit set of a vertical-like IFS
	 for $f_{0}$.
	  Let $(z',w'), (z'',w'') \in K$
	  be periodic points
	   and let $U$ be an open neighbourhood of $(z',w')$.
	 There exist $w_1\neq w_2\in \C$, both distinct from $w'$ and
	 with $(z',w_1), (z',w_2)\in H_0\cap U$, and sequences $(z_k,w_{k,i})$ (with $1 \leq i \leq 2$) 
with $\lim_{k \to +\infty} (z_k, w_{k,i})=(z', w_i)$
and
	 such that, for all $k \in \N$
	 and $i \in \{1,2\}$,
	 \begin{enumerate}
	 	\item there exists $n_k \in \N$ such that $f^{n_k}(z_k, w_{k,i})=(z'',w'')$;
	 	\item $(z_k, w_{k,i}) \in H_0 \cap U$;
	 	\item $z_k \neq z'$. 
	 \end{enumerate}
\end{lem}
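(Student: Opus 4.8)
The plan is to realize $H_0$ as a full shift and to produce the required points purely symbolically. Write the vertical-like IFS as $\{g_1,\dots,g_m\}$, a family of inverse branches of $f^{-n}$ with $g_i(W)\subseteq W$, so that $H_0=\bigcap_{N\geq 0}\bigcup_{(i_1,\dots,i_N)}g_{i_1}\circ\cdots\circ g_{i_N}(W)$ carries a coding map $\Pi\colon\{1,\dots,m\}^{\N}\to H_0$ intertwining the one-sided shift $\sigma$ with $f^n|_{H_0}$. Since $H_0$ is a repelling hyperbolic set by \ref{item_vlifs_limit}, the branches are uniform contractions, so the cylinders $\Pi([\omega_1,\dots,\omega_k])$ have diameters tending to $0$ uniformly; in particular $\Pi$ is continuous, $\Pi(\tau)$ is the unique point coded by $\tau$, and if two codes agree on the first $k$ symbols the corresponding points lie in a common depth-$k$ cylinder and hence are $o(1)$-close as $k\to\infty$. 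Group the symbols into classes, declaring $[i]=[j]$ when $\pi_z(g_i(W))=\pi_z(g_j(W))$: by the remark following Definition \ref{defi-vert-like-ifs} distinct classes have disjoint $z$-projections, so $\pi_z\circ\Pi$ factors through the map $\omega\mapsto([\omega_1],[\omega_2],\dots)$ into the full shift on the set of classes, and depth-$k$ base-cylinders with different $k$-th symbol-class are disjoint (so a point of $H_p$ has a unique class-code); moreover there are at least two classes by \ref{item_vlifs_vertical}--\ref{item_vlifs_horizontal}.

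First I would pick the two fibre-values. Since $(z',w')\in H_0$ and, by \ref{item_vlifs_vertical}, the vertical slice $H_0\cap(\{z'\}\times\C)$ is a Cantor set, $w'$ is not isolated in it; choose $w_1\neq w_2$ in this slice, both distinct from $w'$ and so close to $w'$ that $(z',w_1),(z',w_2)\in H_0\cap U$. Let $\omega^{(1)},\omega^{(2)}$ be codes of $(z',w_1),(z',w_2)$. As both points lie over $z'$, and $z'$ has a unique class-code, $\omega^{(1)}$ and $\omega^{(2)}$ share the same class-sequence (which codes $z'$). Let $\tau$ be a code of the periodic point $(z'',w'')$, so $\Pi(\tau)=(z'',w'')$.

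Next, for each large $k$ I would define the approximants. Fix a symbol $e_k\in\{1,\dots,m\}$ with $[e_k]\neq[\omega^{(1)}_{k+1}]=[\omega^{(2)}_{k+1}]$ (possible, since there are at least two classes), put $n_k:=n(k+1)$, and set
\[
\sigma^{(k,i)}:=(\omega^{(i)}_1,\dots,\omega^{(i)}_k,\,e_k,\,\tau_1,\tau_2,\dots),\qquad (z_k,w_{k,i}):=\Pi\big(\sigma^{(k,i)}\big)\in H_0 .
\]
Then $\sigma^{k+1}\big(\sigma^{(k,i)}\big)=\tau$, hence $f^{n_k}(z_k,w_{k,i})=\Pi(\tau)=(z'',w'')$, which is (1). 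Since $\sigma^{(k,i)}$ agrees with $\omega^{(i)}$ on the first $k$ symbols, $(z_k,w_{k,i})$ and $(z',w_i)$ lie in a common depth-$k$ cylinder, so $(z_k,w_{k,i})\to(z',w_i)$ and therefore $(z_k,w_{k,i})\in H_0\cap U$ for all large $k$, which is (2). The base point $z_k$ is independent of $i$ because $\sigma^{(k,1)}$ and $\sigma^{(k,2)}$ have the same class-sequence. Finally, the class-sequence of $\sigma^{(k,i)}$ agrees with the class-code of $z'$ up to position $k$ but carries $[e_k]\neq[\omega^{(i)}_{k+1}]$ at position $k+1$; as depth-$(k+1)$ base-cylinders with different $(k{+}1)$-st class are disjoint, $z_k\neq z'$, which is (3). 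This yields the desired $w_1,w_2$ and sequences.

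The genuinely delicate part is the structural input recorded in the first paragraph: the existence of the coding $\Pi$, its continuity with cylinders shrinking to points, the intertwining $\Pi\circ\sigma=f^n\circ\Pi$, and the disjointness of distinct base symbol-classes (which is what lets a single mismatched symbol force $z_k\neq z'$). The former is standard hyperbolic/IFS theory once \ref{item_vlifs_limit} is invoked, and the latter is exactly the observation after Definition \ref{defi-vert-like-ifs}; I expect most of the write-up to be the careful statement of this symbolic model, with the verification of (1)–(3) then being essentially combinatorial bookkeeping.
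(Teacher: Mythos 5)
Your proposal is correct, and it takes a genuinely different route from the paper's. The paper chooses $(z',w_1)$ and $(z',w_2)$ to be \emph{periodic} points of the same period in the fibre, realizes each as the unique fixed point of a finite composition $G_i = g_{j_1}\circ\cdots\circ g_{j_\ell}$ of IFS branches (with $G_1, G_2$ agreeing on the base), and then defines $(z_k, w_{k,i}) := G_i^{k+k_0}(z'',w'')$. To guarantee $z_k \neq z'$ the paper must distinguish two cases: if $z''$ is not in the $p$-orbit of $z'$ this is automatic, and otherwise $(z'',w'')$ is first replaced by a generic $H_0$-preimage $(z''',w''')$ whose base coordinate avoids the orbit of $z'$. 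Your symbolic construction sidesteps both of these devices. You do not need $w_1, w_2$ to be periodic — any two distinct points of the Cantor slice near $w'$ suffice, since all you use is that they are coded. And rather than iterating a fixed composition, you splice the code of $(z'',w'')$ onto a length-$k$ prefix of the code of $(z',w_i)$ with a single mismatched class-symbol $e_k$ inserted at position $k+1$; the disjointness of base projections of distinct classes then forces $z_k \neq z'$ by pure combinatorics, with no case distinction and no appeal to density of preimages. Both arguments rest on the same structural inputs — the contracting coding of $H_0$ and the dichotomy (equal or disjoint) for base projections of IFS branches recorded after Definition~\ref{defi-vert-like-ifs} — so neither is fundamentally deeper, but yours is cleaner in that it handles the ``$z''$ in the orbit of $z'$'' situation with no extra work. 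Downstream (in the proof of Proposition~\ref{prop:secmis}) only the properties (1)--(3) of the sequence are used, not the specific form of the approximants, so the substitution is harmless.
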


\begin{proof}
	First, let $(z',w_1)$ and $(z',w_2)$ be two periodic points in $H_0 \cap U$ such that $w', w_1$, 
	and $w_2$ are pairwise distinct.
	Such points exist since $H_0 \cap (\{z'\}\times \C)$ 
	contains (the image of) the limit set of a non-trivial IFS in $\C$ by
	the condition \ref{item_vlifs_vertical} in
	 Definition \ref{defi-vert-like-ifs}.
	 We can also assume that $(z',w_1)$ and $(z',w_2)$ have the same period.

	 There exists a finite sequence $g_{i_1}, \dots, g_{i_{\ell_1}}$ with the property that 
	  $(z', w_1)$ is the unique fixed point of the finite composition $G_1 := g_{i_1} \circ \dots \circ g_{i_\ell}$.
	  Similarly, $(z',w_2)$ is the unique fixed point of a finite composition $G_2 := g_{j_1} \circ \dots \circ g_{j_{\ell_2}}$.
	  Since $(z',w_1)$ and $(z',w_2)$ have the same period, we can assume that $\ell_1 = \ell_2 = \ell$.
	  Moreover, since $(z',w_1)$ and $(z',w_2)$ belong to the same vertical fibre,
 the maps $G_1$ and $G_2$ agree on the first coordinate.

We now construct the sequences $(z_k, w_{k,i})$. 
We first assume that $z''$
does not belong to the orbit of $z'$ under the base polynomial $p$.
In this case, given $k_0 \in \N$ it is enough to set
\[
(z_{k,i} , w_{k,i}) := G_i^{k+k_0} (z'',w'') \mbox{ for all } k\geq 1.
\]
Since $G_1$ and $G_2$ agree on the first coordinate, we have $z_{k,1}= z_{k,2}$ for all
$k$.  We set $z_k := z_{k,1}=z_{k,2}$ for all $k$.
For $i \in \{1,2\}$, the sequence $(z_{k} , w_{k,i})$
 converges to $(z',w_{i})$
as $k\to \infty$ by the definition of $G_i$.
When $k_0$ is taken sufficiently large, all the points in such sequences belong to $U \cap H_0$.
Finally, we have $z_{k,i}\neq z'$ for all $k$ since by assumption  $z''$ is not in the orbit of $z'$.

Suppose now that $z''$ belongs to the orbit of $z'$. 
Since backwards preimages of $(z'', w'')$
are dense in $H_0$, there exists at least one such preimage $z'''$
not in the orbit of $z'$. We choose $w'''$ so that $(z''', w''')\in H_0$ and
$(z'',w'')$ is in the orbit of $(z''',w''')$.
It is enough to apply the above argument to $(z''', w''')$
instead of $(z'',w'')$. The proof is complete.
\end{proof}

We are now ready to prove Proposition \ref{prop:secmis}.

\begin{proof}[Proof of Proposition \ref{prop:secmis}]
	We are working in the setting described after Definition \ref{defi-S}.
	We fix a periodic point $(z'',w'')$
	in the limit set $H_0$
	 of a vertical-like 
	IFS as in the statement. Observe that $(z'',w'')$ is vertical-like.
	We can choose $(z'',w'')$ with $z''$ not in the post-critical set of $p$.
	We also denote by $H_\lam$ the holomorphic motion 
	of $H_0$ as hyperbolic set in a neighbourhood of $\lam=0$.

	We let $(z_k, w_{k,i})$ be the sequences of preimages of $(z'',w'')$
	 given by Lemma \ref{lem:goodperdense}
	applied to $(z',w')=(0,0)$ and $(z'',w'')$.
	 Since all of these
	  points belong to $H_0$, 
	they all move holomorphically as $(z_k, w_{k,i}(\lam))$ over a common domain in parameter space. 
	Moreover, by the Definition \ref{defi-hyp-vert-like} and continuity,
	we may fix a vertical cone
	$C_{\alpha_0}:=\{u \in \C^2 :  |\langle u, (0,1)  \rangle |> \alpha_0  \|u\| \}$
	 such that,
	 for all $\lam$ in a neighbourhood of $0$ and $(x,y)\in H_\lam$,
	  we have
$(df_\lam)_{(x,y)} (C_{\alpha_0}) \Subset C_{\alpha_0}$. This implies that the non-vertical
Oseledets direction are uniformly bounded away from the vertical direction.

Fix $\eps>0$. We want to prove that there exists $\lam \in \D(0,\eps)$
	and $i_0 \in \{1,2\}$
	 such that 
	$(z_k, w_{k,i_0}(\lam))$
	(and hence $(z'',w'' (\lam))$)
	 is (non-persistently) in the post-critical set of $f_\lam$.
	To that end, observe that 
	\begin{equation}
		f^{m_k}(z_k A^{-m_k}, \beta(z_k A^{-m_k}, \lam))=(z_k,  \beta(z_k A^{-m_k}, \lam) B(\lam)^{m_k} ) = (z_k, \phi_k(\lam))
	\end{equation}
	is a post-critical point; it is therefore enough 
	to prove that there exist  sequences $\lam_k \to 0$
	and $(i_k) \in \{1,2\}^{\N}$ such that $w_{k,i_k}(\lam_k)=\phi_k(\lam_k)$.

	By Lemma \ref{lem:renorm}, the sequence $(\phi_k)_k$ is not normal at $\lam=0$.
	Therefore, by Montel theorem, the sequence of the graphs of the $\phi_k$'s 
	cannot avoid both those of $w_{k,1}$ and $w_{k,2}$. Hence there exist 
	$\lam_k \to 0$ and $(i_k) \in \{1,2\}^\N$ such that $\phi_k(\lam_k)=w_{k,i_k}(\lam_k)$.
	Up to a subsequence, we can assume that $i_k$ is constant.
	By Lemma \ref{lem:renorm} and the Definition
	\ref{defi-vert-like-ifs} of a vertical-like IFS, 
	the Misiurewicz relation constructed as above
	satisfies \ref{item_good_vertical_like}. 
By the choice of $(z'',w'')$ at the beginning of the proof,  this relation
	satisfies the first condition in \ref{item_good_znotcrit}. 
By taking only $k$ large enough, the second part of the condition is satisfied, too.
	Condition \ref{item_good_simple}
	holds since, by assumption, $c(0)$ 
	is a simple critical point for $q'_{0,z_0}$, hence the same is true for small $\lam$ and $z$
	close to $z_0$. 	
It remains to prove that the relation satisfies \ref{item_good_nontangent}.

	By the definition \eqref{eq_cone} of $C_\alpha$ and the choice of $\alpha_0$ at the beginning of the proof,
	 it is enough to prove that for all $k$ large enough, the tangent
	space to the component of the postcritical set
	passing through $(z_k, w_{k,i_0}(\lam_k))$ and giving the Miriurewicz relation above 
	lies in $C_{\alpha_0}$.
	The branch of the postcritical set is locally given by the equation 
	\begin{equation*}
		w=\beta(z A^{-m_k}, \lam_k) B(\lam_k)^{m_k},
	\end{equation*}
	and therefore its tangent space is generated by the vector 
	\begin{equation*}
		u_k:=\left(1, \frac{\partial}{\partial z}_{|z=z_k} \beta(z A^{-m_k}, \lam_k)   ) B(\lam_k)^{m_k}  \right).
	\end{equation*}
	Since
	\begin{align*}
		 \frac{\partial}{\partial z}_{|z=z_k} \beta(z A^{-m_k}, \lam_k)   ) B(\lam_k)^{m_k} 
		 &=\frac{\partial \beta}{\partial z}(z_k A^{-m_k}, \lam_k) \left(\frac{B(\lam_k)}{A}\right)^{m_k} \\
		 &\sim_{k \to +\infty} \beta_1 \cdot \left(\frac{B(\lam_k)}{A}\right)^{m_k},
	\end{align*}
	and  $\beta_1
	 \neq 0$, it follows that,  for all $k$ large enough, $u_k$ belongs to $C_{\alpha_0}$. The proof is complete. 
\end{proof}

\section{Proof of the main results}\label{s:proof_teo}

In this section, we will first apply inductively Proposition \ref{prop:secmis} in order to prove our main Theorem
\ref{teo_main_no_current}, and then deduce from this result and its proof the other results in the Introduction.
We start with a few required lemmas.

\begin{lem}\label{lem:landingonrep}
	Let $(f_\lam)_{\lam \in M}$ be a holomorphic family of polynomial skew products
over a fixed base $p$ and of a given degree $d\geq 2$, 
	and take $\lam_0 \in \Bif(M)$. 
	There exists a finite set $E \subset J_p$ such that for all $z_1 \in J_p \backslash E$,
	if $(z_1,w_1)$ 
	is any repelling periodic point of $f_{\lam_0}$ (which we locally follow as $(z_1, w_1(\lam))$)
	then, for every $n_0 \in \N$,
	 arbitrarily close to $\lam_0$ there exists  $\lam_1 \in M$ such that $f_{\lam_1}$ has a Misiurewicz relation
	of the form $f_{\lam_1}^n(z,c)=(z_1, w_1(\lam_1))$
 with $(p^n)'(z) \neq 0$ 
  and $n\geq n_0$.
\end{lem}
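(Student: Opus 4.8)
The plan is to reduce the statement, via the fibrewise decomposition of the bifurcation current, to a classical Ma\~{n}\'{e}--Sad--Sullivan type Montel argument for a one-dimensional family of polynomials in the return fibre over $z_1$.

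\emph{Choice of $E$.} I would take $E\subset J_p$ to be the set of $p$-periodic points lying on the forward orbit $\bigcup_{i\ge 0}p^i(C_p)$ of the critical set of $p$; this set is finite, since each of the at most $d-1$ critical points of $p$ has at most one eventual cycle. Only $p$-periodic $z_1\notin E$ need be considered, because the first coordinate of a periodic point of $f$ is $p$-periodic. For such $z_1$ one has $z_1\notin p^i(C_p)$ for every $i\ge 1$, whence: (i) every $z\in\bigcup_{N}p^{-N}(z_1)$ has a $p$-orbit avoiding $C_p$, so $(p^n)'(z)\neq 0$ for all $n$; and (ii) $z_1$ is not an exceptional point of $p$, so $\bigcup_{N}p^{-N}(z_1)$ is dense in $J_p$. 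Moreover a periodic point of $p$ in $J_p$ is never superattracting, so $(p^{m_1})'(z_1)\neq0$, where $m_1$ denotes the $p$-period of $z_1$.

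\emph{Finding an active critical point over a preimage of $z_1$.} Since $p$ is fixed, $L_p$ is constant, so $\Bif(M)=\Supp T_v$; by Proposition~\ref{prop_decomposition} with $k=1$, $\Supp T_v=\overline{\bigcup_{\zeta\in J_p}\Supp T_\zeta}$. As $\zeta\mapsto T_\zeta$ is continuous, Lemma~\ref{lemma_continous_family_currents_metric_space}(1) makes $\zeta\mapsto\Supp T_\zeta$ lower semicontinuous, and together with the density of $\bigcup_{N}p^{-N}(z_1)$ in $J_p$ this gives $\overline{\bigcup_{\zeta\in\bigcup_N p^{-N}(z_1)}\Supp T_\zeta}=\Supp T_v\ni\lam_0$. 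Hence, arbitrarily close to $\lam_0$, there are $\lam'$ and $z'\in p^{-N'}(z_1)$ with $\lam'\in\Supp T_{z'}$. Writing $T_{z'}=dd^c_\lam\sum_{w:\,q'_{\lam,z'}(w)=0}G_\lam(z',w)$ as a finite sum of positive currents (after moving $\lam'$ slightly so that the critical points of $q_{\lam,z'}$ are simple and locally holomorphic), I would select a critical point $c_0(\lam)$ of $q_{\lam,z'}$ with $\lam'\in\Supp dd^c_\lam G_\lam\big(z',c_0(\lam)\big)$.

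\emph{Montel argument in the return fibre, and conclusion.} Let $R_\lam:=Q^{m_1}_{\lam,z_1}$, a polynomial of degree $d^{m_1}\ge2$ with $R_\lam^{\circ k}=Q^{km_1}_{\lam,z_1}$ and $G_\lam(z_1,\cdot)=G_{R_\lam}(\cdot)$, and set $a(\lam):=Q^{N'}_{\lam,z'}(c_0(\lam))$. From $p^{N'}(z')=z_1$ one gets $G_\lam(z',c_0(\lam))=d^{-N'}G_{R_\lam}(a(\lam))$, so $\lam'\in\Supp dd^c_\lam G_{R_\lam}(a(\lam))$, hence $a$ is an active marked point for $R_\lam$ at $\lam'$, i.e.\ $\{R_\lam^{\circ k}(a(\lam))\}_k$ is non-normal on every neighbourhood of $\lam'$. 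Since $w_1(\lam)$ is a repelling periodic point of $R_\lam$ near $\lam'$, the usual argument applies: were there a neighbourhood $\Omega'$ of $\lam'$ with $R_\lam^{\circ k}(a(\lam))\neq w_1(\lam)$ for all $\lam\in\Omega'$ and all $k\ge n_0$, then for $k\ge n_0$ the graphs of $R_\lam^{\circ k}(a(\lam))$ would avoid the three disjoint graphs of $w_1(\lam)$ and of two of its preimages under a suitable iterate of $R_\lam$, forcing normality on $\Omega'$ by Montel --- a contradiction. So there are $\lam_1$ arbitrarily close to $\lam_0$ and $k\ge n_0$ with $R_{\lam_1}^{\circ k}(a(\lam_1))=w_1(\lam_1)$, that is $f_{\lam_1}^{\,N'+km_1}(z',c_0(\lam_1))=(z_1,w_1(\lam_1))$. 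Taking $n:=N'+km_1\ge n_0$, $z:=z'$, $c:=c_0(\lam_1)$, this is a Misiurewicz relation of the required form, and $(p^n)'(z')=(p^{N'})'(z')\,\big((p^{m_1})'(z_1)\big)^k\neq0$ by the first step. The main obstacle is the middle step: one needs activity of a critical point \emph{above a preimage of the prescribed $z_1$}, not above an arbitrary point of $J_p$, and it is precisely the finiteness of $E$ that makes this work --- it ensures both that all such preimages have nonvanishing $p$-derivative along their orbit and, through lower semicontinuity of $\zeta\mapsto\Supp T_\zeta$ together with density of backward orbits of $z_1$, that activity genuinely occurs above them; the third step is then a routine transposition of Ma\~{n}\'{e}--Sad--Sullivan once the problem lives in the fibre over $z_1$.
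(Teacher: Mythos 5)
Your proof is correct and follows the same two-step strategy as the paper: first produce, arbitrarily close to $\lam_0$, a parameter with an active vertical critical point in a fibre over a backward iterate of $z_1$, then conclude with a Montel argument in the return fibre over $z_1$; and your choice of the finite set $E$ (periodic points of $p$ on the postcritical orbit of $p$) is the same as the paper's, up to cosmetic differences. The one real difference is in the sourcing of the first step: the paper invokes Proposition~3.5 of \cite{astorg2018bifurcations}, while you reconstruct the required density directly from Proposition~\ref{prop_decomposition} and the lower semicontinuity statement in Lemma~\ref{lemma_continous_family_currents_metric_space}, combined with the density of $\bigcup_N p^{-N}(z_1)$ in $J_p$; this makes the argument more self-contained at the cost of some length, and your Montel step is spelled out in detail where the paper's is a one-line appeal. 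Two minor remarks: the constant in $G_\lam(z',c_0(\lam))=d^{-N'}G_{R_\lam}(a(\lam))$ depends on using the normalization $G_\lam(z,w)=\lim_n d^{-n}\log^+|Q^n_{\lam,z}(w)|$ (the text of the paper has $n^{-1}$, evidently a slip), but only the positivity of the proportionality factor matters; and the perturbation of $\lam'$ to make the critical points of $q_{\lam,z'}$ simple while staying in $\Supp T_{z'}$ implicitly uses that $\Supp T_{z'}$ is not contained in a proper analytic subset, which holds because $T_{z'}$ has continuous local potential --- worth stating, though consistent with the paper's level of detail.
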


\begin{proof}
First of all, let us define $E$ as the union of all repelling periodic points in the postcritical set. 
This set is finite. We fix any repelling periodic point  $z_1\notin E$.
By \cite[Proposition 3.5]{astorg2018bifurcations}, we may find 
$\lam'_0$
arbitrarily close to $\lam_0$
 for which a critical point of the form $(y,c)$ 
	 is active, where $y$ is in the strict backward orbit of $z_1$ by $p$. By Montel
	 theorem, we can further slightly perturb $\lam'_0$  to a $\lam_1$ with the property that
	 some iterate of $(y,c)$ 
	 by $f_{\lam_1}$
	 coincides with
	$(z,w_1(\lam_1))$.
	 This completes the proof. 	
\end{proof}

\begin{lem}\label{lem:multtrans}
	Take $\lam_0 \in \skpd$, let $z \in J_p$ be a periodic point of period $m>d$, and
	 $(z, w_i)$ ($1 \leq i \leq D_d= \dim \skpd$) 
	denote 
	a collection of
	repelling periodic points of $Q_{\lam_0,z}^m$ of different 
	periods, which we follow locally as $(z, w_i(\lam))$ over a domain $U \subset \skpd$ containing $\lam_0$. 
	Let $\rho_i(\lam)$ denote their vertical multipliers, 
	and
	let ${\rho}: U \to \C^{D_d}$ denote the map ${\rho}: \lam \mapsto (\rho_i(\lam))_{1 \leq i \leq D_d}$.
There exists an analytic hypersurface $R \subset U$ such that for all
	$\lam \in U \backslash R$,  the differential $d{\rho}_{\lam}$ is invertible.
\end{lem}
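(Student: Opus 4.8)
The plan is to show that the holomorphic function $\lambda \mapsto \det d\rho_\lambda$ does not vanish identically on $U$. Granting this, $R := \{\lambda \in U : \det d\rho_\lambda = 0\}$ is an analytic hypersurface and, by construction, $d\rho_\lambda$ is invertible for every $\lambda \in U \setminus R$. Now each coordinate $\rho_i$ is algebraic over $\C(\lambda)$: if $k_i$ denotes the period of $w_i$ under $Q^m_{\lambda,z}$, then $w_i(\lambda)$ is a branch of the algebraic set $\{Q^{m k_i}_{\lambda,z}(w) = w\}$ and $\rho_i = (Q^{m k_i}_{\lambda,z})' \circ w_i$. Since $\skpd$ is irreducible, the identity principle reduces the problem to showing that $d\rho$ has rank $D_d$ at some parameter, i.e. that the multiplier map $\rho$ is \emph{dominant} onto $\C^{D_d}$.

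Here is where the hypothesis $m > d$ enters. By Lemma \ref{lemma_param_skpd} one has $\deg_z A_j = d - j \le d < m$ for every $j$, so each $A_j$ is determined (indeed overdetermined) by its values at the $m$ distinct points $z, p(z), \ldots, p^{m-1}(z)$; hence $\lambda \mapsto (q_{\lambda, p^i(z)})_{0 \le i < m}$ is injective, and a Ritt-type rigidity argument for polynomial decompositions shows that $\lambda \mapsto Q^m_{\lambda,z}$ is generically finite onto its image $\Vv$, a $D_d$-dimensional algebraic family of monic polynomials of degree $d^m$. Moreover $\Vv$ is not contained in a single affine conjugacy class: when $D_d > 2$ this is forced by dimension, and for the remaining case $d = 2$ it follows by inspecting a conformal invariant of the return map (e.g. the escape rate of its critical value is non-constant on $\Vv$). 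Under this birational identification of $\skpd$ with $\Vv$, the $\rho_i$ become the multiplier functions on $\Vv$ of cycles of the pairwise distinct periods $k_1, \ldots, k_{D_d}$, so the claim reduces to the following: for a non-isotrivial $D_d$-dimensional algebraic family of polynomials, the multiplier map attached to $D_d$ cycles of distinct periods is dominant.

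Establishing this dominance is, I expect, the main obstacle. I would obtain it by adapting to the present (one-variable, fibered) situation the arguments developed for families of rational maps by Buff--Epstein \cite{buff2009bifurcation} and Gauthier \cite{gauthier2012strong}: concretely, one shows that the multiplier hypersurfaces $\{g \in \Vv : g \text{ has a period-}k_i\text{ cycle of multiplier } c_i\}$, $i = 1, \ldots, D_d$, meet properly for generic $(c_1, \ldots, c_{D_d})$, so that the fibres of $\rho$ are $0$-dimensional; since the relative variety of ``parameter together with a tuple of cycles of periods $k_1, \ldots, k_{D_d}$'' is pure of dimension $D_d$, each of its components then maps dominantly to $\C^{D_d}$, in particular the component through $(\lambda_0, (w_i))$. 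The two key inputs are: (i) the multiplier map on the moduli space of degree-$d^m$ polynomials is generically finite, since there are no flexible Latt\`es maps among polynomials, so the generic point of $\Vv$ is pinned down up to finitely many choices by its cycle multipliers; and (ii) the equidistribution of the period-$k$ cycles towards the measure of maximal entropy, which guarantees that the prescribed cycles of periods $k_1, \ldots, k_{D_d}$ are generic enough to detect the full variation of $\Vv$ and not merely that of a proper subfamily. Once $\rho$ is known to be dominant, we conclude as in the first paragraph that $\det d\rho_\lambda \not\equiv 0$ on $U$, so that $R := \{\det d\rho_\lambda = 0\}$ is the desired analytic hypersurface.
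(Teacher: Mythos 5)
Your opening reduction is correct and matches the paper: since $\skpd$ is irreducible (and $U$ is a domain), it suffices to show the holomorphic function $\lambda \mapsto \det d\rho_\lambda$ is not identically zero, and $R := \{\det d\rho_\lambda = 0\}$ is then the desired hypersurface; equivalently, one must show $\rho$ has maximal rank somewhere. The observation that $m>d$ forces $\lambda \mapsto (q_{\lambda,p^i(z)})_{0\le i<m}$ to be injective (because each $A_j$ has degree $d-j\le d <m$ in $z$) is exactly the paper's map $\phi_z$.

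After that, two genuine gaps appear. First, you assert that a ``Ritt-type rigidity argument'' shows $\lambda \mapsto Q^m_{\lambda,z}$ is generically finite onto its image, but you do not carry it out. The paper proves this by an explicit computation: the differential of the composition map $C:(q_m,\ldots,q_1)\mapsto q_m\circ\cdots\circ q_1$ at $(w^d,\ldots,w^d)$ is injective, verified by an inductive degree count that uses in an essential way that the perturbations $r_i$ have $\deg r_i\le d-2$ (no degree-$(d-1)$ term). A general appeal to rigidity of polynomial decompositions must contend with the many honest decompositions of $w^{d^m}$, so this step cannot be waved through.

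Second, and this is the real gap, your route to dominance does not close. Your input (i) (generic finiteness of the multiplier map, absence of flexible Latt\`es among polynomials) concerns the map that records \emph{all} cycle multipliers; it does not imply that a fixed collection of $D_d$ cycles of distinct periods already gives a dominant map on the $D_d$-dimensional subfamily $\Vv=\{Q^m_{\lambda,z}\}$. Your input (ii), equidistribution of period-$k$ cycles, is an asymptotic statement as $k\to\infty$; it says nothing about whether the specific finitely many cycles $(z,w_i)$ lie in a degenerate configuration for the multiplier map. What the paper actually uses is Gorbovickis's theorem \cite{gorbovickis2015}: in $\pol(D)$, given any $D-1$ cycles of pairwise distinct periods, the multiplier map has invertible differential off a specific global algebraic hypersurface $\mathcal{R}$, and moreover $w\mapsto w^D$ does not belong to $\mathcal{R}$. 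Since $w\mapsto w^{d^m}$ belongs to $\Vv$ (take the trivial skew product $(z,w)\mapsto(p(z),w^d)$), one gets $\Vv\not\subset\mathcal{R}$, which is the non-vanishing needed. This precise input --- and the final check that $w^{d^m}\in\Vv\setminus\mathcal{R}$ --- is the linchpin of the paper's proof; your proposal neither invokes it nor supplies an adequate substitute.
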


\begin{proof}
	First we claim that, 
since	  the period $m$ of $z$ satisfies $m > d$, 
 the family of the first returns $(Q_{\lam,z}^m)_{\lam \in \skpd}$
	can be mapped to
	an algebraic subfamily of pure dimension $D_d= \dim \skpd$ 
	in the space $\pol(d^m)$ of monic centred 
	degree  polynomials of degree $d^{m}$
	(the fact that the image is given by monic centred polynomials follows from the
	parametrization of $\skpd$ given in Lemma \ref{lemma_param_skpd}).
	 Indeed, consider first the map $\phi_z: \skpd \to \pd^m$ 
	defined by $\phi_z(\lam)=(q_{\lam,z_i})_{1\leq i \leq m}$, where $z_i := p^i (z)$.
Since $m>d$ and the coefficients of $q_{\lam,z_i}$ are given by polynomials in $z_i$ of degree at most $d$,
	the map $\phi_z$ is injective.
	Then, consider the map $C: \pd^m \to \pol(d^m)$ defined by $C(q_m, \ldots, q_1)=q_m \circ \ldots \circ q_1$.

	\begin{claim}
	The differential of $C$ at $(w^d, \ldots, w^d)$ is injective.
	\end{claim}
	\begin{proof}
	For $\eps >0$, consider the polynomials $q_i = w^d + \eps r_{i}$, with $r_i$ polynomials in $w$ of 
	degree $\leq d-2$.
	For every $j\leq m$, we also set
	$Q_j (\eps,w ):= q_{j} \circ \dots \circ q_{1}$. It is enough to check that, for every choice of $r_1, \dots, r_m$ (not all
	zero) we have $\frac{\partial Q_m (\eps,w)}{\partial \eps}\neq 0$
	(as a polynomial in $w$) 
	at $\eps=0$. 
	Since for all $1\leq j\leq m$ we have $Q_j (0,w) =w^{d^{j}}$, we can check by induction that
	\[
	\begin{aligned}
\frac{	\partial Q_1 (\eps, w)	}{\partial \eps} \Big|_{\eps=0} &= r_1(w);\\
\vdots\\
\frac{	\partial Q_j (\eps, w)	}{\partial \eps} \Big|_{\eps=0} & 
= r_j (w^{d^{j-1}}) + d(w^{d^{j-1}\cdot (d-1)}) \cdot  \frac{\partial Q_{j-1} (\eps, w)	}{\partial \eps} \Big|_{\eps=0};\\
\vdots\\
\frac{	\partial Q_m (\eps, w)	}{\partial \eps} \Big|_{\eps=0} & = r_m (w^{d^{m-1}}) + d(w^{d^{m-1} \cdot (d-1)}) \cdot  \frac{\partial Q_{m-1} (\eps, w)	}{\partial \eps} \Big|_{\eps=0}.
\end{aligned}
	\]
Since $\deg r_j\leq d-2$ for all $j$, it follows that, for all $0\leq j\leq m-1$, $ \frac{\partial Q_{j+1} (\eps, w)	}{\partial \eps} \Big|_{\eps=0}\neq 0$
as soon as $ \frac{\partial Q_{j} (\eps, w)	}{\partial \eps} \Big|_{\eps=0}\neq 0$.
Hence, in order to have $ \frac{\partial Q_{m} (\eps, w)	}{\partial \eps} \Big|_{\eps=0}=0$, we must have
$ \frac{\partial Q_{j} (\eps, w)	}{\partial \eps} \Big|_{\eps=0}= 0$ for all $0\leq j \leq m$.
Since this implies that all the $r_j$'s must be equal to 0, the proof is complete.
	\end{proof}
	
	 Therefore, since $Q_{\lam,z}^m = C \circ \phi_z(\lam)$, 
	the map $\lam \mapsto Q_{\lam,z}^m$ is locally injective near $\lam:=0$, and 
	so the family $(Q_{\lam,z}^m)_{\lam \in \skpd}$ has indeed dimension $D_d$.
	
	Once this property is established,  
	the statement follows from a slight adaptation of the main result in \cite{gorbovickis2015}, 
	which is as follows.
	For any $D \geq 2$, for any $Q_{\lam_0} \in \pol(D)$, let $w_i$ ($1 \leq i \leq D-1$)
	be a collection of repelling periodic points for $Q_{\lam_0}$, of distinct periods $m_i$. 
	Up to passing to a finite branched cover of $\pol(D)$, we may follow globally those periodic points as functions of the parameter
	$\lam \mapsto w_i(\lam)$. If we denote by $\rho_i(\lam)$ their respective multipliers $\rho_i(\lam):=(Q_\lam^m)'(w_i(\lam))$
	and set $\rho(\lam):=(\rho_i(\lam))_{1\leq i \leq D-1}$, 
	then 
	Gorbovickis proves that
	there exists a global hypersurface $\mathcal{R}$ such that for all $\lam \notin \mathcal{R}$, 
	the differential $d\rho_\lam$ is invertible.
	
	Therefore, it is enough for us to arbitrarily complete our collection of repelling periodic
	points with some $w_i$ (with $D_d < i \leq d^m$) and prove
	 that the subfamily $(Q_{\lam,z}^m)_{\lam \in \skpd} \subset \pol(d^m)$
	is not entirely contained in the corresponding algebraic hypersurface $\mathcal{H} \subset \pol(d^m)$.
	But this in turn follows from the facts that 
	 $w \mapsto w^{D}$ never belongs to 
	$\hcal$ (\cite[Lemma 2.1]{gorbovickis2015}), 
	and 
	that $w \mapsto w^{d^m}$ always belongs to
	$(Q_{\lam,z}^m)_{\lam \in \skpd} \subset \pol(d^m)$.
	The proof is complete.
\end{proof}

\begin{lem}\label{lem:simplecrit}
	Let $\lam_0 \in \skpd$ and assume that $f_{\lam_0}$ has a Misiurewicz relation $f_{\lam_0}^n(z_0,c_0)=(z_1,w_1)$
	satisfying \ref{item_good_znotcrit},
	and let $m$ be the period of $(z_1,w_1)$.
	Let $M_{(z_0,c_0),n} \subset \skpd$ denote the local hypersurface of $\skpd$ where this Misiurewicz relation is preserved. 
	Then,
	the set of parameters $\lam \in M_{(z_0,c_0),n}$ which satisfy  \ref{item_good_simple} and \ref{item_good_nontangent} is open and dense in $M_{(z_0,c_0),n}$.
\end{lem}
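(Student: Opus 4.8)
The plan is to prove openness and density separately for each of the two conditions \ref{item_good_simple} and \ref{item_good_nontangent}, noting that the intersection of two open dense subsets of the irreducible (or locally irreducible, after passing to a component) hypersurface $M_{(z_0,c_0),n}$ is again open and dense. Throughout I will write $\Mm := M_{(z_0,c_0),n}$ and work in a neighbourhood of $\lam_0$ in $\Mm$.

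\emph{Step 1: condition \ref{item_good_simple} is open and dense.} The locus where $c(\lam)$ fails to be a simple root of $q'_{\lam,z_0}$ is where the discriminant of $q'_{\lam,z_0}$ (or, more precisely, the resultant $\res_w(q'_{\lam,z_0}, q''_{\lam,z_0})$ restricted along the branch following $c_0$) vanishes; since this is an algebraic condition, the set satisfying \ref{item_good_simple} is open in $\Mm$, and it is non-empty since $\lam_0$ need not satisfy it but a generic nearby parameter will. For density, the point is that $\Mm$ is \emph{not} contained in the locus where the critical point $c_0$ is multiple: indeed, using the explicit parametrization of $\skpd$ from Lemma \ref{lemma_param_skpd}, one has enough freedom to perturb the coefficients $A_j(z)$ (keeping the Misiurewicz relation $f_\lam^n(z_0,c(\lam))=(z_1,w_1(\lam))$, which cuts out $\Mm$) so as to split a multiple critical point into simple ones; equivalently, the hypersurface $\{\res_w(q'_{\lam,z_0},q''_{\lam,z_0})=0\}$ does not contain $\Mm$ because already in the full family $\skpd$ this discriminant locus is a proper subvariety, and $\Mm$ — being defined by a Misiurewicz relation that constrains the orbit of $c_0$ but not the local order of vanishing of $q'$ at $c_0$ — is not contained in it. Hence the bad locus is a proper analytic subset of $\Mm$ and its complement is open and dense.

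\emph{Step 2: condition \ref{item_good_nontangent} is open and dense.} Granted \ref{item_good_simple} (which we may assume on a dense open set by Step 1), the Implicit Function Theorem gives, near any such $\lam$, a unique smooth component $L_\lam$ of $\crit(f_\lam)$ through $(z_0,c(\lam))$, depending holomorphically on $\lam$; hence $f_\lam^{n}(L_\lam)$ is a holomorphically varying analytic curve, and by \ref{item_good_znotcrit} (which holds at $\lam_0$, hence on a neighbourhood in $\Mm$, since $(p^{n})'(z_0)\neq 0$ and $z_0\notin\{p^i(z_1)\}$ are open conditions) its image is regular at $(z_1,w_1(\lam))$. The tangent direction to $f_\lam^n(L_\lam)$ at $(z_1,w_1(\lam))$ is then a holomorphic line field in $\lam$, and likewise the two eigendirections of $df_\lam^{m}(z_1,w_1(\lam))$ vary holomorphically (the vertical one is always $\langle(0,1)\rangle$; the horizontal one varies, using that $(z_1,w_1(\lam))$ is vertical-like so the eigenvalues stay distinct). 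Tangency to one of these two eigendirections is therefore a proper analytic condition on $\lam$, cut out by the vanishing of a holomorphic function on $\Mm$; so the set satisfying \ref{item_good_nontangent} is open. Density then amounts to showing this holomorphic function is not identically zero on $\Mm$, i.e. that $\Mm$ is not entirely contained in the tangency locus — which one establishes by exhibiting a single parameter in (a neighbourhood in $\Mm$ of) $\lam_0$ where the postcritical curve is transverse to both eigendirections, again by a perturbation in the $A_j$'s that moves $f_\lam^n(L_\lam)$ without destroying the Misiurewicz relation.

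\emph{Main obstacle.} The routine part is openness; the crux in both steps is density, which reduces to the non-inclusion statements "$\Mm$ is not contained in the discriminant locus" and "$\Mm$ is not contained in the tangency locus." The cleanest way I expect to argue these is to exhibit an explicit one-parameter deformation inside $\Mm$ (using the coordinates of Lemma \ref{lemma_param_skpd}) along which the relevant holomorphic function is visibly non-constant — this is where one must use that the family $\skpd$ is "large enough," the same philosophy underlying the whole paper. If a direct deformation is awkward to write down, an alternative is to invoke that these tangency/discriminant loci are the zero sets of sections of natural line bundles and to rule out $\Mm$ being a component of them by a dimension or degree count; but the hands-on perturbation argument is more transparent and is what I would attempt first.
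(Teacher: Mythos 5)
Your plan reproduces the skeleton of the paper's argument: both conditions cut out analytic (indeed algebraic) subsets of $M_{(z_0,c_0),n}$, so the set where they hold is open, and everything reduces to two non-inclusion statements. You also correctly identify that the non-inclusions are the crux. But you do not actually prove them — you say you "would attempt" a hands-on perturbation — and this is exactly the nontrivial content of the lemma. Perturbing the coefficients $A_j$ "keeping the Misiurewicz relation but splitting a multiple critical point" or "moving $f_\lam^n(L_\lam)$ without destroying the Misiurewicz relation" is not obviously doable: the Misiurewicz relation constrains the full orbit of $c(\lam)$, and moving inside that hypersurface is not a free perturbation of the $A_j$'s, so one needs to exhibit an actual witness.

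The paper's proof supplies concrete witnesses in both cases, and neither is a generic nudge. For \ref{item_good_simple}, it restricts to the subfamily $\Pi$ of trivial products $(p,q)$, $q\in\pol(d)$, where the condition is visibly generic on $\mcal\cap\Pi$; this pins down the codimension of the bad locus inside $\mcal$ at the cost of working in a specific slice. For \ref{item_good_nontangent}, the argument is genuinely asymptotic rather than perturbative: it restricts to the unicritical subfamily $U_d$, uses Lemma \ref{lemma_ez} to identify the accumulation of $M\cap U_d$ on $\P^d_\infty$, chooses $[\lam_\infty]$ there with $\lam_\infty'(p^i(z_0))\neq 0$ along the relevant orbit, and then compares the growth rates from Lemmas \ref{lemma_dir_inv} and \ref{lemma_dir_pc}: the horizontal eigendirection's second component is $\mathcal{O}(|\lam_j|^{1/d})$ while the tangent to the postcritical curve has second component $\asymp|\lam_j|^{(n(d-1)+1)/d}$, so they cannot coincide identically on $M\cap U_d$. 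This divergent-rate comparison is what makes the non-inclusion provable; it is not retrievable from a soft perturbation heuristic. Your proposal is therefore structurally right but has a genuine hole where the paper does its real work.
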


\begin{proof}
 	It will be useful to 
	consider the \emph{algebraic} hypersurface $\mcal \subset \skpd$ defined by:
	$$\mcal:=\{\lam \in \skpd: \res(q_{\lam,z_0}', Q_{\lam,z_0}^{n+m}-Q_{\lam,z_0}^{m} )   \}$$
	where $\res$ is the resultant and $m$ is the period of $(z_1, w_1)$. In other words, $\mcal$ is the set of $\lam \in \skpd$ such
	 that a critical point
	 of the form $(z_0,c)$ 
	 lands after $n$ iterations on a periodic point of period dividing $m$
	(and that periodic point may or may not be repelling).
	By definition, $M_{(z_0,c_0),n}$ is a neighbourhood of $\lam_0$ in $\mcal$.

	Let us first prove that the subset of $\mcal$ where  \ref{item_good_simple} does not hold has codimension at least 1 in $\mcal$.
	Let $\Pi:=\{ (p,q): q \in \pol(d)\}\subset \skpd$
	 denote the subfamily of trivial products. 
	Then \ref{item_good_simple} holds
	on a dense open subset of $\mcal \cap P$, and so \ref{item_good_simple} does not hold
	 on a subset of $\mcal$ of codimension 
	at least $1$ (in fact exactly 1, unless $d=2$ in which case \ref{item_good_simple} is always true).

	Now let $\lam_1 \in \mcal$ be a parameter where \ref{item_good_simple} holds. Then we may locally follow the critical point $(z_0,c_0)$ 
		as $(z_0, c_0(\lam))$, and moreover there exists a unique 
		irreducible component $L_\lam$ of $\crit(f_\lam)$ passing through $(z_0, c_0(\lam))$ for $\lam$ close enough to $\lam_0$, of local equation of the form $w=c(\lam,z)$. By \ref{item_good_znotcrit}, the algebraic set 
		$f_{\lam}^n(L_\lam)$ is also locally a graph near $(z_1,w_1(\lam))$, with local equation 
		given by $w=Q_{\lam,p^{-n}(z)}^n(c(\lam,z))$, where $p^{-n}$ denotes the local inverse branch of $p^n$ mapping $z_1$ to $z_0$. In particular, it is regular at $(z_1, w_1(\lam))$ and its tangent space is not vertical. 
		
		We now need to prove that the set of $\lam_1 \in \mcal$ where the tangent space of $f_\lam^n(L_\lam)$ is not an eigenspace of $(df_\lam^m)_{(z_1, w_1(\lam))}$ is open and dense in $\mcal$; again by the algebraicity of the condition,
		it is in fact enough to prove that this subset is non-empty.
Hence, we can restrict ourselves to the unicritical subfamily 
$U_d\subseteq \skpd$
 introduced in Section \ref{ss_unicritical} and prove the analogous statement there.
By Lemmas \ref{lemma_dir_inv} and \ref{lemma_dir_pc},  and with the notations
as in those lemmas, 
it is enough to prove that 
 the identity $v^{(2)}_\lam=u^{(2)}_\lam$ 
 cannot 
 hold on all of $M\cap U_d$.
 
By Lemma \ref{lemma_ez}, the accumulation on $\P^d_\infty$
of $M\cap U_d$ is equal to $E_{z_0} = \{[\lam]\colon a(z_0)=0\}$.
We choose $\lam_\infty$ such that  $[\lam_\infty] \in E_{z_0}$ and
$z_0$ is the only 
root in $J_p$ of the derivative $a'$ of the associated polynomial $a$.
Since
 there exists a sequence $(\lam_j)_{j \in \N}$ such that for all $j \in \N$, $\lam_j \in M\cap U_d$ and 
$[\lam_j] \to [\lam_\infty]$,
the assertion  follows from
 the estimates \eqref{eq_dir_inv} and \eqref{eq_dir_pc}. The proof is complete.
\end{proof}

We can now prove Theorem
\ref{teo_main_no_current} and the other results stated in the Introduction.

\begin{proof}[Proof of Theorem \ref{teo_main_no_current}]

		Fix $\lambda_0 \in \Bif(\skpd)$ and $\eps>0$. Set $M^0:=\skpd$,
		and let $H_{\lam_0}$ denote the limit set of a vertical-like
		IFS for $f_{\lam_0}$
		(which exists by Proposition \ref{prop:modrep}).
		Let
		$z$ be a repelling periodic point
		 of period $m>d$
		 for $p$ and
		 $(z, w_i(\lam_0))$ be a collection of repelling periodic points in $H_{\lam_0}$ as in 
		Lemma \ref{lem:multtrans}.

		  We will prove by induction on $1 \leq k \leq \dim \skpd$ 
		that there exist
	a parameter $\lam_k$	which is $k\eps$-close to $\lam_0$ and
		 a family $M^k$ with $\lam_k\in M_k$
		satisfying the following properties:
		\begin{enumerate}[label={\bf (I\arabic*)}]
			\item\label{item_thm_induction_intersection}  $M^k=\bigcap_{1 \leq i \leq k} M_{(y_i,c_i),(z, w_i(\lam_k)), n_i}$
			is the intersection of $k$ distinct Misiurewicz loci (where a critical point lands after some
			iterations on one of the periodic points $(z, w_i)$ introduced above);
			\item\label{item_thm_induction_dimension} $M^k$ has codimension $k$ in $\skpd$;
			\item\label{item_thm_induction_good} if $k<\dim \skpd$, among the $k$ persistent
			Misiurewicz relations defining $M^k$, at least one is good in the sense 
			of Definition \ref{def:goodmis} in a neighbourhood of $\lam_k$.
		\end{enumerate} 

	Recall that each $M_{(y_i,c_i),(z, w_i(\lam_k)), n_i}$ is a local family; in particular, condition \ref{item_thm_induction_good} is also local.

		\subsection*{Initialization: the case $k=1$}
		
		Using the notation of Lemma \ref{lem:multtrans}, up to replacing $\lam_0$
		by a first perturbation $\lam_0'$, we may assume without
		loss of generality that $\lam_0 \in \Bif(\skpd) \backslash R$. Indeed, the
		bifurcation locus cannot be locally contained in 
		any proper analytic subset of $\skpd$, since
		the bifurcation current has continuous potential. 
		In the rest of the proof, we will always assume that all perturbations are small enough
		so that none of the parameters we consider belong to $R$.

		We then apply Lemma \ref{lem:landingonrep} to find $\lam' \in \mathbb{B}(\lam_0,\frac{\eps}{3})$ such that 
		$f_{\lam'}$ has a Misiurewicz relation of the
		form
		$f^{n_1}_{\lam} (y_1, c_1) = (z,w_1 (\lam))$, hence 
				$\lam' \in  M_{(y_1,c_1),(z, w_1(\lam')), n_1}$.
Here $n_1$ can be taken arbitrarily large.
        We can assume that  $y_1$ satisfies $(p^n)' (y_1)\neq 0$,
          that it does not belong to the cycle of $z$ and that the period $m_1$ 
          of $z$ satisfies $m_1>d$.
		We need to prove that up to perturbing $\lam'$ inside  $M_{(y_1,c_1),(z, w_1(\lam')), n_1}$,
		we can obtain  $\lam_1 \in M_{(y_1,c_1),(z, w_1(\lam')), n_1} \cap \B(\lam_0,\eps) $ which is a good parameter in
		the sense of Definition \ref{def:goodmis}.
		
		Let us first prove that the vertical multiplier $\rho_1(\lam)$ 
		of $(z,w_1(\lam))$
		is not constant on $M_{(y_1,c_1),(z, w_1(\lam')), n_1}$. The argument is similar to the one in the proof of Lemma \ref{lem:simplecrit}: we consider 
		the intersection $\tilde M := M_{(y_1,c_1),(z, w_1(\lam')), n_1} \cap U_d$
		with the unicritical subfamily $U_d$
		 and pick $[\lam_\infty]$
		is the accumulation on $\P^d_\infty$  of
$\tilde M$
		 such that $\lam_\infty(p^i(z)) \neq 0$ for all $0 \leq i \leq m_1$.
		 Then, by Lemma \ref{lemma_ez}, there exists a
		 sequence $(\lam_j)_{j \in \N}$ such that 
  $\lam_j \in \tilde M$ for all
		 $j \in \N$, 
		 $|\lam_j| \to +\infty$ and $[\lam_j] \to [\lam_\infty]$.	
		By Lemma \ref{lemma_claim_12}, for all $0 \leq i \leq m_1-1$, we have $|w_{i,j}| \asymp |\lam_j|^{1/d}$ where 
		$w_{i,j}:=Q_{\lam_j,z}^{n_1+i}(0)$. In particular, 
	$|\rho_1(\lam_j)|:=\left|(Q_{\lam_j,z}^{m_1})'(w_{0,j}) \right| \asymp |\lam_j|^{m_1 \cdot (d-1)/d}$
		and therefore is not constant. This proves \ref{item_good_non_constant}.

	Observe that \ref{item_good_znotcrit} follows from the choice of $y_1$
		 as in Lemma \ref{lem:landingonrep}.
		Property \ref{item_good_vertical_like} is a consequence of the fact that $(z,w_1(\lam))$ belongs to the limit
		 set of the vertical-like IFS, and it follows from Lemma \ref{lem:simplecrit} that  properties
		  \ref{item_good_simple} and \ref{item_good_nontangent} are generically satisfied in 
		 $M_{(y_1,c_1),(z, w_1(\lam')), n_1}$ 
		 as soon as \ref{item_good_znotcrit} holds. 
		 This takes care of \ref{item_thm_induction_intersection} and \ref{item_thm_induction_good};
		 and \ref{item_thm_induction_dimension} is obvious in the case $k=1$.

		\subsection*{Heredity}
		Assume now that $k<D_d$ is such that there exists $\lam_k$ satisfying \ref{item_thm_induction_intersection}, \ref{item_thm_induction_dimension}
		and \ref{item_thm_induction_good}.
		By the induction hypothesis, there exists $k_0 \in \{1, \ldots, k\}$ such that the vertical multiplier $\rho_{k_0}$ of the
			repelling cycle from the $k_0$-th Misiurewicz relation $M_{(y_{k_0},c_{k_0}),(z, w_{k_0}(\lam))}$
		is non-constant on $M^k$. Consider the germ of analytic subset of $M^k$ defined by
		$N:=\{\lam \in M^k : \rho_{k_0}(\lam)=\rho_{k_0}(\lam_k) \}$. Then $N$ has codimension $k+1$ in $\skpd$.
		We claim that if $k<D_d-1$, there exists at least one  repelling point $(z,w_{j_0})$ 
		(among all those introduced at the beginning of the proof) 
		with $j_0 \neq k_0$ such that its vertical multiplier $\rho_{j_0}$ is non-constant on $N$.
		 Indeed, by Lemma \ref{lem:multtrans}, $\dim \bigcap_{i=1}^{D_d} \{\lam \in \skpd : \rho_i(\lam)=\rho_i(\lam_k) \} = 0$, while if 
		$k < D_d-1$ then $\dim N>0$. If $j_0>k$, then we relabel the repelling periodic points $(z,w_i)_{k+1\leq i \leq D_d}$ 
		so that $j_0=k+1$.
		
		We now take $\lam_{k+1,\infty} \in \B(\lam_k,\frac{\eps}{2})$ to be a point 
		 in the dense set $S$ given by
		 Proposition \ref{prop:secmis}, and then take $\lam_{k+1} \in \B(\lam_{k+1,\infty}, \frac{\eps}{2})$
		 such that $\lam_{k+1}$ has a Misiurewicz relation as in Proposition \ref{prop:secmis}, with 
		$(z'',w''):=(z, w_{k+1})$.
We consider the associated Misiurewicz locus  $M^k \cap  M_{(y_{k+1},c_{k+1}),(z, w_{k+1}(\lam_{k+1})), n_{k+1}}$.
	By Proposition \ref{prop:secmis}, $\lam_{k+1}$ already satisfies \ref{item_thm_induction_intersection}
	 and \ref{item_thm_induction_dimension} with
		$$M^{k+1}:= M^k \cap  M_{(y_{k+1},c_{k+1}),(z, w_{k+1}(\lam_{k+1})), n_{k+1}}.$$
		If $k=D_d-1$, we are done; otherwise, it remains to be proved that at least 
		one of the Misiurewicz relations defining $M^{k+1}$ is good in $M^k$.
		
	Note that items \ref{item_good_vertical_like}, 
		 \ref{item_good_znotcrit},
			\ref{item_good_simple}, and \ref{item_good_nontangent} 
			are all preserved by restriction, so that they still hold 
			 on $M^{k+1}$ for each of the first 
		$k$ Misiurewicz relations $ M_{(y_i,c_i),(z, w_i(\lam_k)), n_i}$ (with $1 \leq i \leq k$).
		Moreover, the new Misiurewicz relation  $ M_{(y_{k+1},c_{k+1}),(z, w_{k+1}(\lam_{k+1})), n_{k+1}}$
		satisfies \ref{item_good_vertical_like} since $(z,w_{k+1})$
	  is vertical-like by definition, and satisfies
 \ref{item_good_znotcrit},
  \ref{item_good_simple}, and \ref{item_good_nontangent} by Proposition \ref{prop:secmis}.
		
		It now only remains to prove that at least one among 
		the $(z, w_i(\lam))$ (for $1 \leq i \leq k+1$) has a non-constant vertical multiplier on $M^{k+1}$, which would give
		\ref{item_good_non_constant}. 
		Recall that there exists $k_0, j_0 \leq k+1$ with $k_0 \neq j_0$, such that 
		$\rho_{j_0}$ is non-constant on $\{\lam \in \skpd: \rho_{k_0}(\lam) = \rho_{k_0}(\lam_k)  \}$.
		In other words, the level sets $\{\lam \in M^k : \rho_{k_0}(\lam) = \rho_{k_0}(\lam_k)\}$ and 
		 $\{\lam \in M^k : \rho_{j_0}(\lam) = \rho_{j_0}(\lam_k)\}$ are two distincts analytic hypersurfaces of $M^k$. 
		Up to taking $\lam_{k+1}$ close enough to $\lam_k$, we may still assume that 
		the same holds at $\lam_{k+1}$. Therefore $M^{k+1}$
		(which has codimension 1 in $M^k$) cannot be  contained in  
		 \[\{\lam \in M^k : \rho_{k_0}(\lam) = \rho_{k_0}(\lam_{k+1})\} \cap \{\lam \in M^k : \rho_{j_0}(\lam) = \rho_{j_0}(\lam_{k+1})\},\]
		 which precisely means that
	either $\rho_{j_0}$ or $\rho_{k_0}$ is non-constant on $M^{k+1}$.	
		
		Therefore, at least one of the $k+1$ Misiurewicz relations defining $M^{k+1}$ is good in the sense of definition 
		\ref{def:goodmis}. This proves \ref{item_thm_induction_good} and completes the inductive step. The proof is complete.
		\end{proof}

		\begin{proof}[Proof of Theorem \ref{teo_main}]
Let $\lam_{D_d}$ be as constructed in the proof of Theorem \ref{teo_main_no_current}.		
		By Proposition \ref{prop_sufficient_condition_bifk}, 
		we have $\lam_{D_d} \in \Supp \tbif^{D_d}$. This gives $\Supp \tbif = \tbif^{D_d}$, and proves the assertion.
\end{proof}

\begin{proof}[Proof of Corollary \ref{big_families_mubif_all}]
By the initialization step in the proof of Theorem \ref{teo_main_no_current}, for every $d$
there exists a  Misiurewicz hypersurface of $\skpd$
 which is good in the sense of Definition \ref{def:goodmis}. The result follows from Corollary \ref{cor:bifm=m}.
\end{proof}

\begin{proof}[Proof of Corollary \ref{cor_open}]
By \cite{dujardin2016non,taflin_blender}, for every $d\geq 2$ the bifurcation locus of the family
$\skpd$ is not empty.
The assertion follows from Theorem \ref{teo_main}.
\end{proof}

\begin{proof}[Proof of Corollary \ref{cor_intro_hausdorff}]
By Theorem \ref{teo_main} it is enough to check that the same property is true for the bifurcation locus. 
By  \cite[Theorem 3.3]{astorg2018bifurcations}, the bifurcation locus associated to the return maps of any
periodic fibre is  
contained in the bifurcation locus of the family $\skpd$.
 By  \cite{mcmullen2000mandelbrot} the bifurcation loci of the return maps
  have full Hausdorff dimensions. The assertion follows.
\end{proof}

\appendix

\section{Proof of Lemma \ref{lemma-BD-fibered}}\label{as:proof-lemma-fibered}\label{a:proof_lemma_tf}

We work here in the assumptions of Proposition \ref{prop:modrep}.
We assume that we are given a
hyperbolic set 
$\tilde H$ in $J_p$ as in the proof of Proposition \ref{prop:modrep}, i.e., with positive entropy and
$\delta:=\dim_H \tilde H >1$. We will be only interested in the following in the dynamics
of $f$ on $\tilde H$ (and $\tilde H \times \C$).
We denote by $\tilde m$ the \emph{conformal measure} associated
with the weight $\tilde \phi (z) :=-\log |p'(z)|^\delta$. Recall
that this means that $\tilde m$ is an eigenvector for the dual $\tilde \Ll^*$ of the Perron-Frobenius
operator $\tilde \Ll$
acting on continuous functions  $g\colon \tilde H \to \R$ as 
\[\tilde \Ll_{\tilde \phi} (g)(x) =\sum_{f (a)=x}  e^{\tilde \phi (a)} g (a). \]
Observe that 
$\tilde \phi$
is H\"older continuous on $\tilde H$. 
This and the hyperbolicity of $\tilde H$ imply the existence and uniqueness of $\tilde m$, see for instance
\cite{przytycki2010conformal}.
Moreover, $\tilde m$
is equivalent to the 
$\delta$-dimensional
Hausdorff measure. 
It is also equivalent to the unique \emph{equilibrium state} $\tilde \nu$
for the system $(\tilde H,f)$
associated with $\tilde \phi$.  This means
that $\tilde \nu$
 is the (unique)
maximizer of the \emph{pressure}
$P(\tilde \phi) := \sup_{\omega} h_{\omega}+ \langle\omega, \tilde \phi\rangle$,
where the supremum is taken over all invariant probability measures for $f$ and
$h_\omega$ is the metric entropy of the measure $\omega$.
We denote by $\rho$
the Radon-Nikodym derivative of $\tilde \nu$ with respect to $\tilde m$, i.e., set
$\tilde \nu = \rho\tilde m$, and 
by $L_{\tilde \nu}= \langle \tilde \nu, \phi\rangle$ 
the Lyapunov exponent for $\tilde\nu$.
By the construction of $\tilde H$, 
we have $L_{\tilde \nu} < \log d  \leq  L_v$, where $L_v$ is the vertical exponent for $f$.
We also have
\begin{equation}\label{eq-equid-E}
\lim_{n\to \infty}\lam^{-n} \sum_{p^n (a)=x}\frac{1}{ |(p^n)' (a)|^{\delta}} g(a) \to \rho(x)\langle m, g\rangle 
\end{equation}
for all  $x \in \tilde H$  and continuous functions $g\colon \tilde H\to \R$, where $\lam$
is the eigenvalue corresponding to $\tilde m$, i.e., $\tilde \Ll^* \tilde m = \lam \tilde m$.

Recall that $J_{\tilde H} 
= \bar {\cup_{z \in \tilde H} \{z\}\times J_z}$. 
We see $(J_{\tilde H},f)$
 as a dynamical system and
we can consider the weight on $J_{\tilde H}$ given by
$\phi(z,w)= \tilde \phi (z) =-\log |p' (z)|^\delta$.
 Observe that, a priori, $\phi$
 is not a H\"older continuous weight on $J$, and the system 
 $(J_{\tilde H},f)$ is not necessarily hyperbolic.
 Hence, we cannot directly apply the thermodynamical
formalism 
for the system $(J, f)$  and weight $\phi$
(see for instance \cite{urbanski2013equilibrium, bd-eq-states}), 
nor to the system $(J_{\tilde H},f)$.
However,
given the fibred structure
it is immediate to deduce the following result.

\begin{lemma}\label{lemma_nu_fibered}
The measures
\[
 m := \int_{\tilde H} \mu_z d\tilde m(z) 
\quad \mbox{ and }\quad
 \nu := \int_{\tilde H} \mu_z d\tilde \nu(z) = \int_{\tilde H} \mu_z \rho(z) d\tilde m(z)
\]
are
the unique conformal measure and equilibrium state associated with the weight 
$\phi(z,w)= -\log |p' (z)|^\delta$ on the system $(J_{\tilde H}, f)$,  respectively.
The measure $\nu$
is invariant, mixing, and satisfies
\begin{equation}\label{eq-equid-JE}
\lim_{n\to \infty} (d\lam)^{-n} \sum_{f^n (a)=x}\frac{1}{ |(p^n)'(\pi_z(a))|^{\delta}} g(a) \to \rho(x)\langle m, g\rangle 
\end{equation}
for all  $x \in J_{\tilde H}$  and continuous functions $g\colon J_{\tilde H}\to \R$. Moreover, the metric entropy
of $\nu$ is strictly larger than $\log d$
and the Lyapunov exponents of $\nu$ are equal to $L_v$ and $L_{\tilde \nu}$. In particular, they are strictly positive.
\end{lemma}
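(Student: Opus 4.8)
The plan is to push every assertion down to the base system $(\tilde H,p)$ with the H\"older weight $\tilde\phi=-\log|p'|^\delta$, where the thermodynamic formalism is available because $\tilde H$ is hyperbolic, and then transport the conclusions up to $(J_{\tilde H},f)$ through the semiconjugacy $\pi_z\colon J_{\tilde H}\to\tilde H$ and the balanced fibre measures $z\mapsto\mu_z$; the point that makes this work is that $\phi(z,w)=\tilde\phi(z)$ is constant along fibres. The identities I would use throughout are $(q_z)_*\mu_z=\mu_{p(z)}$, the $d$-to-one relation $\int_{J_{p(z)}}\big(\sum_{q_z(b)=w}h(b)\big)\,d\mu_{p(z)}(w)=d\int_{J_z}h\,d\mu_z$, and, for the fibred transfer operator $\Ll_\phi g(z,w)=\sum_{p(a)=z}|p'(a)|^{-\delta}\sum_{q_a(b)=w}g(a,b)$, the relation $\Ll_\phi(\tilde g\circ\pi_z)=d\,(\tilde\Ll_{\tilde\phi}\tilde g)\circ\pi_z$. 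Integrating the inner sum of $\Ll_\phi g$ against $\mu_z$ over the fibre produces a factor $d$ and the fibre average $G(a):=\int g(a,\cdot)\,d\mu_a$, whence $\langle m,\Ll_\phi g\rangle=d\langle\tilde m,\tilde\Ll_{\tilde\phi}G\rangle=d\lam\langle\tilde m,G\rangle=d\lam\langle m,g\rangle$; thus $m$ is conformal with eigenvalue $d\lam$. Conversely, a $\phi$-conformal $m'$ pushes forward by $\pi_z$ to a $\tilde\phi$-conformal measure, hence equals $\tilde m$ by uniqueness on $\tilde H$, and the conditional measures of $m'$ on the fibres are balanced (the fibre weight being constant), hence equal $\mu_z$; so $m'=m$.

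Invariance of $\nu$ follows from $f_*(\delta_z\otimes\mu_z)=\delta_{p(z)}\otimes\mu_{p(z)}$ and $p_*\tilde\nu=\tilde\nu$. For \eqref{eq-equid-JE}, observe that for $f^n(a)=x$ one has $|(p^n)'(\pi_z a)|^{-\delta}=e^{\phi^{(n)}(a)}$, so the left-hand side is exactly $(d\lam)^{-n}\Ll_\phi^n g(x)$; by Stone--Weierstrass it is enough to treat $g(z,w)=h(z)k(w)$. One then replaces the inner fibrewise sum $\sum_{Q^n_a(b)=w}k(b)$ by $d^n\big(\int k\,d\mu_a+o(1)\big)$, using the equidistribution of fibrewise preimages $\tfrac1{d^n}\sum_{Q^n_a(b)=w}\delta_b\to\mu_a$ uniformly in $a\in\tilde H$ and $w\in J_{p^n(a)}$, and applies the base equidistribution \eqref{eq-equid-E} to $a\mapsto h(a)\int k\,d\mu_a$; this gives $(d\lam)^{-n}\Ll_\phi^n g\to(\rho\circ\pi_z)\,\langle m,g\rangle$ uniformly on $J_{\tilde H}$, which is \eqref{eq-equid-JE}. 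Mixing of $\nu=(\rho\circ\pi_z)\,m$ is the standard consequence: $\langle\nu,(\psi\circ f^n)h\rangle=(d\lam)^{-n}\langle m,\psi\,\Ll_\phi^n(h\cdot(\rho\circ\pi_z))\rangle\to\langle m,\psi(\rho\circ\pi_z)\rangle\,\langle m,h(\rho\circ\pi_z)\rangle=\langle\nu,\psi\rangle\langle\nu,h\rangle$.

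For the equilibrium state, let $\omega$ be $f$-invariant with base projection $\bar\omega=(\pi_z)_*\omega$: the fibred Abramov--Rokhlin formula gives $h_\omega\le h_{\bar\omega}+\log d$ (the fibre dynamics has entropy $\log d$) while $\langle\omega,\phi\rangle=\langle\bar\omega,\tilde\phi\rangle$, so $h_\omega+\langle\omega,\phi\rangle\le P(\tilde\phi)+\log d$, with equality exactly when $\bar\omega=\tilde\nu$ and $\omega_z=\mu_z$ a.e., that is when $\omega=\nu$; this makes $\nu$ the unique equilibrium state and yields $P(\phi)=P(\tilde\phi)+\log d$. Moreover $h_\nu=h_{\tilde\nu}+\log d$ and $h_{\tilde\nu}=\delta L_{\tilde\nu}>0$ by Manning's formula together with $\delta>1$ and $L_{\tilde\nu}>0$, so $h_\nu>\log d$. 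Finally $df_{(z,w)}$ is lower triangular with diagonal entries $p'(z)$ and $q_z'(w)$, so by Birkhoff the Lyapunov exponents of $\nu$ are $\int\log|p'|\,d\tilde\nu=L_{\tilde\nu}$ and $\int_{\tilde H}\big(\int_{J_z}\log|q_z'|\,d\mu_z\big)\,d\tilde\nu(z)$; since each tower map $Q^n_z$ is monic, every $K_z$ has logarithmic capacity one, which yields the fibrewise Manning-type identity $\int_{J_z}\log|q_z'|\,d\mu_z=\log d+\sum_{q_z'(w)=0}G(z,w)$ and hence the value $L_v$ — in any case $\ge\log d>L_{\tilde\nu}$, which is all the later application needs.

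The step I expect to be the main obstacle is the uniformity of the fibrewise preimage equidistribution $\tfrac1{d^n}\sum_{Q^n_a(b)=w}\delta_b\to\mu_a$ over $a\in\tilde H$ and $w\in J_{p^n(a)}$, which feeds both \eqref{eq-equid-JE} and the mixing argument: this is exactly where one must invoke the fibred thermodynamic formalism over the hyperbolic base $\tilde H$ (uniform expansion and bounded distortion along fibres) rather than the one-variable theory in a single fibre. Every other step reduces cleanly to the base.
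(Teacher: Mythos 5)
Your proposal is correct and arrives at the same conclusions as the paper, but by a genuinely different route. The paper's own proof is essentially a one-paragraph reduction: it notes that invariance is immediate, asserts that \eqref{eq-equid-E} implies \eqref{eq-equid-JE} and that \eqref{eq-equid-JE} in turn yields conformality of $m$, mixing of $\nu$, and uniqueness of the equilibrium state (citing the standard references), computes the entropy as $\log d + h_{\tilde\nu}$ via the Brin--Katok formula applied to the fibred structure of $\nu$, and then simply remarks that $L_{\tilde\nu}$ and $L_v$ must be the Lyapunov exponents. In contrast, you verify the conformal relation $\Ll_\phi^* m = d\lam\, m$ and the uniqueness of $m$ directly at the level of transfer operators, using the identity $\Ll_\phi(\tilde g\circ\pi_z)=d\,(\tilde\Ll_{\tilde\phi}\tilde g)\circ\pi_z$ and the $d$-balancedness of $\mu_z$; you derive mixing from the decay of $(d\lam)^{-n}\Ll_\phi^n$; and you replace Brin--Katok by the Abramov--Rokhlin formula to get $h_\omega\le h_{\bar\omega}+\log d$ with equality characterizing $\omega=\nu$, which simultaneously proves uniqueness of the equilibrium state and the entropy identity. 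Both approaches are legitimate; yours has the merit of making explicit the single genuinely delicate point, namely the uniform fibrewise equidistribution $d^{-n}\sum_{Q^n_a(b)=w}\delta_b\to\mu_a$ over the $n$-dependent set of base preimages $a$ — this is exactly the content that the paper's phrase ``\eqref{eq-equid-E} implies \eqref{eq-equid-JE}'' compresses, and it does require uniformity of bounded distortion over $\tilde H$ as you note.

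One point where you are appropriately more careful than the paper: the claim that the vertical Lyapunov exponent of $\nu$ equals $L_v$ literally. By Birkhoff the vertical exponent of $\nu$ is $\int_{\tilde H}\big(\int_{J_z}\log|q_z'|\,d\mu_z\big)\,d\tilde\nu(z)$, whereas $L_v$ is the same inner integral averaged against $\mu_p$ rather than $\tilde\nu$; these agree only if the fibrewise integrand is constant on $J_p$. Your hedge ``in any case $\ge\log d > L_{\tilde\nu}$, which is all the later application needs'' is precisely the correct observation, and it is indeed all that is used downstream (in Lemma~\ref{lemma-BD-fibered} and Proposition~\ref{prop:modrep} one only needs the vertical exponent to exceed $L_{\tilde\nu}$, which follows from $\ge\log d$). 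The paper's proof does not flag this.
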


\begin{proof}
It follows from the definition that $\nu$ is $f$-invariant. Moreover, \eqref{eq-equid-E} implies \eqref{eq-equid-JE}
and \eqref{eq-equid-JE} implies that $m$ is a (unique) conformal measure and that $\nu$ is mixing
and is a unique equilibrium state, see for instance \cite{przytycki2010conformal,urbanski2013equilibrium,bd-eq-states}.
The metric entropy of $\nu$ is equal to $\log d + h_{\tilde \nu}$ 
by Brin-Katok formula  \cite{brin1983local}
for the mass of infinitesimal balls and the fibred structure  of $\nu$.
Both $L_{\tilde \nu}$ and $L_v$ must be Lyapunov exponents for the systems, which completes the
proof.
\end{proof}

In order to prove Lemma \ref{lemma-BD-fibered},
we will give an adapted fibered version of the proof by Briend-Duval \cite{briend1999exposants}
 of the equidistribution of
repelling
periodic points with respect to the equilibrium measure
for endomorphisms of $\P^k$ 
(which is done by constructing enough contracting inverse branches of a ball
centred on the Julia set to itself).
Since the method is now standard, we just sketch the overall proof and
highlight
the differences here
(due to the non-constant Jacobian of $\nu$ and $\tilde \nu$).
More details can be found in 
\cite[Section 4.7]{bd-eq-states}, where the strategy is adapted to prove
the equidistribution of repelling periodic points with respect to the equilibrium state 
(when the weight satisfies some regularity
condition on all of $J$).

First we need to introduce the natural extension of the system $(J_{\tilde H},f)$, see \cite{cornfeld2012ergodic}. We set
$X:= J_{\tilde H} \setminus \cup_{m\geq 0} f^{-m} (PC_f)$, where $PC_f := \cup_{n\geq 0} f^{n}C_f$
is the postcritical set of $f$. Since, by Lemma \ref{lemma_nu_fibered}, the entropy of $\nu$ is strictly
larger than $\log d$, we have $\nu (X)=1$, see \cite{de2008exposants,dupont2012large}. We then consider the
dynamical system $(\hat X, \hat f, \hat \nu)$, where
$\hat X =\{ \hat x=(\dots x_{-1}, x_0, x_1, \dots ) \colon f(x_i)=x_{i+1}\}$
and $\hat f (\hat x)= (x_{i+1})_{i\in \Z}$ where $\hat x =(x_i)_{i\in \Z}$. The measure
$\nu$ lifts to a measure $\hat \nu$ satisfying $(\pi_0)_* \hat \nu = \nu$, where $\pi_0 \colon \hat X\to X$
is given by $(x_i)\mapsto x_0$. The measure $\hat \nu$ is mixing since $\nu$ is mixing.
For any $\hat x\in \hat X$ and every $n$ we denote by $f^{-n}_{\hat x}$
the inverse branch of $f^n$ in a neighbourhood of $x_0$ with values in a neighbourhood of $x_{-n}$.
We have the following lemma.

\begin{lemma}\label{lemma-inv-branches}
For all $\eps < L_{\tilde \nu}$
there exist
 measurable functions $r_\eps, L_\eps, T_\eps\colon \hat X \to \R^+$
 such that, for $\hat \nu$-almost all
  $\hat x \in \hat X$ and  all $n\geq 1$,
  \begin{enumerate}
  \item\label{item_ib_def} the map $f^{-n}_{\hat x}$ is defined on $B(x_0, r_\eps (\hat x))$;
\item\label{item_ib_contraz} $\Lip (f^{-n}_{\hat x} )\leq L_\eps e^{-nL_{\tilde \nu} + n\eps}$ on $B(x_0, r_\eps (\hat x))$;
\item\label{item_ib_sum} $\forall y \in f^{-n}_{\hat x} (B(x_0, r_\eps (\hat x)) )$ we have $\abs{\frac{1}{n} \log |\jac\, df_y^n| - (L_{\tilde \nu} +L_v) }\leq \frac{1}{n} \log T_\eps (\hat x) + \eps$;
\item\label{item_ib_max} $\forall y \in f^{-n}_{\hat x} (B(x_0, r_\eps (\hat x)) )$ we have $\abs{\frac{1}{n} \log \|df_y^n\| - L_v}\leq \frac{1}{n} \log T_\eps (\hat x) + \eps$.
  \end{enumerate}
\end{lemma}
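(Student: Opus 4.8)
The plan is to run the now-standard construction of contracting inverse branches along $\hat\nu$-generic backward orbits, due to Briend--Duval \cite{briend1999exposants} and de Th\'elin \cite{de2008exposants} and adapted to the equilibrium-state setting in \cite[Section 4.7]{bd-eq-states}, while taking care of the two features specific to our situation: the tangent space splits canonically into a vertical and a horizontal direction because $f$ is a skew product, and neither the dynamical Jacobian $\jac\, df$ nor the Radon--Nikodym derivative $\rho$ is constant, so the controlling quantities will only be \emph{tempered} measurable functions (positive functions whose growth along $\hat f$-orbits is subexponential) rather than constants.

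First I would invoke Oseledets' multiplicative ergodic theorem for the cocycle $df$ over the invertible system $(\hat X,\hat f,\hat\nu)$. By Lemma \ref{lemma_nu_fibered} the measure $\nu$ is mixing, hence ergodic, and has Lyapunov exponents $L_v$ and $L_{\tilde\nu}$, both strictly positive with $L_{\tilde\nu}<\log d\le L_v$. Since $f$ is a skew product, the vertical line field $E_v=\langle(0,1)\rangle$ is $df$-invariant and is the Oseledets subspace with exponent $L_v$, while the remaining transverse Oseledets direction $E_h(\hat x)$ carries the exponent $L_{\tilde\nu}$; note that the restriction of $df_{(z,w)}$ to the horizontal factor is multiplication by $p'(z)$, and that $f^{-n}_{\hat x}$ lies over the base inverse branch of $p^n$, so that the horizontal contraction is driven by $(p^n)'$ and the vertical one by $(Q^n_z)'$. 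Oseledets' theorem also provides, for each $\eps>0$, a tempered function on $\hat X$ bounding the distortion of $df^n$ relative to the splitting $E_v\oplus E_h$.

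Next I would carry out the inverse-branch construction. By Lemma \ref{lemma_nu_fibered} the metric entropy of $\nu$ is strictly larger than $\log d$, so $\nu$ does not charge the postcritical set and hence $\nu(X)=1$ for $X=J_{\tilde H}\setminus\bigcup_{m\ge 0}f^{-m}(PC_f)$ as in the statement (see \cite{de2008exposants,dupont2012large}); in particular $f^{-n}_{\hat x}$ is a well-defined germ at $x_0$ for $\hat\nu$-a.e.\ $\hat x$ and every $n$. A Borel--Cantelli argument, using that the $\nu$-measure of an $\eta$-neighbourhood of $PC_f$ tends to $0$ as $\eta\to 0$, together with the subexponential backward contraction supplied by Oseledets --- which, because $\eps<L_{\tilde\nu}$, is a genuine contraction at rate $e^{-n(L_{\tilde\nu}-\eps)}$ --- then yields in the usual way tempered functions $r_\eps,L_\eps\colon \hat X\to\R^+$ such that $f^{-n}_{\hat x}$ extends univalently to $B(x_0,r_\eps(\hat x))$ with $\Lip(f^{-n}_{\hat x})\le L_\eps(\hat x)\,e^{-n(L_{\tilde\nu}-\eps)}$ there. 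This is exactly assertions \ref{item_ib_def} and \ref{item_ib_contraz}, the only change from the constant-Jacobian case being that the local bounds on the inverse differential come from tempered functions rather than uniform constants.

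Finally, the two derivative estimates follow from the Birkhoff ergodic theorem applied along the backward orbit. Since $df_{(z,w)}$ is lower triangular with diagonal entries $p'(z)$ and $q'_{z}(w)$, one has $\log|\jac\, df_y^n|=\sum_{j=0}^{n-1}\big(\log|p'(\pi_z f^j y)|+\log|q'_{\pi_z f^j y}(\pi_w f^j y)|\big)$ along the orbit segment $y,f(y),\dots,f^{n-1}(y)$; by ergodicity of $\hat\nu$ this Birkhoff average converges $\hat\nu$-a.e.\ to $\int\log|\jac\, df|\,d\nu=L_{\tilde\nu}+L_v$, and, after incorporating the Oseledets distortion, the rate of convergence along $\hat\nu$-a.e.\ orbit is governed by a tempered function $T_\eps$; this gives \ref{item_ib_sum}. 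In the same way $\|df_y^n\|$ is comparable, up to the distortion tempered function, to the growth along the dominating vertical direction, which is $e^{nL_v}$, giving \ref{item_ib_max}. The main point requiring care, and the reason the estimates carry the error term $\tfrac1n\log T_\eps(\hat x)$ rather than being uniform, is precisely the non-constancy of $\jac\, df$ and of the Oseledets distortion: this forces $r_\eps$, $L_\eps$ and $T_\eps$ to be merely tempered, and the only genuine obstacle is the bookkeeping needed to keep their tempered-ness consistent throughout the construction.
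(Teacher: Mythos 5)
Your overall strategy is sound, but it takes a different route from the paper: where you re-derive the lemma from first principles (Oseledets' theorem over $(\hat X,\hat f,\hat\nu)$, the Briend--Duval inverse-branch construction, a Borel--Cantelli argument, and Birkhoff averages for the Jacobian), the paper simply cites \cite[Theorem 1.4]{berteloot2008normalization} and \cite[Theorem A]{berteloot2019distortion}, observing that these results --- stated there for the measure of maximal entropy --- only need the strict positivity of the two Lyapunov exponents, which is supplied by Lemma \ref{lemma_nu_fibered}. Your route is essentially a sketch of the proof of those cited theorems, specialized to the skew-product setting. It buys transparency (one sees exactly how the splitting $E_v\oplus E_h$ and the exponents $L_{\tilde\nu}<L_v$ enter), but it is also the point where you are vaguest: items \ref{item_ib_sum}--\ref{item_ib_max} require that the Jacobian and norm estimates hold \emph{uniformly} for all $y$ in the pullback ball $f^{-n}_{\hat x}(B(x_0,r_\eps(\hat x)))$, not merely along the backward orbit of $\hat x$ itself, so Birkhoff alone does not suffice; one needs a tempered-distortion statement controlling the variation of $\log|\jac df|$ and $\log\|df\|$ over the shadowing tube, which is precisely the content of the normalization/distortion theorems the paper invokes. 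Your phrase ``after incorporating the Oseledets distortion, the rate of convergence ... is governed by a tempered function $T_\eps$'' is where that entire theorem is hidden; to make your argument self-contained you would have to develop this Pesin-type distortion estimate explicitly, at which point you are re-proving \cite[Theorem A]{berteloot2019distortion}. It is cleaner and safer to cite it, as the paper does.
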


\begin{proof}
The statement is a consequence of \cite[Theorem 1.4]{berteloot2008normalization}, see also
\cite[Theorem A]{berteloot2019distortion}. These results are stated for the measure of maximal entropy, but 
only the strict positivity of the Lyapunov exponents of the measure is needed, see the remark
at the end of the Introduction of \cite{berteloot2019distortion}.
\end{proof}

We fix $\eps\ll L_{\tilde \nu}$ in what follows and
set $\hat X_{C}:= \{\hat x \in \hat X \colon r_\eps^{-1}, L_\eps, T_\eps < C\}$.
We have $\hat \nu(\hat X_C) \to 1$ as $C\to \infty$.
Given a Borel subset $E \subset \C^2$, we set
\[
\hat E := \pi_0^{-1} (E \cap X),
 \quad
 \hat E_C := \hat E \cap X_C, 
 \quad
 \mbox{ and }
 \quad
 \nu_C = (\pi_0)_* (\hat \nu_{| \hat X_C}).
\]

Fix now a point $x \in X$, 
a constant $C$ sufficiently large (to be chosen later),
and a fibred box $x \in A \subset B(x, 1/(2C))$. 
We also fix 
 a
subset 
 $A_r:= \{y \in A,  \dist (y, A^c)>r\}$,
where the complement $A^c$ 
is taken in $J_{\tilde H}$.

We call \emph{good component of $f^{-n} (A)$}
any connected 
component with diameter smaller than
$r/2$. Since any  good component intersecting
$A_r$ is strictly  contained in $A$, to prove the lemma we 
need to show that (we can choose $A, C,r$ so that) 
for  $n$ sufficiently large,
there are at least $3d^n$ good components of $f^{-n} (A)$
intersecting $A_r$ and satisfying the estimates in \eqref{eq_est_cone}.

Notice that, for any $y \in \hat A_C$
the inverse branch $f_{\hat y}^{-n}$
is defined on $A$. 
Moreover, 
it follows from Lemma \ref{lemma-inv-branches}\eqref{item_ib_contraz}
that,
for all $n$ sufficiently large
all images of such inverse branches have diameter smaller than $r/2$
(uniformly in $\hat y$). Hence they are good components.

Since
$\hat \nu$
is mixing, 
we have $\hat \nu (\hat f^{-n} (E_1) \cap E_2)\to  \nu (E_1) \cdot \nu (E_2)$
for any Borel subsets $E_1, E_2\subseteq \hat X$. In particular, we
have, for all $n$ large enough,
\[
\nu ( \pi_0 (\hat f^{-n} (\hat A_r)_C  ) \cap A_r) 
 =
\hat \nu (\hat f^{-n} (\hat A_r)_C \cap \hat A_r)
 \geq  \frac{1}{2}  \hat \nu ((\hat A_r)_C) \cdot \hat \nu (\hat A_r) 
 = \frac{1}{2}   \nu_C ( A_r ) \cdot  \nu ( A_r).
\]

By the argument above, the LHS of the above expression is larger that $\nu (\cup_j A^j)$ where $A^j, 1\leq j \leq N$,
are the good components of $f^{-n} (A)$ intersecting $A_r$. To get the desired estimate on $N$, we need to find 
an upper bound for $\nu (A^j)$
(this bound is immediate when working with the measure of maximal entropy, since
this measure has constant Jacobian).
 We use here the definition of fibred box, letting $a$ be
the common $\mu_z$-measure of
all the non empty slices $A\cap ({z}\times \C)$.
We then have  $\nu(A)= a\tilde\nu (B)$
(where $B$ is the projection of $A$ on the first coordinate) and so $\nu (A^j)= a \tilde\nu (B^j) /d^n$, where
$B^j$ is the projection of $A^j$ on the first coordinate.

Since the measure $\tilde \nu$ is not-atomic, the function $M(r):= \sup_{z \in \C} \tilde \nu (B (z,r))$
goes to $0$ as $r\to 0$. Since the system
$(\tilde H,f)$ is hyperbolic,
the diameters of all the $B^j$ tend uniformly to zero
as $n\to \infty$. Hence, there exists a function $M'_n$ such that $\tilde \nu(B^j)\leq M'_n$ for all $j$ and
$M'_n \to 0$ as $n\to \infty$.
Take
$n$ large enough so that $M'_n < 1/6$. The above inequalities imply that $N> 3d^n$, as desired.
The estimates in \eqref{eq_est_cone} follow from items 
\eqref{item_ib_sum} and \eqref{item_ib_max} in Lemma \ref{lemma-inv-branches} (up to possibly increasing $n$).

\bibliographystyle{alpha}
\bibliography{biblio}

\newcommand{\etalchar}[1]{$^{#1}$}
\begin{thebibliography}{AGMV19}

\bibitem[AB18]{astorg2018bifurcations}
Matthieu Astorg and Fabrizio Bianchi.
\newblock Hyperbolicity and bifurcations in holomorphic families of polynomial
  skew products.
\newblock {\em arXiv preprint arXiv:1801.01460}, 2018.

\bibitem[ABD{\etalchar{+}}16]{astorg2014two}
Matthieu Astorg, Xavier Buff, Romain Dujardin, Han Peters, and Jasmin Raissy.
\newblock A two-dimensional polynomial mapping with a wandering {F}atou
  component.
\newblock {\em Ann. of Math.}, 184:263--313, 2016.

\bibitem[AGMV19]{astorg2019collet}
Matthieu Astorg, Thomas Gauthier, Nicolae Mihalache, and Gabriel Vigny.
\newblock Collet, {E}ckmann and the bifurcation measure.
\newblock {\em Inventiones mathematicae}, 217(3):749--797, 2019.

\bibitem[BB07]{bassanelli2007bifurcation}
Giovanni Bassanelli and Fran{\c{c}}ois Berteloot.
\newblock Bifurcation currents in holomorphic dynamics on.
\newblock {\em Journal f{\"u}r die reine und angewandte Mathematik (Crelle's
  Journal)}, 2007(608):201--235, 2007.

\bibitem[BB18]{berteloot2018hausdorff}
Fran{\c{c}}ois Berteloot and Fabrizio Bianchi.
\newblock Perturbations d'exemples de {L}att\`es et dimension de {H}ausdorff du
  lieu de bifurcation.
\newblock {\em Journal de Mathématiques Pures et Appliquées}, 116:161--173,
  2018.

\bibitem[BBD18]{bbd2015}
Fran{\c{c}}ois Berteloot, Fabrizio Bianchi, and Christophe Dupont.
\newblock Dynamical stability and {L}yapunov exponents for holomorphic
  endomorphisms of {$\mathbb{P}^k$}.
\newblock {\em Annales scientifiques de l'ENS}, 51(1):215--262, 2018.

\bibitem[BD99]{briend1999exposants}
Jean-Yves Briend and Julien Duval.
\newblock Exposants de {L}iapounoff et distribution des points p{\'e}riodiques
  d'un endomorphisme de {$\mathbb C \mathbb P^k$}.
\newblock {\em Acta mathematica}, 182(2):143--157, 1999.

\bibitem[BD19]{berteloot2019distortion}
Fran{\c{c}}ois Berteloot and Christophe Dupont.
\newblock A distortion theorem for iterated inverse branches of holomorphic
  endomorphisms of $\mathbb{P}^k$.
\newblock {\em Journal of the London Mathematical Society}, 99(1):153--172,
  2019.

\bibitem[BD20]{bd-eq-states}
Fabrizio Bianchi and Tien-Cuong Dinh.
\newblock Existence and properties of equilibrium states of holomorphic
  endomorphisms of {$\mathbb P^k$}.
\newblock {\em arXiv preprint arXiv:2007.04595}, 2020.

\bibitem[BDM08]{berteloot2008normalization}
Fran{\c{c}}ois Berteloot, Christophe Dupont, and Laura Molino.
\newblock Normalization of bundle holomorphic contractions and applications to
  dynamics (normalisation de contractions holomorphes fibr{\'e}es et
  applications en dynamique).
\newblock {\em Annales de l'institut Fourier}, 58(6):2137--2168, 2008.

\bibitem[BE09]{buff2009bifurcation}
Xavier Buff and Adam~L. Epstein.
\newblock Bifurcation measure and postcritically finite rational maps.
\newblock {\em Complex dynamics: families and friends}, pages 491--512, 2009.

\bibitem[Bia19]{b_misiurewicz}
Fabrizio Bianchi.
\newblock Misiurewicz parameters and dynamical stability of polynomial-like
  maps of large topological degree.
\newblock {\em Mathematische Annalen}, 373(3-4):901--928, 2019.

\bibitem[Bie19]{biebler2019lattes}
S{\'e}bastien Biebler.
\newblock Latt{\`e}s maps and the interior of the bifurcation locus.
\newblock {\em Journal of Modern Dynamics}, 15:95--130, 2019.

\bibitem[BK83]{brin1983local}
Michael Brin and Anatole Katok.
\newblock On local entropy.
\newblock In {\em Geometric dynamics}, pages 30--38. Springer, 1983.

\bibitem[BT17]{bt_desboves}
Fabrizio Bianchi and Johan Taflin.
\newblock Bifurcations in the elementary {D}esboves family.
\newblock {\em Proceedings of the AMS}, 145(10):4337--4343, 2017.

\bibitem[CFS12]{cornfeld2012ergodic}
Isaac~P. Cornfeld, Sergej~V. Fomin, and Yakov~Grigorevich Sinai.
\newblock {\em Ergodic theory}, volume 245.
\newblock Springer Science \& Business Media, 2012.

\bibitem[DF08]{dujardin2008distribution}
Romain Dujardin and Charles Favre.
\newblock Distribution of rational maps with a preperiodic critical point.
\newblock {\em American Journal of Mathematics}, 130(4):979--1032, 2008.

\bibitem[DS10]{dinh2010dynamics}
Tien-Cuong Dinh and Nessim Sibony.
\newblock Dynamics in several complex variables: endomorphisms of projective
  spaces and polynomial-like mappings.
\newblock In {\em Holomorphic dynamical systems}, pages 165--294. Springer,
  2010.

\bibitem[DT08]{de2008exposants}
Henry De~Th{\'e}lin.
\newblock Sur les exposants de {L}yapounov des applications m{\'e}romorphes.
\newblock {\em Inventiones mathematicae}, 172(1):89--116, 2008.

\bibitem[DT18]{dupont2018dynamics}
Christophe Dupont and Johan Taflin.
\newblock Dynamics of fibered endomorphisms of $\mathbb{P}^k$.
\newblock {\em to appear on Ann. Scuola Normale Sup. Pisa, arXiv preprint
  arXiv:1811.06909}, 2018.

\bibitem[Duj11]{dujardin2011bifurcation}
Romain Dujardin.
\newblock Bifurcation currents and equidistribution on parameter space.
\newblock {\em Frontiers in Complex Dynamics}, 2011.

\bibitem[Duj16]{dujardin2016nonlaminar}
Romain Dujardin.
\newblock A non-laminar dynamical green current.
\newblock {\em Mathematische Annalen}, 365(1-2):77--91, 2016.

\bibitem[Duj17]{dujardin2016non}
Romain Dujardin.
\newblock Non density of stability for holomorphic mappings on $\mathbb{P}^k$.
\newblock {\em Journal de l'{\'E}cole {P}olytechnique}, 4:813--843, 2017.

\bibitem[Dup12]{dupont2012large}
Christophe Dupont.
\newblock Large entropy measures for endomorphisms of
  {$\mathbb{C}\mathbb{P}^k$}.
\newblock {\em Israel Journal of Mathematics}, 192(2):505--533, 2012.

\bibitem[Gau12]{gauthier2012strong}
Thomas Gauthier.
\newblock Strong bifurcation loci of full {H}ausdorff dimension.
\newblock {\em Ann. Sci. {\'E}c. Norm. Sup.(4)}, 45(6):947--984, 2012.

\bibitem[Gau16]{gauthier2016equidistribution}
Thomas Gauthier.
\newblock Equidistribution towards the bifurcation current i: Multipliers and
  degree d polynomials.
\newblock {\em Mathematische Annalen}, 366(1-2):1--30, 2016.

\bibitem[Gor16]{gorbovickis2015}
Igors Gorbovickis.
\newblock Algebraic independence of multipliers of periodic orbits in the space
  of polynomial maps of one variable.
\newblock {\em Ergodic Theory Dynam. Systems}, 36(4):1156--1166, 2016.

\bibitem[GR12]{grauert2012coherent}
Hans Grauert and Reinhold Remmert.
\newblock {\em Coherent analytic sheaves}, volume 265.
\newblock Springer Science \& Business Media, 2012.

\bibitem[Jon99]{jonsson1999dynamics}
Mattias Jonsson.
\newblock Dynamics of polynomial skew products on $\mathbb{C}^2$.
\newblock {\em Mathematische Annalen}, 314(3):403--447, 1999.

\bibitem[McM00]{mcmullen2000mandelbrot}
Curtis McMullen.
\newblock The {M}andelbrot set is universal.
\newblock In Tan Lei, editor, {\em The {M}andelbrot set, themes and
  variations}, pages 1--18. Cambridge University Press, 2000.

\bibitem[Pha05]{pham}
Ngoc-Mai Pham.
\newblock Lyapunov exponents and bifurcation current for polynomial-like maps.
\newblock {\em ArXiv preprint math/0512557}, 2005.

\bibitem[Prz85]{przytycki1985hausdorff}
Feliks Przytycki.
\newblock Hausdorff dimension of harmonic measure on the boundary of an
  attractive basin for a holomorphic map.
\newblock {\em Inventiones mathematicae}, 80(1):161--179, 1985.

\bibitem[Prz18]{przytycki2018thermodynamic}
Feliks Przytycki.
\newblock Thermodynamic formalism methods in one-dimensional real and complex
  dynamics.
\newblock In {\em Proceedings of International Congress of Mathematicians},
  pages 2081--2106. World Scientific, 2018.

\bibitem[PU10]{przytycki2010conformal}
Feliks Przytycki and Mariusz Urba{\'n}ski.
\newblock {\em Conformal fractals: ergodic theory methods}, volume 371.
\newblock Cambridge University Press, 2010.

\bibitem[PZ20]{przytycki2020hausdorff}
Feliks Przytycki and Anna Zdunik.
\newblock On hausdorff dimension of polynomial not totally disconnected julia
  sets.
\newblock {\em arXiv preprint arXiv:2003.12612}, 2020.

\bibitem[Taf17]{taflin_blender}
Johan Taflin.
\newblock Blenders near polynomial product maps of {$\mathbb{C}^2$}.
\newblock {\em to appear on the Journal of the European Mathematical Society,
  arXiv preprint arXiv:1702.02115}, 2017.

\bibitem[UZ13]{urbanski2013equilibrium}
Mariusz Urba{\'n}ski and Anna Zdunik.
\newblock Equilibrium measures for holomorphic endomorphisms of complex
  projective spaces.
\newblock {\em Fundamenta Mathematicae}, 1(220):23--69, 2013.

\bibitem[Zdu90]{zdunik1990parabolic}
Anna Zdunik.
\newblock Parabolic orbifolds and the dimension of the maximal measure for
  rational maps.
\newblock {\em Inventiones mathematicae}, 99(1):627--649, 1990.

\end{thebibliography}

\end{document}